\definecolor{navyblue}{rgb}{0.0, 0.0, 0.5}
\theoremstyle{definition}
\newtheorem{thm}{Theorem}[section]
\newtheorem*{thm*}{Theorem}
\newtheorem{lemma}[thm]{Lemma}
\newtheorem{defn}[thm]{Definition}
\newtheorem{claim}[thm]{Claim}
\newtheorem{prop}[thm]{Proposition}
\newtheorem{cor}[thm]{Corollary}
\newtheorem{remark}[thm]{Remark}
\newtheorem{fact}[thm]{Fact}
\newtheorem{question}[thm]{Question}
\newtheorem{ex}[thm]{Example}
\newtheorem{obs}[thm]{Observation}
\renewcommand{\subset}{\subseteq}
\renewcommand{\supset}{\supseteq}
\newcommand\force{\Vdash}
\newcommand\N{\mathbb{N}}
\newcommand\Z{\mathbb{Z}}
\newcommand\Q{\mathbb{Q}}
\newcommand\R{\mathbb{R}}
\newcommand\GL{\mathrm{GL}}
\newcommand\analytic{\boldsymbol{\Sigma}^1_1}
\DeclareMathOperator{\spann}{span}
\DeclareMathOperator{\Ar}{Ar}
\DeclareMathOperator{\LO}{LO}
\DeclareMathOperator{\Inn}{Inn}
\DeclareMathOperator{\tc}{tc}
\DeclareMathOperator{\Hom}{Hom}
\DeclareMathOperator{\Homeo}{Homeo}
\DeclareMathOperator{\Aut}{Aut}
\DeclareMathOperator{\dom}{dom}
\newcommand{\set}[2]{ \left\{ #1 :\, #2 \right\} }
\newcommand{\seqq}[2]{ \left\langle #1 :\, #2\right\rangle }
\newcommand{\embeds}{\sqsubseteq}
\begin{document}

\title[]{Anti-classification results for groups acting freely on the line}
\date{\today}
\keywords{Borel equivalence relations,
Borel reducibility,
Archimedean ordered groups,
choiceless models,
circularly ordered groups,
o-minimality}

\author[F.~Calderoni]{Filippo Calderoni}
\address{Department of Mathematics, Rutgers University, 
Hill Center for the Mathematical Sciences,
110 Frelinghuysen Rd.,
Piscataway, NJ 08854-8019}
\email{filippo.calderoni@rutgers.edu}

\author[D.~Marker]{David Marker}
\address{Department of Mathematics, Statistics, and Computer Science, University
of Illinois at Chicago, Chicago IL 60607, USA}
\email{marker@uic.edu}

\author[L.~Motto~Ros]{Luca Motto~Ros}
\address{Dipartimento di matematica \guillemotleft Giuseppe Peano\guillemotright, Universit\`a di Torino, 10121 Torino, Italy}
\email{luca.mottoros@unito.it}

\author[A.~Shani]{Assaf Shani}
\address{Department of Mathematics, Harvard University, Cambridge, MA 02138, USA}
\email{shani@math.harvard.edu}

 \subjclass[2020]{Primary: 03E15, 03E30, 03E75, 03C15, 03C64, 06F15, 06F20, 20F60, 54H05.}
\thanks{The third author is supported by the project PRIN~2017 ``Mathematical Logic: models, sets, computability'', prot. 2017NWTM8R}

%{\color{blue} Note: this is an updated file sent by Luca, containing structural changes to reflect also the stuff about embedability and some corrections. Again updated by Luca October 12. Let us know what you think when you get a chance to look at it}
\maketitle

\begin{abstract}

We explore countable ordered Archimedean groups from the point of view of descriptive set theory. We introduce the space of Archimedean left-orderings $\Ar(G)$ for a given countable group \(G\), and prove that the
equivalence relation induced by the natural action of $\GL_2(\Q)$ on  $\Ar(\Q^2)$ is not concretely classifiable.
Then we analyze the isomorphism relation for countable ordered Archimedean groups, and pin its complexity in terms of the hierarchy of Hjorth, Kechris and Louveau \cite{HjoKecLou}. In particular, we show that its potential class is not $\boldsymbol{\Pi}^0_3$. This topological constraint prevents classifying Archimedean groups using countable subsets of reals. We obtain analogous results for the bi-embeddability relation, and we consider similar problems for circularly ordered groups, and o-minimal structures such as ordered divisible Abelian groups, and real closed fields.
Our proofs combine classical results on Archimedean groups, the theory of Borel equivalence relations, and analyzing 
definable sets in the basic Cohen model and other models of Zermelo-Fraenkel set theory without choice.
\end{abstract}

%%%%%%%%%%----------Section 1----------%%%%%%%%%%

\section{Introduction}

A group \(G\) is left-orderable if it admits a strict total order that is invariant under left multiplication.
If \(G\) is a countable left-orderable group, it is not hard to produce a faithful action of \(G\) on the real line by order preserving homeomorphism associated to any left-ordering on \(G\). This action is called dynamical realization.

Turning orders into actions gives rise to a beautiful interplay between algebraic properties and dynamical structure.  This correspondence
dates back as early as the work by H\"older in 1901, when he proved that any group acting freely on \(\R\) is Abelian. (See Ghys~\cite[Theorem~6.10]{Ghy}.)
Besides its classical relevance, the dynamical approach has been proven useful in numerous situations. For example, in the context of the fundamental groups of \(3\)-manifolds, orderability has crucial implications in the theory of foliations and laminations as showed by Calegari and Dunfield~\cite{CalDun}. More recently, Mann and Rivas~\cite{ManRiv} established a characterization of certain left-orders in terms of rigidity phenomena in the moduli space \(\Hom(G, \Homeo_{+}(\R))\).

H\"older was particularly interested in left-ordered groups having the Archimedean type property. Recall that an ordered group \((G,<)\) is \emph{Archimedean} if for all \(x,y \in G\) there is \(n\in \Z\) such that \(x <  y^n\). Archimedean groups can be characterized in terms of their dynamical realizations:
Archimedean orders correspond to free actions on the line.
(An account of these results can be found in the work of Navas~\cite[Section~3]{Nav10}.)

In this paper we study Archimedean ordered groups from the point of view of descriptive set theory. Mainly, we analyze the possibility of classifying countable Archimedean groups completely up to isomorphism.

To explain our approach we shall present briefly the main ideas behind Borel classification theory.
Given a class of mathematical structures \(\mathcal{X}\) with a corresponding notion of isomorphism \(\cong_\mathcal{X}\), a \emph{complete classification} for \(\mathcal{X}\) is an assignment \(c\colon \mathcal{X}\to I\) such that for any \(x,y  \in \mathcal{X}\) we have
\[
x\cong_\mathcal{X} y \iff c(x) = c(y).
\]
In many well-understood situations we can parametrize the elements of \(\mathcal{X}\) as the objects of a standard Borel space\footnote{By a classical theorem of Kuratowski every uncountable standard Borel space is Borel isomorphic to \(\R\).} and we can regard \(\cong_\mathcal{X}\) as a Borel set. More precisely, we say that an equivalence relation on a standard Borel space \( X \) is \emph{Borel} if it is a Borel subset of the product space \(X\times X\). As explained in the excellent survey of Foreman~\cite{For18}, for those equivalence relations that are Borel, we can determine whether two elements are equivalent with a countable amount of data. On the other hand, when we prove that a certain equivalence relation is not Borel, we can take it as strong evidence against a satisfactory classification theory.

The ``non-Borel test'' produced anticlassification results in  many notable situations such as \cite{Hjo02,DowMon} for the isomorphism relation on torsion-free Abelian groups, and in~\cite{ForRudWei} for the isomorphism relation on the space of measure preserving transformation of the unit interval.
However, the isomorphism relation on countable Archimedean groups is easily seen to be Borel (see Section 3), and this leaves a door open to find optimal complete invariants. 

The standard notion for comparing the complexity of different equivalence relation is \emph{Borel reducibility}. If \(E,F\) are equivalence relations  (or, more generally, quasi-orders) on the standard Borel spaces \(X\) and \(Y\) respectively, we say that \(E\) is \emph{Borel reducible} to \(F\) (in symbols, \(E\leq_B F\)) if there is a Borel map \(f\colon X \to Y\) such that \(x_1\mathbin{E} x_2 \iff f(x_1)\mathbin{F} f(x_2)\). Whenever \(E \leq_B F\), then \(F\)-classes can be regarded as complete invariants for \(E\).
Moreover, any definable complete classification for \(F\) translates into a definable complete classification for \(E\). The notion of Borel reducibility has been used successfully to analyze invariants for several classification problems across mathematics. Some remarkable examples are included in~\cite{AdaKec,ForWei,Sab16,Tho03}.

As a consequence of a classical dichotomy by Silver~\cite{Sil80} the collection of Borel equivalence relations with uncountably many classes has a minimum element, the identity relation on real numbers, denoted by \(=_\R\). A Borel equivalence relation is called \emph{concretely classifiable} (or \emph{smooth}) if it is Borel reducible to \(=_\R\).
Moreover, Harrington, Kechris and Louveau~\cite{HarKecLou} showed that the equivalence relation of eventual equality \(E_0\) on \(2^\N\), the Cantor space of binary sequences equipped with the product topology, is an immediate successor of \(=_\R\).

We observe in Section~\ref{sec : Ar} that the isomorphism relation \(\cong_\mathsf{ArGp}\) on countable ordered Archimedean groups is not Borel reducible to \(=_\R\). In fact, let
 \(\cong_{\Ar(\Q^2)}\) be the order preserving isomorphism relation on the space of isomorphic copies of \(\Q^2\) with all possible Archimedean orders. 
% In the discussion contained in Section~\ref{sec : Ar} we prove the following:

\begin{thm}[Section~\ref{sec : Ar}]
\label{thm : Ar(Q2)}
There is a Borel reduction from \(E_0\) to \(\cong_{\Ar{(\Q^2)}}\). In particular, \(\cong_{\Ar(\Q^2)}\) is not concretely classifiable.
\end{thm}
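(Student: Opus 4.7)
The plan is to identify \(\cong_{\Ar(\Q^2)}\) with the orbit equivalence relation of a natural countable Borel action of \(\GL_2(\Q)\) on the irrationals, show that this relation is not smooth, and then appeal to the Harrington--Kechris--Louveau \(E_0\)-dichotomy.

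First I would parametrize \(\Ar(\Q^2)\). By H\"older's theorem every Archimedean order on \(\Q^2\) arises from an order-preserving \(\Q\)-linear embedding \(\phi \colon \Q^2 \hookrightarrow \R\), unique up to multiplication by a positive real. Normalizing \(\phi(1,0)=1\) forces \(\phi(0,1) = \alpha\) with \(\alpha \in \R \setminus \Q\) (irrationality being necessary for \(\phi\) to be injective), and conversely every such \(\alpha\) yields the positive cone \(P_\alpha = \{(p,q) \in \Q^2 : p + q\alpha > 0\}\). This produces a Borel bijection between \(\Ar(\Q^2)\) and \(\R \setminus \Q\), up to a finite ambiguity coming from orientation. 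A direct linear-algebra computation then shows that \(M = \begin{pmatrix} a & b \\ c & d \end{pmatrix} \in \GL_2(\Q)\) carries \(P_\alpha\) onto \(P_\beta\) exactly when \(\alpha = (b + d\beta)/(a + c\beta)\) and \(a + c\beta > 0\). Hence under this parametrization \(\cong_{\Ar(\Q^2)}\) becomes the orbit equivalence relation of a M\"obius-type action of \(\GL_2(\Q)\) on \(\R \setminus \Q\); because \(\GL_2(\Q)\) is countable, this is a countable Borel equivalence relation.

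Next I would show that the relation is not smooth. The upper-triangular matrices \(\begin{pmatrix} 1 & q \\ 0 & 1 \end{pmatrix}\) act as \(\beta \mapsto \beta + q\), so the \(\GL_2(\Q)\)-action contains translation by \(\Q\). Translation by \(\Q\) is classically ergodic with respect to Lebesgue measure (any \(\Q\)-invariant measurable set has density one at each of its Lebesgue density points, hence is null or conull), so the larger \(\GL_2(\Q)\)-action is ergodic as well. Since every orbit is countable, and hence Lebesgue-null, ergodicity forbids the existence of a Borel transversal, and \(\cong_{\Ar(\Q^2)}\) cannot be smooth. The \(E_0\)-dichotomy of Harrington, Kechris and Louveau then yields \(E_0 \leq_B \cong_{\Ar(\Q^2)}\).

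The main point requiring care is the identification in the first step: correctly translating the algebraic data of an Archimedean order on \(\Q^2\) into the real parameter \(\alpha\), tracking the sign constraint \(a + c\beta > 0\), and handling the orientation ambiguity. Once the isomorphism relation has been recognized as a M\"obius-type action of a countable group on the irrationals, the ergodicity argument and the appeal to the dichotomy are essentially routine.
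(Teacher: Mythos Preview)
Your proposal is correct and reaches the same conclusion by a related but genuinely different route. The paper works with the $\GL_2(\Z)$ M\"obius action on $\R\setminus\Q$, quotes the known fact that its orbit relation is Borel bi-reducible with $E_0$, and then builds a countable-to-one Borel homomorphism $\alpha\mapsto\spann_\Q\{1,\alpha\}$ into $\cong_{\Ar(\Q^2)}$; the argument finishes by observing that a reduction of $\cong_{\Ar(\Q^2)}$ to $=_\R$ would compose to a weak Borel reduction of $E_0$ to $=_\R$, contradicting the generic ergodicity of $E_0$. You instead identify $\cong_{\Ar(\Q^2)}$ \emph{directly} with the orbit relation of the full $\GL_2(\Q)$ M\"obius action on the irrationals (this is implicit in the paper's remark that $\cong_{\Ar(\Q^n)}$ is the $\GL_n(\Q)$-action), and establish non-smoothness via the Lebesgue ergodicity of the rational-translation subaction rather than via category. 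Your route is more self-contained---it needs neither the $\GL_2(\Z)\sim E_0$ fact nor the machinery of weak Borel reductions---while the paper's route has the merit of pinning down the connection to $E_0$ through a concrete homomorphism before invoking the dichotomy.
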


This is an anti-classification theorem in the sense that it excludes the possibility of classifying countable Archimedean groups completely using numerical invariants, and it is also
a contribution to the analysis and classification of the left-orders on a given countable group~\cite{CC22,Cla12,Lin11,Riv12,Sik04}.
%\cite{Teh,Min72,Sik04}. 
(In Section~\ref{sec : Ar}  we will point out that
Baer's analysis \cite{Bae} of torsion-free Abelian group of rank \(1\) also implies that isomorphism on countable Archimedean groups is strictly more complicated than \(=_\mathbb{R}\).)

Beyond concrete classification, one can try to assign invariants which are countable sets of reals.
For example, using the Halmos-von Neumann theory, Foreman and Louveau showed that countable sets of reals provide a successful classification of the conjugacy relation on ergodic discrete spectrum transformations (see~\cite[Section~5.2]{For}).

A natural way to capture this is by comparing our equivalence relation with the Friedman-Stanley jumps, defined as follows \cite{FriSta}.
The \emph{first Friedman-Stanley jump} is the equivalence relation $=_\R^+$ defined on the space $\mathbb{R}^{\mathbb{N}}$ by identifying two countable sequences of reals if and only if they enumerate the same set of reals. The equivalence relation $=^+_\R$ admits the natural complete classification sending $(x_n)_{n\in \N}\in\mathbb{R}^{\mathbb{N}}$ to the set of reals it enumerates, $\set{x_n}{n\in\mathbb{N}}$, so that the invariants are precisely the countable sets of reals.
A Borel equivalence relation $E$ is considered classifiable by countable sets of reals if it is Borel reducible to $=^+_\R$.

The \emph{second Friedman-Stanley jump} $=^{++}_\R$ is defined to naturally admit complete invariants which are countable sets of countable sets of reals. Similarly, for each countable ordinal $\alpha$, Friedman and Stanley defined an equivalence relation $=_\R^{\alpha+}$ whose natural complete invariants are precisely the hereditarily countable sets in $\mathcal{P}^{1+\alpha}(\mathbb{N})$, the $1+\alpha$-th iterated power set of $\mathbb{N}$.

This \textit{Friedman-Stanley hierarchy} was further refined by Hjorth, Kechris and Louveau \cite{HjoKecLou}, where they introduced equivalence relations $\cong^\ast_{\alpha+2,\beta}$ whose complexity lies between $=_\R^{\alpha+}$ and $=_\R^{(\alpha+1)+}$.
For example, they introduced two equivalence relations $\cong^\ast_{3,0}$ and $\cong^\ast_{3,1}$ such that \begin{equation*}
    {=^+_\R} <_B {\cong^\ast_{3,0}} <_B {\cong^\ast_{3,1}} <_B {=^{++}_\R}.
\end{equation*}
Moreover, it turns out~\cite{HjoKecLou} that there is a clear correspondence between the topological complexity of isomorphism relations and their possible invariants.
For example, for any isomorphism relation $E$, if $E$ is $\mathbf{\Pi}^0_3$ then it is Borel reducible to $=^+_\R$, and if $E$ is $\mathbf{\Sigma}^0_4$ then it is Borel reducible to $\cong^\ast_{3,1}$.

We show that $=^+_\R$ is Borel reducible to $\cong_\mathsf{ArGp}$ (Proposition~\ref{prop : lowbnd}), and that $\cong_\mathsf{ArGp}$ is $\mathbf{\Sigma}^0_4$, and therefore is Borel reducible to $\cong^\ast_{3,1}$ (Proposition~\ref{prop : reduction to cong_3,1}).
Next, in Section~\ref{sec : irred to 3,0} we show that \(\cong_\mathsf{ArGp}\) is genuinely more complicated than \(=^+_\R\) in a very strong sense:

\begin{thm}[{Section~\ref{sec : irred to 3,0}}]
\label{thm : main}
The isomorphism relation \(\cong_\mathsf{ArGp}\) is not Borel reducible to \(\cong_{3,0}^*\).
Consequently, countable sets of reals cannot be used to completely classify countable Archimedean groups up to isomorphism.
\end{thm}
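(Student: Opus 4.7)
The plan is to argue by contradiction, following the choiceless-models technique announced in the abstract: assume $f \colon \mathsf{ArGp} \to Y$ is a Borel reduction witnessing $\cong_\mathsf{ArGp} \leq_B \cong^\ast_{3,0}$, and exhibit a generic Archimedean group $G$ whose $\cong_\mathsf{ArGp}$-class cannot be coded by any $\cong^\ast_{3,0}$-invariant inside a suitable symmetric extension of the universe.

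By H\"older's theorem, every countable Archimedean ordered group embeds into $(\R,+,<)$ uniquely up to multiplication by a positive real. Hence, after distinguishing a positive element, $\cong_\mathsf{ArGp}$ is the orbit equivalence relation of the multiplicative action of $\R_{>0}$ on the standard Borel space of countable subgroups of $\R$ containing $1$. The construction I would use is to add $\omega$ mutually Cohen-generic reals $\seqq{c_n}{n<\omega}$ over the ground model and let $G$ be the $\Q$-vector subspace of $\R$ spanned by $\{1\}\cup\{c_n:n<\omega\}$, so that $G$ is Archimedean of infinite $\Q$-rank, and its $\cong_\mathsf{ArGp}$-class in the generic extension coincides with the $\R_{>0}$-orbit $\{\lambda G:\lambda>0\}$.

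Since $f$ is Borel, hence absolute, it assigns to $G$ a canonical invariant $f(G)\in Y$. Working inside the basic Cohen model, or a variant tailored to this setting, the only hereditarily ordinal-definable information about the sequence $\seqq{c_n}{n<\omega}$ is the unordered set $\{c_n:n<\omega\}$, together with data symmetric under finite permutations and positive scaling. The argument then splits into two parts: first, identifying a variant $G'$ (for instance, obtained by adjoining one more Cohen real or by re-indexing) that is not an $\R_{>0}$-scalar of $G$ but is indistinguishable from $G$ in the symmetric world; second, invoking the structure theorem for $\cong^\ast_{3,0}$-invariants from \cite{HjoKecLou}, which forces $f(G)$ to be coded by a hereditarily countable object of a specific restricted form (essentially a countable set of reals equipped with a canonical rigidity), and showing that such a coding in the symmetric model must collapse $f(G)$ and $f(G')$, producing the contradiction $G\cong G'$.

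The main obstacle is the second part: precisely identifying, inside the chosen symmetric model, the class of hereditarily symmetric objects that can appear as a $\cong^\ast_{3,0}$-invariant, and verifying that this class is too coarse to separate $G$ from $G'$. This requires a careful analysis of the HjoKecLou hierarchy at the level just above $=^+_\R$ (the analog of \emph{grainy sets} developed in \cite{HjoKecLou}) and tuning the symmetric model so that (i) the relevant generic Archimedean groups and their non-scalar variants are simultaneously symmetric, while (ii) any Borel-assigned $\cong^\ast_{3,0}$-invariant is forced to coincide on them. A secondary issue is to ensure that the generic group $G$ genuinely witnesses the Archimedean property, which follows from the $\Q$-linear independence of the Cohen reals together with $1$.
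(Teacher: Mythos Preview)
Your overall framework is right---this is a symmetric-model argument via Shani's technique---but the specific generic group you propose is precisely the one the paper singles out as \emph{insufficient}. If $G=\spann_\Q(\{1\}\cup\{c_n:n<\omega\})$, then for any nonzero $a,b\in G$ one has either $G/a=G/b$ or $G/a\cap G/b=\Q$; so the canonical invariant $A_G=\{G/r:r\in G\setminus\{0\}\}$ is, up to the fixed set $\Q$, a family of pairwise disjoint sets of reals. Such a family can be definably coded as a single set of reals, hence $V(A_G)=V(B)$ for some set of reals $B$, and the group does not even witness irreducibility to $=^+_\R$, let alone to $\cong^\ast_{3,0}$. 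Your suggested perturbations (adding one more Cohen real, or re-indexing) do not help: re-indexing leaves the span unchanged, and the relevant automorphisms must preserve the invariant $A_G$, which means they should be of the form $G\mapsto G/r$ for $r\in G$, not permutations of the generators.

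The paper's fix is to force, \emph{over} the basic Cohen model $V(A^1)$, a generic additive group $G$ lying strictly between the $\Q$-span $H$ and the field $K=\Q(A^1)$, using finite conditions that decide membership in $G$ of monomials $a_1^{l_1}\cdots a_k^{l_k}$ with bounded exponents. This forcing is engineered so that (i) it adds no new reals over $V(A^1)$, (ii) for each $a\in A^1$ there is an automorphism $\delta_a$ sending $G$ to $G/a$ and hence fixing $A_G$, and (iii) these automorphisms occur densely. The contradiction then comes not from two indistinguishable groups $G,G'$, but from the observation that if $G$ were definable in $V(A_G)$ from finitely many members of a countable set $B$ of sets of reals, then the infinitely many $d\in A^1$ for which $\delta_d$ moves $G$ would yield an injective $\omega$-sequence in $A^1$---impossible, since $A^1$ is Dedekind-finite in $V(A^1)$. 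Your proposal is missing both the correct choice of $G$ and this Dedekind-finiteness endgame.
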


Combining all previous results we get
\[
{=^+_\R} <_B {\cong_\mathsf{ArGp}} <_B {=^{++}_\R}.
\]
This is a curious phenomenon also from the viewpoint of the theory of Borel reducibility. To the best of our knowledge it is the first example of a natural isomorphism relation lying strictly in between two consecutive Friedman-Stanley jumps of \(=_\R\).

The proof of Theorem~\ref{thm : main} goes through the study of definable sets in models of \(\mathsf{ZF}\), the standard axioms of mathematics without the axiom of choice, and the technique of \emph{forcing}, which was developed by Cohen in 1963 to solve the long-standing Hilbert's first problem, also known as the \emph{Continuum Hypothesis}.
More specifically, working in a forcing extension of the so called ``basic Cohen model'', we construct a generic invariant for Archimedean groups which is complex enough so that it cannot be coded by any invariant for \(\cong^*_{3,0}\).
This is a striking application of the techniques recently developed  by Shani~\cite{Sha}.

Our arguments are rather flexible and can also be used to attack similar problems concerning the (bi-)embeddability relation on countable Archimedean groups using the natural generalization of Borel reducibility to quasi-orders.

\begin{thm}[{Section~\ref{sec : embeddability on ArGp}}]
\label{thm : main-embed}
Let \( \embeds_\mathsf{ArGp} \) be the embeddability relation on countable Archimedean groups, and let \( \equiv_\mathsf{ArGp} \) be the associated bi-embeddability relation. Then
\[
{=_\R^{\mathrm{cf}}} <_B {\embeds_\mathsf{ArGp}} <_B {=_\R^{2\text{-}\mathrm{cf}}}
\]
and
\[
{=_\R^+} = E_{=_\R^\mathrm{cf}} <_B {\equiv_\mathsf{ArGp}} <_B E_{=_\R^{2\text{-}\mathrm{cf}}}.
\] 
In particular, countable sets of reals cannot be used to completely classify countable Archimedean groups up to bi-embeddability.
%Let \(\equiv_\mathsf{ArGp}\) be the bi-embeddability relation on countable Archimedean groups. Then \({=_{\R}^+} \leq_B {\equiv_\mathsf{ArGp}}\) but \(\equiv_\mathsf{ArGp}\) is not Borel reducible to \(\cong^*_{3,0}\), so that the previous inequality is strict.
\end{thm}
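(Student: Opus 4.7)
The proof unfolds in three stages. First, I would construct an explicit Borel map $A \mapsto G_A$ witnessing the lower reductions $=_\R^\mathrm{cf} \leq_B \embeds_\mathsf{ArGp}$ and $=_\R^+ \leq_B \equiv_\mathsf{ArGp}$. Second, I would use H\"older's theorem to obtain the upper reductions $\embeds_\mathsf{ArGp} \leq_B {=_\R^{2\text{-}\mathrm{cf}}}$ and $\equiv_\mathsf{ArGp} \leq_B E_{=_\R^{2\text{-}\mathrm{cf}}}$. Finally, I would prove the four strictness statements by transplanting the basic-Cohen-model arguments behind Theorem~\ref{thm : main}.

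For the lower reductions, fix a sequence $(r_n)_{n \in \N}$ of algebraically independent real numbers and for each $A \subseteq \N$ define
\[
G_A = \Z + \sum_{n \in A} r_n \Z \subseteq (\R,+),
\]
with its inherited order. By H\"older's theorem every order embedding $G_A \to G_B$ is multiplication by some $\alpha > 0$ satisfying $\alpha G_A \subseteq G_B$. Writing $\alpha \in G_B$ and expanding each $\alpha r_m$ for $m \in A$, algebraic independence kills the $r_j$-coefficients of $\alpha$, so $\alpha \in \Z^{>0}$ and then $m \in B$ for every $m \in A$. Hence $G_A \embeds G_B \iff A \subseteq B$ and $G_A \equiv G_B \iff A = B$, as required. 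For the upper reductions, to each countable Archimedean group $G$ associate the countable family
\[
\mathcal{S}(G) = \{\iota_{g_0}(G) : g_0 \in G,\, g_0 > 0\}
\]
of subgroups of $\R$, where $\iota_{g_0}: G \to \R$ is the unique order embedding with $\iota_{g_0}(g_0) = 1$ supplied by H\"older. Given any order embedding $\phi: G \to H$, taking $g_0 \in G^{>0}$ and $h_0 = \phi(g_0)$ yields $\iota_{g_0}(G) = \iota_{h_0}(\phi(G)) \subseteq \iota_{h_0}(H)$, and conversely any such inclusion produces an embedding. Thus $G \embeds H$ iff some element of $\mathcal{S}(G)$ is contained in some element of $\mathcal{S}(H)$, so the Borel map $G \mapsto \mathcal{S}(G)$ reduces $\embeds_\mathsf{ArGp}$ to $=_\R^{2\text{-}\mathrm{cf}}$ and $\equiv_\mathsf{ArGp}$ to $E_{=_\R^{2\text{-}\mathrm{cf}}}$.

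To obtain $=_\R^+ <_B \equiv_\mathsf{ArGp}$ (and hence $=_\R^\mathrm{cf} <_B \embeds_\mathsf{ArGp}$) I would revisit the Shani-style generic-invariant construction of~\cite{Sha} that drives Theorem~\ref{thm : main}, this time producing a generic Archimedean group whose bi-embeddability class fails to be coded by any countable set of reals; the point is that positive-scalar re-parametrizations of a subgroup of $(\R,+)$ preserve enough of the local combinatorial data driving the Shani argument for the non-classifiability to survive under the coarser relation $\equiv_\mathsf{ArGp}$. Dually, for $\embeds_\mathsf{ArGp} <_B {=_\R^{2\text{-}\mathrm{cf}}}$ I would perform a potential-class analysis: each $\mathcal{S}(G)$ is not an arbitrary countable set of countable subsets of $\R$ but is closed under the highly rigid scalar action of $\R^{>0}$, and this rigidity should cap the complexity of $\embeds_\mathsf{ArGp}$ strictly below that of $=_\R^{2\text{-}\mathrm{cf}}$. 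The main obstacle is the bi-embeddability variant of the first strictness: one must verify, within the basic Cohen model, that no analytic-in-the-parameters coding of the $\equiv_\mathsf{ArGp}$-class of the generic group exists, which requires carefully transplanting the rigidity core of the proof of Theorem~\ref{thm : main} from isomorphism to the coarser bi-embeddability relation.
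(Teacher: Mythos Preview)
Your lower-bound construction does not reduce $=_\R^{\mathrm{cf}}$: it only reduces set-inclusion on $\mathcal{P}(\N)$, whose induced equivalence relation is the smooth relation $=_\R$, not $=_\R^+$. Because you index your groups by subsets $A\subseteq\N$ using a fixed \emph{countable} family $(r_n)_{n\in\N}$, you cannot separate continuum-many pairwise $\subseteq$-incomparable countable sets of reals, and your argument yields only $=_\R \leq_B {\equiv_\mathsf{ArGp}}$. The paper's fix (Proposition~\ref{prop : incomparable}, via Proposition~\ref{prop : fields}) is to take a \emph{perfect} set $T\subseteq\R$ of algebraically independent reals and a Borel bijection $f\colon\R\to T$, and send $(x_n)_{n\in\N}\in\R^\N$ to the \emph{field} $\Q(\{f(x_n):n\in\N\})$. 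Subfields of $\R$ are order-rigid: by Hion's lemma any order-embedding between them is multiplication by an element of the target field, hence is an inclusion, so $\{x_n\}\subseteq\{y_n\}$ if and only if $\Q(f(\{x_n\}))\embeds\Q(f(\{y_n\}))$.

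Your upper bound is essentially the paper's invariant $A_G$, but note that the condition you state, ``some element of $\mathcal{S}(G)$ is contained in some element of $\mathcal{S}(H)$'', is not the $=_\R^{2\text{-}\mathrm{cf}}$ relation, which demands that \emph{every} element of the first family lie inside some element of the second. For these particular families the two conditions coincide (Lemma~\ref{lem : emb}), but this needs to be checked before the map is declared a reduction to $=_\R^{2\text{-}\mathrm{cf}}$.

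For the right strictness your ``potential-class analysis via rigidity of the scalar action'' is not a concrete plan. The paper's argument is quite different: it shows that $\equiv_\mathsf{ArGp}$ is $\boldsymbol{\Sigma}^0_4$ (as in Proposition~\ref{prop : reduction to cong_3,1}), then proves ${=_\R^{++}}\leq_B E_{=_\R^{2\text{-}\mathrm{cf}}}$ (Lemma~\ref{claim: =++ reduces to E=2cf}, via a Baire-category argument on an alternative presentation of $=_\R^{++}$), and finally invokes Hjorth--Kechris--Louveau to conclude that $=_\R^{++}$, hence $E_{=_\R^{2\text{-}\mathrm{cf}}}$, is not reducible to any $\boldsymbol{\Sigma}^0_4$ relation. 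For the left strictness you correctly anticipate the obstacle; the paper's device (Section~\ref{subsec : irreducibility for embeddability}) is to isolate a Borel $\cong$-invariant subspace $\mathcal{X}\subseteq\mathcal{A}$ on which bi-embeddability and isomorphism provably coincide (Proposition~\ref{prop : inv embed}), verify that the divisible hull $\tilde G$ of the generic group lies in $\mathcal{X}$, and then run the proof of Theorem~\ref{thm : main} unchanged with $\tilde G$ in place of $G$.
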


In the previous theorem \( =_\R^\mathrm{cf} \) and \( =_\R^{2\text{-}\mathrm{cf}} \) are the first and second jump of \( =_\R \), where \( P \mapsto P^\mathrm{cf} \) is the jump operator for quasi-orders introduced by Rosendal in~\cite{Ros} as the asymmetric version of the Friedman-Stanley jump \( E \mapsto E^+ \). The relations \( E_{=_\R^\mathrm{cf}} \) and \( E_{=_\R^{2\text{-}\mathrm{cf}}} \) are the equivalence relations canonically induced by the quasi-orders \( =_\R^{\mathrm{cf}} \) and \( =_\R^{2\text{-}\mathrm{cf}} \), respectively. In Section~\ref{subsec: another lower bound} we show that \(\equiv_\mathsf{ArGp}\) is not Borel reducible to \(\cong_\mathsf{ArGp}\) (Corollary~\ref{cor: biembed not orbit}).
Perhaps surprisingly, whether \(\cong_\mathsf{ArGp}\) is Borel reducible to \(\equiv_\mathsf{ArGp}\) is still open.

In Section~\ref{sec : circular orders} we analyze the isomorphism relation on the class of circularly ordered Archimedean groups. These are the algebraic counterparts of groups acting freely on the circle.
Circularly ordered groups have been recently investigated in a series of work including~\cite{BaiSam, BelClaGha, ClaManRiv, ClaGha}, and
they play a central role in the aforementioned work of Calegari and Dunfield~\cite{CalDun}.
We contrast Theorem~\ref{thm : main} by showing the following:

\begin{thm}[Section~\ref{sec : circular orders}]
\label{thm : CO}
The isomorphism and bi-embeddability relations \(\cong_\mathsf{CArGp}\) and \(\equiv_\mathsf{CArGp}\) on Archimedean circularly ordered groups are Borel bi-reducible with \(=_\R^{+}\), hence strictly simpler than their counterparts \(\cong_\mathsf{ArGp}\) and \(\equiv_\mathsf{ArGp}\).
\end{thm}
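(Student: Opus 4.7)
The plan is to prove both $\cong_\mathsf{CArGp}$ and $\equiv_\mathsf{CArGp}$ are Borel bi-reducible with $=^+_\R$, and then combine this with the lower bounds $=^+_\R <_B \cong_\mathsf{ArGp}$ and $=^+_\R <_B \equiv_\mathsf{ArGp}$ from Theorems~\ref{thm : main} and~\ref{thm : main-embed} to conclude strict simplicity.

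The structural tool is the circular H\"older theorem: every countable Archimedean circularly ordered group $(G,<_c)$ admits a circular-order-preserving group embedding $\phi_G \colon G \hookrightarrow S^1 = \R/\Z$, unique up to the flip $x \mapsto -x$. Let $\pi \colon \R \to S^1$ be the projection and set $\tilde G := \pi^{-1}(\phi_G(G))$; this is a countable subgroup of $\R$ containing $\Z$, well-defined on isomorphism classes since subgroups of $\R$ are closed under negation. Uniqueness up to flip yields the key rigidity: if $\psi \colon G_1 \to G_2$ is a CO isomorphism, then $\phi_{G_2} \circ \psi \in \{\phi_{G_1}, -\phi_{G_1}\}$, so $\tilde G_1 = \tilde G_2$; conversely, if $\tilde G_1 = \tilde G_2$, then $\phi_{G_2}^{-1}\circ \phi_{G_1}$ is a CO isomorphism. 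The same argument applied to embeddings gives $G_1 \embeds_\mathsf{CArGp} G_2 \iff \tilde G_1 \subseteq \tilde G_2$; in particular bi-embeddability collapses to isomorphism, so $\equiv_\mathsf{CArGp} = \cong_\mathsf{CArGp}$ as equivalence relations.

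For the Borel upper bound, I would verify that $G \mapsto \tilde G$ is Borel from a standard parameterization of countable CO groups (computing $\phi_G$ via the dynamical realization is routine). Viewing $\tilde G$ as a countable sequence of reals via any Borel enumeration gives a Borel reduction $\cong_\mathsf{CArGp} \leq_B =^+_\R$, and the same map reduces $\equiv_\mathsf{CArGp}$. For the matching lower bound, I would construct a Borel map $A \mapsto G_A$ from countable sets $A \subseteq \R$ to Archimedean CO groups with $A = B \iff G_A \cong G_B$. A rigid encoding suffices: pick Borel maps $a \mapsto t_a \in \R$ and $a \mapsto \mathcal{P}(a)$ (a set of primes encoding $a$ via, say, its binary expansion) such that the $t_a$'s are $\Q$-linearly independent for distinct inputs, then set $\tilde G_A := \Z + \sum_{a \in A} \Z_{(\mathcal{P}(a))} \cdot t_a$ and $G_A := \tilde G_A / \Z$. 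The $\Q$-linear independence pins down each rank-$1$ summand inside $\tilde G_A$, and its divisibility pattern recovers $a$.

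The main obstacle is making the rigid encoding genuinely Borel: a $\Q$-linearly independent family indexed by $\R$ cannot be produced by a Borel selector in general, so one should either restrict to a Borel class of ``generic'' inputs still large enough to realize $=^+_\R$, or replace the transcendence argument with an intrinsic construction via Baer-type rank-$1$ subgroups of $\R$ with prescribed characteristics, parallel to the reduction behind Proposition~\ref{prop : lowbnd}. Once bi-reducibility with $=^+_\R$ is in place, the strict inequalities $\cong_\mathsf{CArGp} <_B \cong_\mathsf{ArGp}$ and $\equiv_\mathsf{CArGp} <_B \equiv_\mathsf{ArGp}$ follow immediately.
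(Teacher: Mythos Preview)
Your overall strategy matches the paper's: both rely on the rigidity of circular-order-preserving embeddings into $S^1$ to collapse isomorphism and bi-embeddability to equality of the image subgroup, giving the upper bound $\leq_B {=^+_\R}$ immediately. Two points are worth flagging. First, the rigidity is stronger than you state: the paper proves (Proposition~\ref{prop : CO trivial embedding}) that any CO-embedding of a subgroup $G\leq S^1$ into $S^1$ is the identity, so the H\"older realization is genuinely unique, not merely unique up to the flip $x\mapsto -x$ (which reverses orientation and is not CO-preserving anyway). This is not a free corollary of circular H\"older and deserves its own argument; the paper obtains it via \v{Z}eleva's linearization.

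Second, the ``main obstacle'' you identify for the lower bound is not an obstacle at all, and your proposed workaround is far more elaborate than necessary. A Borel family of $\Q$-linearly (indeed algebraically) independent reals indexed by $\R$ \emph{does} exist: Mycielski's theorem (Fact~\ref{fact : perfect}, which you implicitly invoke through Proposition~\ref{prop : lowbnd}) gives a perfect set $T\subseteq(0,1)$ of algebraically independent reals, and any Borel bijection $f\colon\R\to T$ does the job. The paper's lower bound is then a one-liner: send $(x_n)_{n\in\N}$ to the subgroup $\langle e^{2\pi i f(x_n)}:n\in\N\rangle\leq S^1$. Algebraic independence guarantees that distinct generator sets produce distinct subgroups, and rigidity finishes. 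Your divisibility-pattern encoding is correct in principle but unnecessary.
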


In Section~\ref{sec : ODAG} we discuss some results about the (bi-)embeddability relation between ordered groups after removing the Archimedean type property, and other ordered structures. In particular we show:

\begin{thm}[{Section~\ref{sec : ODAG}}]
\label{theorem : ODAG}
The embeddability relation on countable ordered divisible Abelian groups is a complete analytic quasi-order. Thus, the bi-embeddability relation on countable ordered divisible Archimedean groups is a complete analytic equivalence relation.
\end{thm}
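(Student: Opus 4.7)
The plan is to Borel-reduce the embeddability quasi-order on countable linear orders, $\embeds_{\LO}$, to the embeddability on countable ordered divisible Abelian groups, hereafter $\embeds_{\mathsf{ODAG}}$. Since $\embeds_{\LO}$ is a complete analytic quasi-order by a theorem of Louveau--Rosendal, this suffices.

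Given a countable linear order $L$ with underlying set $\N$, I would let $G_L$ be the $\Q$-vector space of finitely supported functions $f \colon L \to \Q$, equipped with the anti-lexicographic order: for $f \neq g$, set $\ell^*(f,g) := \max \set{\ell \in L}{f(\ell) \neq g(\ell)}$ (this maximum exists because $\supp(f-g)$ is finite), and declare $f < g$ iff $f(\ell^*(f,g)) < g(\ell^*(f,g))$. Then $G_L$ is a countable ordered divisible Abelian group, and the assignment $L \mapsto G_L$ is Borel once $G_L$ is coded on a fixed countable set such as $\Q^{(\N)}$.

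The key equivalence to verify is $L \embeds L'$ iff $G_L \embeds G_{L'}$. The forward direction is immediate: an order-embedding $\iota \colon L \to L'$ extends $\Q$-linearly to an ordered group embedding $\hat\iota \colon G_L \to G_{L'}$ sending the basis vector $\delta_\ell$ to $\delta_{\iota(\ell)}$. The converse uses Archimedean classes, defined by $x \sim y$ iff $|x| \leq n|y|$ and $|y| \leq n|x|$ for some $n \in \N$, with quotient order $[x] < [y]$ iff $n|x| < |y|$ for all $n$. The map $g \mapsto \max \supp g$ is a well-defined order isomorphism between the non-trivial $\sim$-classes of $G_L$ and $L$. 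Any ordered-group embedding $\phi \colon G_L \to G_{L'}$ both preserves and reflects the relation $\sim$ and respects the quotient order, hence descends to an embedding $L \to L'$.

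For the bi-embeddability part, the same map $L \mapsto G_L$ witnesses a Borel reduction $\equiv_{\LO} \leq_B \equiv_{\mathsf{ODAG}}$, and Louveau--Rosendal also showed that $\equiv_{\LO}$ is a complete analytic equivalence relation, so $\equiv_{\mathsf{ODAG}}$ inherits completeness. The main technical obstacle is the converse direction of the key equivalence: confirming that the Archimedean classes of $G_L$ are in canonical order-isomorphism with $L$ via the maximum-of-support map, and that an abstract ordered-group embedding must factor through this identification. This is a careful but routine calculation using the positive cone of the anti-lexicographic order, and it is the only place where the specific choice of $G_L$ enters.
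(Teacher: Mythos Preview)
Your construction $L\mapsto G_L$ via the Hahn sum $\bigoplus_{\ell\in L}\Q$ with anti-lexicographic order is correct, and your analysis of the Archimedean classes is fine: it does give a Borel reduction of $\embeds_{\LO}$ to $\embeds_{\mathsf{ODAG}}$. The problem is the premise. The quasi-order $\embeds_{\LO}$ is \emph{not} a complete analytic quasi-order: by Laver's theorem the countable linear orders are better-quasi-ordered under embeddability, so $\embeds_{\LO}$ has no infinite antichains and no infinite strictly descending chains, and in particular $=_\R$ does not even Borel reduce to $\equiv_{\LO}$. The result you attribute to Louveau--Rosendal concerns \emph{colored} linear orders, not plain ones, and this distinction is exactly the point.

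The paper's proof repairs this by reducing from embeddability on colored linear orders $\embeds_{\mathsf{CLO}}$ (which \emph{is} complete analytic). To encode the colors one fixes a countable family $\{H_n\}_{n\in\N}$ of pairwise non-embeddable divisible Archimedean subgroups of $\R$ (such a family exists by the results of Section~\ref{sec : Ar}), and for a colored order $(L,c)$ forms the Hahn sum $\bigoplus_{\ell\in L} H_{c(\ell)}$ with the anti-lexicographic order. Your Archimedean-class argument still recovers the underlying order $L$, and the additional observation is that the quotient of successive convex subgroups at level $\ell$ is order-isomorphic to $H_{c(\ell)}$, so an embedding of the Hahn sums forces the colors to match as well. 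Your approach is thus the right skeleton but is missing precisely the coloring ingredient needed to get past Laver's obstruction.
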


This particularly implies that embeddability on countable real closed fields is a complete analytic quasi-order, and that the corresponding bi-embeddability relation is a complete analytic equivalence relation.
Theorem~\ref{theorem : ODAG} and the other results of Section 7 extend the work of Rast and Sahota~\cite{RasSah} about the isomorphism relation for first order o-minimal theories.

\section{Preliminary results}
\label{sec : Ar}

A group \(G\) is \emph{left-orderable} if it admits a strict total order
\(<\) such that \(y < z\) implies \(xy < xz\) for all \(x, y, z \in G\). 
A countable group \(G\) is left-orderable if and only if it admits a faithful action on \(\R\) by order preserving homeomorphisms.
(E.g., see \cite[Proposition~2.1]{Nav10}).
Note that this characterization fails after dropping countability, since Mann~\cite{Man15} proved that the group of germs at infinity of orientation preserving homeomorphisms
of \(\R\) admits no nontrivial action on the real line.

An \emph{order preserving homomorphism} between left-ordered groups is a group homomorphism that preserves their orders. In particular, every order preserving homomorphism is automatically an embedding because of the anti-reflexivity and linearity of strict orders. Throughout this paper, when we talk about ordered groups, we simply say ``isomorphism'' instead of ``order preserving isomorphism'' whenever it is clear from the context.

Recall that an ordered group is \emph{Archimedean} if for every \(g,h\in G\), there is \(n\in \Z\) such that \(g < h^n\).
Archimedean left-ordered groups appeared as early as the work of Stolz~\cite{Sto} going back to 1891. Soon thereafter H\"older proved the following:

\begin{thm}[H\"older~\cite{Hol}]
\label{theorem : Holder}
Every Archimedean left-ordered group is order isomorphic to a subgroup of \((\R,+)\) equipped with the natural ordering on \(\R\).
\end{thm}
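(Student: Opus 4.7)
\emph{Proof plan.} The strategy is to construct an explicit order-preserving embedding $\varphi\colon G \to (\R,+)$ that plays the role of a ``logarithm base $a$'' for a fixed positive element $a \in G$. I would proceed in two steps: first establish (by the classical argument going back to H\"older) that any nontrivial Archimedean left-ordered group is automatically abelian, and then use this to build $\varphi$.

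The commutativity step is the main obstacle, and it is where the full force of the Archimedean property is used. The idea is that if some commutator $c = [g,h]$ with $g, h > e$ were nontrivial --- say $c > e$ --- then $c$ would be squeezed by bounded combinations of $g$, $h$, and their inverses in a way that prevents $c^n$ from simultaneously dominating both $g$ and $h$, contradicting Archimedeanness applied to $c$ against $\max(g,h)$. Once abelianness is established I switch to additive notation for the rest of the argument.

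For the embedding, fix $a > 0$ in $G$. For each $g > 0$ and each $n \in \N^+$, the Archimedean property produces a unique integer $m_n(g) \in \Z$ with $m_n(g)\, a \leq n g < (m_n(g)+1)\, a$. Set $\varphi(g) := \lim_{n \to \infty} m_n(g)/n$, where existence of the limit follows from the Fekete-type estimate $|m_{n+k}(g) - m_n(g) - m_k(g)| \leq 1$ read directly off the defining inequalities; extend by $\varphi(0) = 0$ and $\varphi(-g) = -\varphi(g)$. Since $G$ is abelian we have $n(g+h) = ng + nh$, so the sandwich $(m_n(g) + m_n(h))\, a \leq n(g+h) < (m_n(g) + m_n(h) + 2)\, a$ yields additivity of $\varphi$ in the limit; strict order-preservation, and hence injectivity, follows because $g > 0$ forces $n_0 g \geq a$ for some $n_0$ by Archimedeanness, giving $m_{n_0 k}(g) \geq k$ for all $k$ and thus $\varphi(g) \geq 1/n_0 > 0$. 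Beyond the commutativity lemma, the argument is routine subadditive bookkeeping, and the image $\varphi(G) \subset \R$ is the desired order-isomorphic copy of $G$.
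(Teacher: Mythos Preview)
The paper does not supply its own proof of H\"older's theorem; it is stated as a classical result and attributed to H\"older~\cite{Hol}. So there is nothing to compare your argument against directly. That said, the paper does later (in the proof of Proposition~\ref{prop : continuous}) carry out an explicit embedding of an Archimedean group into $\R$ via Dedekind cuts: fixing a least positive element $\ell$, it sends $t$ to $\sup\{m/n : m\ell <^G nt\}$. This is exactly your construction in disguise --- your $\lim_n m_n(g)/n$ is the supremum of that cut --- so on the embedding side your plan is the standard one and matches what the paper implicitly does.

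The part I would flag is your commutativity sketch. The sentence ``$c$ would be squeezed by bounded combinations of $g$, $h$, and their inverses in a way that prevents $c^n$ from simultaneously dominating both $g$ and $h$'' is not an argument, and in a group that is only \emph{left}-ordered it is not clear how to make it one: without right-invariance you cannot freely compare $c^n$ against products like $g^k h^l$, nor conclude that $e < x < y$ implies $x^n < y^n$. The usual route is to first show that the Archimedean property forces the positive cone to be normal (equivalently, the order is bi-invariant), and only then run either a commutator-bounding argument or the embedding construction directly. Your downstream steps all silently use bi-invariance (e.g.\ the sandwich $m_n(g)a \le ng < (m_n(g)+1)a$ presupposes that powers of $a$ and $g$ interleave coherently), so this lemma is doing real work and should be stated and proved rather than waved at.
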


An immediate consequence is that the Archimedean type property implies commutativity. Henceforth, we shall say ``Archimedean order'' instead of ``Archimedean left-order'', and we shall use exclusively the additive notation for Archimedean (ordered) groups.
Using Theorem~\ref{theorem : Holder} one can also characterize Archimedian groups as those ordered groups whose dynamical realization is a free action on \(\R\) by orientation preserving homeomorphisms. For more details on the dynamical realization of Archimedean orders see \cite[Section~3.1]{Nav10}.
Another consequence of H\"older's theorem is the following classical lemma.

\begin{lemma}[{Hion's Lemma~\cite{Hio54}}]
\label{lem : Hion}
Suppose that \(A\) and \(B\) are two (necessarily Archimedean) subgroups of \(\R\) and \(h\colon A \to B\) is an order preserving homomorphism. Then, there exists a scalar \(\lambda \in \R^{+}\) such that \(h(a)= \lambda a\),  for every \(a\in A\). In fact, such \(\lambda\) is computed as the ratio \(\frac{h(a)}{a}\), for any \(a\in A\).
\end{lemma}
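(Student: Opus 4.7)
The plan is to prove Hion's Lemma by fixing one positive element of \(A\), defining \(\lambda\) as the obvious ratio, and then using a rational sandwich argument to transfer the value to every other element. If \(A=\{0\}\) there is nothing to prove, so I would choose any \(a_0 \in A\) with \(a_0 > 0\) and set \(\lambda := h(a_0)/a_0\). Since \(h\) is a homomorphism \(h(0)=0\), and order preservation gives \(h(a_0) > 0\), so \(\lambda \in \R^+\) as required.

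The core step is a density argument against \(\Q\). Because \(h\) is an order-preserving homomorphism, for any integers \(p \in \Z\) and \(q \in \N^+\) and any \(a \in A\) one has the chain of equivalences
\[
\frac{p}{q} < \frac{a}{a_0} \iff p a_0 < q a \iff p h(a_0) < q h(a) \iff \frac{p}{q} < \frac{h(a)}{h(a_0)},
\]
where I use that \(h\) is additive (so \(h(qa) = q h(a)\) and \(h(pa_0) = p h(a_0)\) for integer multiples) and strictly order preserving. An analogous chain holds with \(<\) replaced by \(>\). Hence the two real numbers \(a/a_0\) and \(h(a)/h(a_0)\) have the same Dedekind cut in \(\Q\), so they are equal. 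Rewriting, \(h(a) = (h(a_0)/a_0)\, a = \lambda a\).

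I do not see a genuine obstacle here; the only small things to be careful about are separating the cases \(a > 0\), \(a = 0\), \(a < 0\) (handled uniformly by allowing \(p\) to be negative in the sandwich), and noting that the value \(\lambda\) produced is independent of the choice of \(a_0\), which is automatic since the computation shows \(h(a)/a = \lambda\) for every nonzero \(a \in A\).
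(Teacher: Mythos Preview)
Your argument is correct and is the standard proof of Hion's Lemma via a rational Dedekind-cut sandwich. Note, however, that the paper does not give its own proof of this lemma: it is stated as a classical result with a citation to \cite{Hio54}, and is then used freely throughout. So there is nothing to compare against; your write-up simply supplies what the paper omits. (The Dedekind-cut idea you use does appear later in the paper, in the proof of Proposition~\ref{prop : continuous}, but there it is employed to build the H\"older embedding rather than to establish Hion's Lemma.)
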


\begin{comment}
\begin{proof}
It suffices to show that for every \(a\in A\), the ratio \(\frac{h(a)}{a}\) is constant. 
Take any two positive elements \(a_0,a_1\in A\), and suppose that \(\frac{h(a_0)}{a_0}\neq\frac{h(a_1)}{a_1}\). Then the quotients \(\frac{a_0}{a_1}\) and \(\frac{h(a_0)}{h(a_1)}\) are different too, say \(\frac{h(a_0)}{h(a_1)}<\frac{a_0}{a_1}\). Now pick any \(\frac{h(a_0)}{h(a_1)}<\frac{m}{n}<\frac{a_0}{a_1}\).  It follows that \(nh(a_0)<mh(a_1)\) and \(ma_1<na_0\), which contradicts the fact that \(h\) is increasing.
\end{proof}
\end{comment}

\subsection{Spaces of Archimedean orderings}
For a countable left-orderable group \(G\), let \(\LO(G)\) denote the compact Polish space of left-orderings of \(G\) in the sense of Sikora~\cite{Sik04}. 
By identifying any left-ordering \(<\) with the corresponding positive cone \(P_{<}\coloneqq\set{g\in G}{1_G<g}\) we have a one-to-one correspondence between left-orderings on \(G\) and those subsets of \(P\subseteq G\) such that
\begin{enumerate}
\item
\label{cond : lo1}
\(P \cdot P\subseteq P\);
\item
\label{cond : lo2}
\(P\sqcup P^{-1}\sqcup\{1_G\} = G\).
\end{enumerate}
In fact, if \(P\subseteq G\) satisfies \eqref{cond : lo1}--\eqref{cond : lo2} then we can define a left-compatible order \(<_P\) on \(G\) by declaring
\(g <_P h \iff g^{-1}h \in P\).
Therefore, \(\LO(G)\) can be regarded as a closed subset of \(2^G\), endowed with the product topology. Throughout this paper, we concentrate on Abelian groups, so that \(G\) is left-orderable if and only if \(G\) is torsion-free.
If \(G\) is a torsion-free group of rank \(n\) then \(\LO(G)\) and \(\LO(\Q^n)\) are Borel isomorphic. In fact, every order on \(G\) extends canonically to an order on \(\Q^n\). (We are tacitly identifying \(\Q^n\) with the divisible hull of \(G\).)

Here we focus on Archimedean orderable groups.
Let \(\Ar(G)\) be the \emph{Polish space of Archimedean orders on \(G\)}.
Notice that \(\Ar(G)\) can be construed as the \(G_\delta\) subset of \(\LO(G)\) defined by \(\Ar(G) = \{P \in \LO(G) : \forall x,y\, \exists k\in \Z \, (x^{-1}y^k \in P)\}\).

The analysis of \(\Ar(G)\), for a given Abelian group \(G\), began essentially with the work of Teh~\cite{Teh} and Minassian~\cite{Min72}.
By construction we have that whenever \(G\) is a torsion-free Abelian group of rank \(n\), then \(\Ar(G)\) is Borel isomorphic to \(\Ar(\Q^n)\).
Then we can focus only on the spaces \(\Ar(\Q^n)\). 
It is obvious that \(\Q\) can be given only two different left-orders and both of them are Archimedean: indeed,
 \(\LO(\Q) = \Ar(\Q) = \{<,>\}\).
However, Teh~\cite{Teh} showed that for \(n > 1\) there exist continuum many different Archimedean orders on \(\Q^n\), and that they can be constructed as follows.

Denote by \(e_{1} = (1,0,\dotsc,0), \dotsc, e_{n} = (0,\dotsc,0,1)\) the canonical basis of the \(\Q\)-vector space \(\Q^{n}\).
By H\"older's theorem, for any \(P\in \Ar(\Q^{n})\) there exists an order preserving group homomorphism \(\phi\colon(\Q^{n},<_P)\to (\R,<)\).

\begin{defn}
\label{def : type}
Let \(P \in \Ar(\Q^n)\) and \(\phi\) be an embedding as above.
For \(i=1,\dotsc, n\), define the \emph{type} \(\tau(P) = (\alpha_1,\dotsc, \alpha_n)\) of \(P\) by setting \(\alpha_{i}= \phi(e_{i})/{|\phi(e_{1})}|\). 
Observe that \( (\alpha_{1}, \dotsc, \alpha_{n}) \in \mathbb{R}^{n}\) is such that
\begin{enumerate}
\item
\label{it : type1}
\(\alpha_{1}= \pm 1\),
\item
\label{it : type2}
\(\alpha_{1},\dotsc,\alpha_{n}\) are linearly independent over \(\mathbb{Q}\).
\end{enumerate}
\end{defn}

Note that the type \(\tau(P)\) does not depend on the choice of \(\phi\) because of Hion's Lemma. Moreover, \(\alpha_1\) coincides with the sign of \(e_1\) under \(<_P\).

\begin{thm}[Teh~\cite{Teh}]
\label{thm : Teh}
Let \(P, S \in \Ar(\Q^{n})\) with types \(\tau(P) = (\alpha_{1},\dotsc, \alpha_{n})\) and \(\tau(S) = (\beta_{1},\dotsc, \beta_{n})\). Then, \(P=S\) if and only if \(\tau(P) = \tau(S)\).
\end{thm}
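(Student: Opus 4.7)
The plan is to show that the type $\tau(P)$ completely determines the positive cone $P$ via an explicit formula, from which both directions of the biconditional follow.

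First, for the forward direction, I would note that it is essentially automatic given the definition of type, but one should confirm that $\tau(P)$ is well defined (does not depend on the choice of embedding $\phi$). This is precisely Hion's Lemma (Lemma~\ref{lem : Hion}): if $\phi, \phi' \colon (\Q^n, <_P) \to (\R, <)$ are two order preserving embeddings, then restricting both to the subgroup $\Z e_1$ and applying Hion's Lemma gives a positive real $\lambda$ with $\phi'(a) = \lambda \phi(a)$ for all $a$ in this subgroup. But in fact, by applying Hion's Lemma to the composition $\phi' \circ \phi^{-1} \colon \phi(\Q^n) \to \phi'(\Q^n)$, which is an order preserving isomorphism between subgroups of $\R$, the same scalar $\lambda$ works on all of $\Q^n$. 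Hence the ratios $\phi(e_i)/|\phi(e_1)|$ are unchanged, so $\tau(P) = \tau(S)$ whenever $P = S$.

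For the backward direction, the key step is to express the positive cone $P$ directly in terms of $\tau(P) = (\alpha_1, \dots, \alpha_n)$. Pick any embedding $\phi \colon (\Q^n, <_P) \to (\R, <)$ and write $c = |\phi(e_1)| > 0$, so $\phi(e_i) = c \alpha_i$. For a nonzero vector $v = (q_1, \dots, q_n) \in \Q^n$ we have
\[
\phi(v) \;=\; \sum_{i=1}^n q_i \phi(e_i) \;=\; c \sum_{i=1}^n q_i \alpha_i.
\]
Since $\phi$ is order preserving and $c > 0$, the element $v$ lies in $P$ if and only if $\sum_{i=1}^n q_i \alpha_i > 0$. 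Thus
\[
P \;=\; \Bigl\{(q_1,\dots,q_n) \in \Q^n \setminus \{0\} \;:\; \sum_{i=1}^n q_i \alpha_i > 0 \Bigr\}.
\]
Here the $\Q$-linear independence condition \eqref{it : type2} in Definition~\ref{def : type} is used crucially: it guarantees that $\sum q_i \alpha_i \neq 0$ for every nonzero rational tuple, so that the description of $P$ is a genuine bipartition of $\Q^n \setminus \{0\}$ consistent with axiom~\eqref{cond : lo2} for a positive cone.

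Since the right-hand side of the displayed formula depends only on $\tau(P)$, we conclude that $\tau(P) = \tau(S)$ implies $P = S$, completing the proof. No step is really an obstacle here; the only point that requires a moment's care is checking that $\tau(P)$ is independent of the auxiliary embedding $\phi$, and Hion's Lemma handles this at once.
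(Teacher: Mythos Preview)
Your proof is correct and matches the argument implicit in the paper. The paper itself does not give a self-contained proof of this theorem (it is attributed to Teh), but the surrounding text supplies exactly the two ingredients you use: the remark just before the theorem that $\tau(P)$ is well defined by Hion's Lemma, and the explicit description just after the theorem of the order $<_\alpha$ associated to a type $\alpha$, which is your displayed formula for $P$ in terms of $(\alpha_1,\dots,\alpha_n)$.
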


Conversely, suppose that \(\alpha = (\alpha_1, \dotsc, \alpha_n) \in \R^n\) is a type that satisfies conditions \eqref{it : type1}--\eqref{it : type2} of Definition~\ref{def : type}.
\begin{comment}
\begin{enumerate}
    \item 
    \label{type : item1}
    \(\alpha_1 = \pm 1\);
    \item
    \label{type : item2}
    \(\set{\alpha_i}{i=1,\dotsc, n}\) is linearly independent over \(\Q\).
\end{enumerate}
\end{comment}
Then, the relation \(<_\alpha\) on \(\Q^n\) defined by declaring
\[
(x_1,\dotsc, x_n) <_\alpha (y_1,\dotsc, y_n) \iff \sum_i x_i\alpha_i < \sum_i y_i\alpha_i
\]
is an Archimedean order of the group \(\Q^n\) with type \(\alpha\). It follows that there is a Borel one-to-one correspondence between the elements of \(\Ar(\Q^n)\) and the standard Borel space of types for Archimedean orders on \(\Q^n\).

\begin{remark}
The above construction can also be found in \cite[Example~1.7]{ClaRol} and has an immediate geometrical interpretation. Suppose that \(\alpha = (\alpha_1, \alpha_2)\) satisfies \eqref{it : type1}--\eqref{it : type2} from above so that \(\alpha\) is a normal vector to the line \(\mathbf{r} = \set{(x,y)\in \Q^2}{ y = -\frac{\alpha_1}{\alpha_2}x}\). Then, the positive cone \(P_{<_\alpha}\) is the semiplane given by the
rational  points that lie above (or below, accordingly to the sign of \(\alpha_1\)) \(\mathbf{r}\).
For example, if \(\alpha = (1, \sqrt{2})\), then \(P_{<_\alpha} = \set{(x,y)\in \Q^2}{y\geq -\frac{1}{\sqrt{2}}x}\).
\end{remark}

Teh's work yields the existence of continuum many different Archimedean orders on \(\Q^n\), for \(n> 1\). Thus, it is very natural to ask in how many non-isomorphic way we can equip \(\Q^n\) with an Archimedean order, and whether we can classify them in a concrete way (i.e., using real numbers as complete invariants).

Let \(\cong_{\Ar(\Q^n)}\) be the isomorphism relation on the Archimedean groups with underlying set \(\Q^{n}\). Notice that \(\cong_{\Ar(\Q^n)}\) is a countable\footnote{Recall that an equivalence relation is said to be \emph{countable} if all of its classes are countable.}
Borel equivalence relation because it is induced by the action of \(\Aut(\Q^n)\), which coincides with \(\GL_n(\Q)\).
That is,
for any \(P, S \in \Ar(\Q^n)\) we have
\[
(\Q^n, <_P) \cong_{\Ar(\Q^n)} (\Q^n, <_S) \quad \iff \quad \exists \psi \in \GL_n(\Q) (\psi(P) = S).
\]

In the remainder of this section we prove Theorem~\ref{thm : Ar(Q2)}, which particularly implies that \(\cong_{\Ar(\Q^2)}\) is not concretely classifiable.
First, we briefly introduce some notions that are fundamental to the theory of countable Borel equivalence relations.

Let \(E\) and \(F\) be equivalence relations on \(X\) and \(Y\), respectively. A map \(f \colon X \to Y\) is a
\emph{homomorphism} from \(E\) to \(F\) if \(x\mathbin{E}y \implies {f(x)}\mathbin{F} {f(y)}\) for all \(x,y \in X\).
A subset \(A\subseteq X\) is said to be \(E\)-invariant if and only if it is a union of \(E\)-classes.

\begin{defn}
An equivalence relation \(E\) on a Polish space \(X\) is said to be \emph{generically ergodic} if for every Borel homomorphism from \(E\) to \(=_\R\) there is a Borel \(E\)-invariant comeager set \(C\subseteq X\) such that \(f\restriction C\) is constant.
\end{defn}

\begin{comment}
It is well-known that generic ergodicity is an obstruction to concrete classification. More precisely, if
\(E\) is a generically ergodic equivalence relation and every \(E\)-class is meager, then \(E\) is not concretely classifiable. (E.g., see Hjorth)
\end{comment}

An example of generically ergodic equivalence relation is \(E_0\), which is defined as eventual equality on infinite binary sequences, that is,
 \(x \mathbin{E_0} y\) if \(\exists m \forall n\geq m (x(n) = y(n))\) for all \(x,y\in 2^\N\). Within the class of (countable) Borel equivalence relations, \(E_0\) is the immediate successor of \(=_\R\) in the sense that a Borel equivalence relation
\(E\) is not concretely classifiable if and only if \(E_0 \leq_B E\). This was proved by Harrington, Kechris, and Louveau~\cite{HarKecLou}
generalizing earlier work of Glimm~\cite{Gli61} and Effros~\cite{Eff65},
and is often called the \emph{general Glimm-Effros dichotomy}.

\begin{defn}
Suppose that \(E\) and \( F\) are countable Borel equivalence relations on the standard Borel spaces \(X\) and \(Y\), respectively. We say that \(E\) is \emph{weakly Borel reducible} to F if there exists a countable-to-one Borel homomorphism \(f \colon X \to Y\) from \(E\) to \(F\). In this case, we say that \(f\) is a \emph{weak Borel reduction} from \(E\) to \(F\).
\end{defn} 

A consequence of the fact that \(E_0\) is generically ergodic is that there is no weak Borel reduction from \(E_0\) to \(=_\R\).
This fact will be crucially used to prove Theorem~\ref{thm : Ar(Q2)}.

\begin{proof}[Proof of Theorem~\ref{thm : Ar(Q2)}]
Consider the group \(\GL_2(\Z)\) of all matrices
\(\begin{psmallmatrix} a & b\\ c & d \end{psmallmatrix}\) with integer coefficients and
determinant \( ad-bc = \pm 1 \).
We let \(\GL_2(\Z)\) act on \(\R\cup\{\infty\}\) by fractional linear transformations, that is, \(\begin{psmallmatrix} a & b\\ c & d \end{psmallmatrix}\cdot x = \frac{ax+b}{cx+d}\). (We stipulate that \(\frac{ax+b}{cx+d} = \infty\) when \(cx+d=0\) or \(x=\infty\).)

When \(X\subset \R\cup\{\infty\}\) is \(\GL_2(\Z)\)-invariant we denote by \(E^{X}_\GL\) the equivalence relation on \(X\) induced by \(\GL_2(\Z)\).
Since all elements of \(\Q\cup\{\infty\}\) form a unique class,
it is immediate that \(E^{\R\smallsetminus \Q}_\GL\) is Borel bireducible with \(E^{\R\cup \{\infty\}}_\GL\).
Further, it is well-known that \(E^{\R\cup\{\infty\}}_\GL\) is Borel bi-reducible with \(E_0\). (E.g., see~\cite[Example~1.4(C)]{JacKecLou}.)

\begin{proof}[Claim]
There is a weak Borel reduction from \(E^{\R\smallsetminus \Q}_\GL\) to \(\cong_{\Ar(\Q^2)}\).
\renewcommand{\qed}{}
\end{proof}

The above claim implies that there is a weak Borel homomorphism from \(E_0\) to \(\cong_{\Ar(\Q^2)}\). Now, if there was a Borel reduction from \(\cong_{\Ar(\Q^2)}\) to \(=_\R\), then one could define a weak Borel isomorphims from \(E_0\) to \(=_\R\) by composing such reduction with the previous homomorphism, a contradiction.
It follows that \({\cong_{\Ar(\Q^2)}} \nleq_B {=_\R} \), and thus \(E_0 \leq_B {\cong_{\Ar(\Q^2)}}\) by the general Glimm-Effros dichotomy.

It remains to prove the claim. 
For each \(\alpha \in {\R\smallsetminus \Q}\), consider the group defined as
\(\spann_\Q \{1, \alpha\}\) with the obvious ordering inherited from \(\R\).
If \(\alpha, \beta \in {\R\smallsetminus \Q}\) are such that
\(\begin{psmallmatrix} a & b\\ c & d \end{psmallmatrix}\cdot \alpha = \beta\), for some \(\begin{psmallmatrix} a & b\\ c & d \end{psmallmatrix} \in \GL_2(\Z)\),
then the group \(\spann_\Q \{1, \alpha\}\) is order isomorphic to \(\spann_\Q \{1, \beta\}\).
To see this, notice that \(|a\alpha+b|\) and \(|c\alpha+d|\) are linearly independent over \(\Q\). Let \(\lambda \coloneqq 1 / |c\alpha + d|\). Since \(\lambda |c\alpha+d| = 1 \) and \(\lambda |a\alpha+b| = |\beta|\),  the map given by the scalar multiplication \(x\mapsto \lambda x\) sends a basis of \(\spann_\Q \{1, \alpha\}\) into a basis of \(\spann_\Q \{1, \beta\}\).
It follows that \(x\mapsto \lambda x\) is an order preserving isomorphism as desired.

Finally we show that given \(\beta \in \R\smallsetminus \Q\), there are at most countably many \(\alpha \in \R\smallsetminus \Q\) so that
\(\spann_\Q \{1, \alpha\}\) is order isomorphic to \(\spann_\Q \{1, \beta\}\).
It will follow that the map \(\alpha\mapsto \spann_\Q \{1, \alpha\}\) is
countable-to-one, and hence a weak Borel reduction from \(E_\GL^{\R\smallsetminus \Q}\) to \(\cong_{\Ar(\Q^2)}\).
Suppose that \(\spann_\Q \{1, \alpha\}\) is order isomorphic to \(\spann_\Q \{1, \beta\}\). By Hion's Lemma there exists some \(\lambda\in \R^+\) such that the map \(x\mapsto\lambda x \) realizes an isomorphism from \(\spann_\Q \{1, \alpha\}\) to \(\spann_\Q \{1, \beta\}\). It follows that \(\lambda = \lambda\cdot 1 \in \spann_\Q \{1, \beta\}\) and thus \(\lambda = m\beta+ n\)  for some \(m,n \in \Q\). Similarly, we obtain \(\lambda \alpha \in \spann_\Q \{1, \beta\}\), which implies that \((m\beta+n)\alpha = k\beta +\ell\) for some \(k,\ell \in \Q\). It follows that \(\alpha = \frac{k\beta+\ell}{m\beta+n}\) for some \(k,\ell,m,n \in \Q\), in particular there are only countably many possible values for \(\alpha\).
%This shows that the map \(\alpha \mapsto \spann_\Q \{1, \alpha\}\) is a weak Borel reduction from \(E_\GL^{\R\smallsetminus \Q}\) to \(\cong_{\Ar(\Q^2)}\).
\end{proof}

As recalled at the beginning of this section, it has been well-known since the work of Teh~\cite{Teh} that \(\Ar(\Q^2)\) is uncountable.
Theorem~\ref{thm : Ar(Q2)} implies the stronger fact that
 \(\Q^2\) admits continuum many Archimedean orders \emph{up to order preserving isomorphism}.
In fact, it shows that the \emph{Borel cardinality} of \(\Ar(\Q^2)\) modulo the isomorphism relation is strictly bigger than the one of \(\R\) modulo identity.
That is, there is no injection from \(\Ar(\Q^2)/{\cong_{\Ar{\Q^2}}}\) into \(\R\) admitting a Borel lifting.

Another way of rephrasing Theorem~\ref{thm : Ar(Q2)} is saying that the quotient Borel space \(\Ar(\Q^2)/\GL_2(\Q)\) is not standard. The quotient structures of spaces of left-orderings has been already investigated by Calderoni and Clay~\cite{CC22}, who proved that the quotient Borel space  \(\LO(G)/\Inn(G)\) is not standard for a large class of groups including all nonabelian groups that are not locally indicable and the nonabelian free groups on \(n\) generators, for all \(n\geq 2\).

\begin{question}
Is \(\cong_{\Ar(\Q^2)}\) Borel reducible to \(E_0\)?
\end{question}

\begin{question}
What is the Borel complexity of \(\cong_{\Ar(\Q^n)}\) for \(n>1\)?
\end{question}

\subsection{Archimedean groups of rank 1}

In this short subsection we show that also Baer's analysis of torsion-free Abelian groups of rank \(1\) shows that we cannot use numerical invariant to classify completely \(\cong_\mathsf{ArGp}\).

It is well-known that an Abelian group is torsion-free of rank \(1\) if and only if it is a subgroup of \(\Q\), and the same remains true if we move to rank \(1\) ordered Abelian groups and equip \(\Q\) with its natural order. 
Let \(\mathsf{S}(\Q)\) be the set of subgroups of \(\Q\). It is easily checked that \(\mathsf{S}(\Q)\) is a \(G_{\delta}\) subset of \(2^{\Q}\), thus a Polish space. All elements of \(\mathsf{S}(\Q)\) will be regarded as ordered groups with the obvious ordering. Let \(\cong_{\mathsf{S}(\Q)}\) be the isomorphism relation on \(\mathsf{S}(\Q)\), and let \(\cong^o_{\mathsf{S}(\Q)}\) be the ordered isomorphism relation on \(\mathsf{S}(\Q)\).

The following well-known statement rephrases Baer's classification theory~\cite{Bae} in terms of Borel reducibility. 

\begin{prop}[See {\cite[Theorem~1.3]{Tho02}}]
\label{prop : Baer}
\({E_{0}} \sim_{B} {\cong_\mathsf{S(\Q)}}\).
\end{prop}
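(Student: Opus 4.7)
The plan is to use Baer's classical classification of torsion-free Abelian groups of rank \(1\) to reformulate the isomorphism relation in terms of characteristics, and then produce explicit Borel reductions to and from \(E_0\) at the level of characteristics. For each nontrivial \(G \in \mathsf{S}(\Q)\), fix any nonzero \(g_G \in G\) in a Borel way (e.g.\ the \({<_\Q}\)-least positive element) and, once a Borel enumeration \((p_n)_{n \in \N}\) of the primes is fixed, define the \emph{characteristic} \(\chi_G \in (\N \cup \{\infty\})^{\N}\) by \(\chi_G(n) = \sup\{k \in \N : p_n^{-k}g_G \in G\}\). Baer's theorem says \(G \cong G'\) iff \(\chi_G \sim \chi_{G'}\), where \(\chi \sim \chi'\) means \(\chi(n) = \chi'(n)\) for all but finitely many \(n\) and whenever the values differ both of them are finite. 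The remaining task is to Borel-reduce this relation \(\sim\) both to and from \(E_0\).

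For the lower bound \({E_0} \leq_B {\cong_{\mathsf{S}(\Q)}}\), I would map each \(x \in 2^{\N}\) to
\[
G_x = \langle p_n^{-k} : k \geq 1,\ x(n) = 1 \rangle \leq \Q,
\]
a Borel assignment whose characteristic at \(1 \in G_x\) equals \(\infty\) when \(x(n) = 1\) and \(0\) otherwise. Since every entry of \(\chi_{G_x}\) takes value in \(\{0,\infty\}\), the type-equivalence collapses to the condition that \(\{n : x(n)=1\}\mathbin{\triangle}\{n : y(n)=1\}\) is finite, which is exactly \(x \mathbin{E_0} y\). Thus \(x \mapsto G_x\) is a Borel reduction from \(E_0\) to \(\cong_{\mathsf{S}(\Q)}\).

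For the upper bound \({\cong_{\mathsf{S}(\Q)}} \leq_B {E_0}\), since \(G \mapsto \chi_G\) is Borel it suffices to Borel reduce \(\sim\) on \((\N \cup \{\infty\})^\N\) to \(E_0\). I would fix a partition \(\N = \bigsqcup_n B_n\) into infinite blocks (via any recursive pairing function) and enumerate each block increasingly as \(B_n = \{b^n_0 < b^n_1 < \dotsb\}\). Define \(\tilde\chi \in 2^\N\) by declaring \(\tilde\chi \restriction B_n\) to be the constant-\(1\) sequence when \(\chi(n) = \infty\), and to be the indicator of \(\{b^n_0, \dotsc, b^n_{\chi(n)-1}\}\) inside \(B_n\) when \(\chi(n) < \infty\). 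If \(\chi \sim \chi'\) then \(\chi\) and \(\chi'\) agree on cofinitely many coordinates, so \(\tilde\chi\) and \(\tilde{\chi}'\) agree off finitely many complete blocks, yielding \(\tilde\chi \mathbin{E_0} \tilde{\chi}'\); conversely, a coordinate \(n\) with \(\chi(n) = \infty\) and \(\chi'(n) < \infty\) would produce infinitely many bit disagreements on the infinite block \(B_n\), contradicting \(E_0\)-equivalence, while any finite symmetric difference of \(\tilde\chi\) and \(\tilde{\chi}'\) is confined to finitely many blocks and hence forces \(\chi\) and \(\chi'\) to agree cofinitely. Therefore \(\chi \sim \chi' \iff \tilde\chi \mathbin{E_0} \tilde{\chi}'\), completing the reduction.

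The main technical point is the upper bound: one has to convert the ``rigid'' part of \(\sim\) (the \(\infty\)-pattern is a strict isomorphism invariant, not susceptible to finite shifts) into the purely asymptotic relation \(E_0\). Using \emph{infinite} blocks is what does the job, since it forces a single \(\infty\)-vs.-finite mismatch to create infinitely many bit disagreements, so that the \(E_0\)-class of \(\tilde\chi\) simultaneously records the \(\infty\)-support of \(\chi\) and the cofinite agreement of its finite part.
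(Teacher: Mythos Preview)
The paper does not actually supply a proof of this proposition; it is stated with a reference to \cite[Theorem~1.3]{Tho02} and treated as folklore. So your proposal is being compared against the standard Baer--Thomas argument rather than anything in the paper itself. Your upper bound \({\cong_{\mathsf{S}(\Q)}} \leq_B E_0\) via infinite blocks is correct and is essentially the usual proof.

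However, your lower bound contains a genuine error. You map \(x \in 2^\N\) to a group \(G_x\) whose characteristic at the basepoint is \(\infty\) on \(\{n : x(n)=1\}\) and \(0\) elsewhere, and then claim that type-equivalence between two such characteristics amounts to \(\{n:x(n)=1\}\mathbin{\triangle}\{n:y(n)=1\}\) being finite. This is false: your own definition of \(\sim\) requires that at every coordinate where \(\chi\) and \(\chi'\) differ, \emph{both} values are finite. For \(\{0,\infty\}\)-valued characteristics, any coordinate of disagreement has one value equal to \(\infty\), so condition (b) fails whenever there is any disagreement at all. Thus \(\chi_{G_x} \sim \chi_{G_y}\) iff \(\chi_{G_x} = \chi_{G_y}\) iff \(x = y\), and your map reduces \(=_{2^\N}\), not \(E_0\), to \(\cong_{\mathsf{S}(\Q)}\).

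The fix is easy: use characteristics with only \emph{finite} entries, e.g.\ send \(x\) to the subgroup of \(\Q\) generated by \(1\) together with \(\{p_n^{-1} : x(n)=1\}\), so that \(\chi_{G_x}(n) = x(n) \in \{0,1\}\). Then condition (b) is vacuous and \(\sim\) collapses to eventual agreement, which is \(E_0\). (A minor side issue: your Borel selector ``the \(<_\Q\)-least positive element of \(G\)'' need not exist, e.g.\ for \(G = \Q\); use instead the first nonzero element under a fixed enumeration of \(\Q\).)
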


Moreover, one can show directly that \(\cong_{\mathsf{S}(\Q)}\) and \(\cong^o_{\mathsf{S}(\Q)}\) coincide. First, we isolate the following fact: 

\begin{lemma}
\label{lem : rank 1}
Let \(A,B \in {\mathsf{S}(\Q)}\). If \(f\colon A \to B\) is a group homomorphism, then there exists a nonzero \(q\in \Q\) such that \(f(a) = q a\), for all \(a \in A\).
\end{lemma}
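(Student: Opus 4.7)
The plan is to reduce everything to the single equation $f(a_{0}) = q a_{0}$ for a fixed witness $a_{0}$, and then propagate this relation to all of $A$ using $\Z$-linearity of group homomorphisms together with the fact that $A$ sits inside the field $\Q$.

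First I would dispose of the degenerate case: if $A = \{0\}$ the statement is vacuous, and if $f$ is identically zero one takes any convenient $q$ (the lemma is only used in the sequel for nontrivial $f$, so one can either tacitly assume $f \neq 0$ or loosen ``nonzero'' accordingly). So assume henceforth that $A$ contains some nonzero element and fix any $a_{0} \in A \setminus \{0\}$. Set
\[
q \coloneqq \frac{f(a_{0})}{a_{0}} \in \Q.
\]
The goal is to show that $f(a) = qa$ for every $a \in A$, and then note that $q \neq 0$ as soon as $f$ is nontrivial (otherwise $f$ would vanish on $a_{0}$, hence on all of $A$ by the same argument).

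The key step is the following. Since $A \subseteq \Q$ and $a_{0} \neq 0$, for any $a \in A$ the ratio $a/a_{0} \in \Q$ can be written as $m/n$ for some $m \in \Z$ and $n \in \Z \setminus \{0\}$. This rewrites as the identity $n a = m a_{0}$ holding inside the group $A$. Applying the group homomorphism $f$ and using that $\Z$-linearity is automatic for homomorphisms of abelian groups, we obtain
\[
n \, f(a) = f(n a) = f(m a_{0}) = m \, f(a_{0}).
\]
Dividing in $\Q$ (permissible because $B \subseteq \Q$ and $n \neq 0$) gives $f(a) = (m/n) f(a_{0}) = (a/a_{0}) f(a_{0}) = q a$, as desired. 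The main content is essentially this: any homomorphism between subgroups of $\Q$ is forced by its value at one point, because divisibility relations that hold in $\Q$ must be respected. There is no real obstacle here beyond this rank-$1$ rigidity, and the statement is an algebraic prelude to the order-preserving conclusion (where $q > 0$ will additionally follow from $f$ being order preserving).
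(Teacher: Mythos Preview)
Your proof is correct and follows essentially the same route as the paper: both exploit that any two nonzero elements of a rank-\(1\) group satisfy an integer relation \(na = m a_0\), apply \(\Z\)-linearity of \(f\), and divide in \(\Q\) to conclude that the ratio \(f(a)/a\) is constant. Your handling of the degenerate \(f \equiv 0\) case is in fact more careful than the paper's, which tacitly ignores it.
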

\begin{proof}
It suffices to show that \(f(a)/a = f(b)/b\) for all nonzero \(a,b\in A\).
Since \(A\) is rank \(1\), \(a\) and \(b\) are linearly dependent, that is, there are nonzero \(m,n \in \Z\) such that \(ma+nb = 0\).
Write \(b = -\frac{m}{n} a\). Then
\[
a f(b) = \frac{n}{n} a f(-\frac{m}{n} a) = \frac{1}{n} a f(-ma) = -\frac{m}{n} a f(a) = b f(a),
\]
where the second and third equalities follow from the fact that \(f\) is a homomorphism. Then the result follows.
\end{proof}

\begin{prop}
The relations \(\cong_{\mathsf{S}(\Q)}\) and \(\cong^o_{\mathsf{S}(\Q)}\) coincide, that is:
\(A\cong_{\mathsf{S}(\Q)} B\) if and only if \(A\cong^o_{\mathsf{S}(\Q)} B\), for all \(A,B\in \mathsf{S}(\Q)\).
%For all \(A,B\in \mathsf{S}(\Q)\), \(A\cong_{\mathsf{S}(\Q)} B\) if and only if \(A\cong^o_{\mathsf{S}(\Q)} B\), so that
%the following are equivalent:
%\begin{enumerate}
%\item
%\label{it : A1}
%\(A\cong_{\mathsf{S}(\Q)} B\);
%\item
%\label{it : A2}
%\(A\cong^o_{\mathsf{S}(\Q)} B\).
%\end{enumerate}
%In particular, 
%the relations \(\cong_{\mathsf{S}(\Q)}\) and \(\cong^o_{\mathsf{S}(\Q)}\) coincide.
\end{prop}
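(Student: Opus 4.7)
The plan is to prove only the nontrivial direction: if $A \cong_{\mathsf{S}(\Q)} B$ then $A \cong^o_{\mathsf{S}(\Q)} B$, since the converse is immediate. So fix a group isomorphism $f \colon A \to B$.

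By Lemma~\ref{lem : rank 1}, applied to $f$ as a group homomorphism from $A$ to $B$, there is a nonzero rational $q$ such that $f(a) = qa$ for every $a\in A$. Now I split into two cases according to the sign of $q$. If $q > 0$, then multiplication by $q$ manifestly preserves the ordering inherited from $\Q$, so $f$ itself witnesses $A \cong^o_{\mathsf{S}(\Q)} B$.

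If $q < 0$, the map $f$ is order reversing, so I cannot use it directly. Instead consider the map $g \colon A \to \Q$ defined by $g(a) = -f(a) = (-q)a$. Since $B$ is a subgroup of $\Q$, it is closed under additive inverses, so $g(A) = -f(A) = -B = B$; hence $g$ is a group homomorphism from $A$ onto $B$. It is injective because $-q \neq 0$, so it is a group isomorphism, and since $-q > 0$ it preserves the ordering on $\Q$. Thus $g$ witnesses $A \cong^o_{\mathsf{S}(\Q)} B$.

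There is essentially no obstacle here: the whole argument rests on the rigidity supplied by Lemma~\ref{lem : rank 1}, which reduces any group isomorphism between rank~$1$ subgroups of $\Q$ to scalar multiplication by a nonzero rational, and on the closure of any subgroup of $\Q$ under negation, which lets one flip the sign of the scalar when necessary.
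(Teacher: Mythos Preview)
Your proof is correct and follows essentially the same argument as the paper: invoke Lemma~\ref{lem : rank 1} to write the isomorphism as multiplication by a nonzero rational $q$, and if $q<0$ replace $f$ by $a\mapsto -f(a)$. Your additional remark that $-B=B$ (so that $g$ is surjective onto $B$) is a detail the paper leaves implicit.
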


\begin{proof}
The proposition follows from Lemma~\ref{lem : rank 1}. If \(A\cong_{\mathsf{S}(\Q)} B\) and \(f\colon A \to B\) is an isomorphism, then there is a nonzero \(q\in \Q\) such that \(f(a) = q a\) for all \(a \in A\). If \(q> 0\), then \(f\) is clearly order preserving. Otherwise \( f \) is order-reversing, and hence the map \(a\mapsto -f(a)\) is an order preserving isomorphism between \(A\) and \(B\).
\end{proof}

\begin{comment}
Let \(\{p_0, p_1, \dotsc\}\) be an enumeration of all prime numbers.
Let \(A\) be a subgroup of \(\Q\) and \(a\) any nonzero element of \(A\).
For a prime \(p\), the \emph{\(p\)-height} \(h_p(a)\) is the largest integer \(n\) such that the equation \(p^n x = a\) has a solution in \(A\). If no such \(n\) exists we set \(h_p(a) = \infty\). We call \emph{characteristic} of \(a\) the infinite sequence of \(p\)-heights
\[
\chi(a) = (h_{p_0}(a), h_{p_1}(a), \dotsc, h_{p_i}(a), \dotsc) \in (\N\cup\{\infty\})^\N
\]

Two characteristics \((k_i)_{i\in\N}, (\ell_i)_{i\in\N} \in (\N\cup\{\infty\})^\N\) are said to be \emph{equivalent} (in symbols, \((k_i)_{i\in \N}\equiv(\ell_i)_{i\in \N}\)) if they are eventually equal. The equivalence class of characteristics are called \emph{types}.

We denote the type of \(a\) by \(\boldsymbol{t}(a) = \chi(a)/{\equiv}\). It is immediate to check that \(\boldsymbol{t}(a) = \boldsymbol{t}(na)\) for all \(n\in \N\). Therefore,
for any torsion-free Abelian group \(A\) of rank \(1\) we have
\(\boldsymbol{t}(a) = \boldsymbol{t}(b)\) for all nonzero \(a,b \in A\). We set \(\boldsymbol{t}(A) = \boldsymbol{t}(a)\) for any nonzero \(a \in A\).

\begin{thm}[Baer~\cite{Bae}]
\label{thm : Baer}
For all torsion-free Abelian group of rank \(1\), \(A \cong B\) if and only if \(\boldsymbol{t}(A) = \boldsymbol{t}(B)\).
\end{thm}

\end{comment}

\section{Bounds for \(\cong_\mathsf{ArGp}\)} \label{sec:boundsforisomorphism}

Let \(\mathcal{L} = \{ {+}, {<} \}\) be the language for ordered groups.
We can define the Polish space \(X_\mathcal{L}\) of all \(\mathcal{L}\)-structures with domain \(\N\) in the usual fashion. (E.g., see~\cite[Chapter~2.3]{Hjo00}.)
Next, we define \(X_\mathsf{ArGp}\) as the subset of \(X_\mathcal{L}\) consisting of those structures satisfying the axioms of ordered Archimedean groups. One sees that \(X_\mathsf{ArGp}\) is \(G_\delta\), thus it is a Polish space with the topology induced by \(X_\mathcal{L}\). Also, let \(\cong_\mathsf{ArGp}\) 
%and \(\equiv_\mathsf{ArGp}\) 
be 
%respectively 
the isomorphism relation
%and the bi-embeddability relations 
on \(X_\mathsf{ArGp}\).

In view of H\"older's theorem (cf. Theorem~\ref{theorem : Holder}), it will be convenient to work also with a different coding space. For any set \(X\), denote by \(X^{(\N)}\) the set of injective sequences in \(X\). Then let
\[
\mathcal {A} \coloneqq \set{(x_{n})_{n\in \N}\in \R^{(\N)}}{x_{0}=0 \text{ and } (\{x_{n}\}_{n\in\N},+)< \R},
\] 
which is a closed subset of \(\R^{\N}\), thus a Polish space with the topology inherited from it.
With a slight abuse of notation, we make no distinction between sequences \(\vec{x} = (x_n)_{n\in \N}\) in \(\mathcal{A}\) and the groups enumerated by them.
We denote by \(\cong_{\mathcal{A}}\) 
%and \(\equiv_\mathcal{A}\) 
the isomorphism relation
% and bi-embeddability relations 
on \(\mathcal{A}\).
%, respectively.

Notice that
the two ways of coding countable Archimedean groups into a Polish space are equivalent. If \((x_{n})_{n\in \N}\) is in \(\mathcal{A}\), then we define a corresponding ordered-group structure \((\N, +^G, <^G)\) by setting \( n +^G m = k \iff x_{n} + x_{m} = x_{k}\) and \(n <^G m \iff x_{n} < x_{m}\), for every \(n,m\in \N\).
Conversely, we have the following proposition which together with the previous observation yields
\[ 
{\cong_\mathsf{ArGp}} \sim_B {\cong_\mathcal{A}}. 
%\qquad \text{and} \qquad
%{\equiv_\mathsf{ArGp}} \sim_B {\equiv_\mathcal{A}}.
 \]

\begin{prop}
\label{prop : continuous}
There is a continuous map \(X_\mathsf{ArGp} \to \mathcal{A}\) sending any \(G \in X_\mathsf{ArGp}\) to (an enumeration of) a group \(\vec{x}_G = (x_G(i))_{i\in \N}\) in \(\mathcal{A}\) isomorphic to \(G\).
In particular, for \(G,H \in X_\mathsf{ArGp}\) we have that \(G\) is isomorphic to (respectively, embeds into) \(H\) if and only if \( \vec{x}_G \) is isomorphic
to (respectively, embeds into) \( \vec{x}_H \).
\end{prop}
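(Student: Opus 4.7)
The plan is to carry out H\"older's embedding canonically from the structure on \(\N\). Given \(G = (\N, +^G, <^G) \in X_\mathsf{ArGp}\), the identity \(0^G \in \N\) is singled out by the finite condition \(0^G +^G 0^G = 0^G\), so \(G \mapsto 0^G\) is locally constant on \(X_\mathsf{ArGp}\). Since \(G\) is countably infinite and Archimedean, the set \(\{n \in \N \colon 0^G <^G n\}\) is nonempty; let the \emph{canonical unit} be
\[
u_G \coloneqq \min\{n \in \N \colon 0^G <^G n\},
\]
which is again locally constant in \(G\). Using \(u_G\) as reference element, the standard proof of Theorem~\ref{theorem : Holder} provides, for each \(n \in \N\) and each \(q \in \N^+\), the unique integer \(a_{q,G}(n) \in \Z\) with
\[
a_{q,G}(n) \cdot u_G \leq^G q \cdot n <^G (a_{q,G}(n)+1) \cdot u_G
\]
(where \(p \cdot g\) denotes the iterated sum in \(G\)), and the Cauchy sequence \((a_{q,G}(n)/q)_q\) converges to some \(\phi_G(n) \in \R\). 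The resulting map \(\phi_G \colon G \to \R\) is an order-preserving embedding with \(\phi_G(0^G) = 0\) and \(\phi_G(u_G) = 1\).

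To land in \(\mathcal{A}\) I will reindex. Let \(\sigma_G \colon \N \to \N\) be the unique bijection with \(\sigma_G(0) = 0^G\) that is order-preserving from \(\N \smallsetminus \{0\}\) onto \(\N \smallsetminus \{0^G\}\) (with respect to the natural order of \(\N\)), and set \(\vec{x}_G(i) \coloneqq \phi_G(\sigma_G(i))\). Then \(\vec{x}_G(0) = 0\), \(\vec{x}_G\) is an injective enumeration (as \(\phi_G\) is injective), and \(\{\vec{x}_G(i) \colon i \in \N\}\) is an additive subgroup of \(\R\) order-isomorphic to \(G\); hence \(\vec{x}_G \in \mathcal{A}\). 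The ``in particular'' clause is then immediate: since \(\vec{x}_G\) codes an isomorphic copy of \(G\) inside \(\R\), pushing forward isomorphisms and embeddings through \(\phi_G\) and \(\phi_H\) yields the required equivalences.

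For the continuity of \(G \mapsto \vec{x}_G\) into \(\R^\N\) (product topology), it suffices to check each coordinate. Fix \(i \in \N\), \(G_0 \in X_\mathsf{ArGp}\), and \(\epsilon > 0\), and pick \(q\) with \(1/q < \epsilon / 2\). Put \(m = \sigma_{G_0}(i)\) and \(a = a_{q,G_0}(m)\). The inequalities \(a \cdot u_{G_0} \leq^{G_0} q \cdot m <^{G_0} (a+1) \cdot u_{G_0}\) mention only finitely many values of \(+^{G_0}\) and \(<^{G_0}\), so they persist on a clopen neighbourhood \(U\) of \(G_0\); shrinking \(U\), we may further assume \(0^G\), \(u_G\), and \(\sigma_G(i)\) are constant on \(U\). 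Then \(a_{q,G}(m) = a\) throughout \(U\), hence \(|\phi_G(\sigma_G(i)) - \phi_{G_0}(m)| \leq 2/q < \epsilon\) on \(U\).

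The main delicate point is the canonical, locally constant choice of the reference unit \(u_G\): once this is fixed, all subsequent data---the bijection \(\sigma_G\) and the integers \(a_{q,G}(n)\)---become clopen in \(G\), and the continuity assertion reduces to the familiar observation that rational approximations to a H\"older embedding can be read off from only finitely many of the atomic relations of the ambient ordered group.
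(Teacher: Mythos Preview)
The proposal is correct and takes essentially the same approach as the paper: both construct the H\"older embedding using the least positive element of \(G\) as reference unit, and both prove continuity by observing that a rational approximation to \(x_G(t)\) is determined by finitely many atomic relations of \(G\). Your presentation differs only cosmetically---you use Cauchy sequences \(a_{q,G}(n)/q\) where the paper uses Dedekind cuts, and you reindex via \(\sigma_G\) where the paper simply assumes the neutral element is \(0\)---but the construction and the continuity argument are the same.
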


\begin{proof}
Every element of \(X_\mathsf{ArGp}\) is a structure of the form \(G = (\N, +^G, <^G)\) where \(+^G\) is a commutative group operation on the set of natural numbers, and \(<^G\) is an Archimedean order. To simplify our notation we shall assume that \(0\) is the neutral element without losing generality. If \(n\in\N\) and \(a\in G\) we shall use the notation \(na = a+^G\dotsb+^Ga\). (It is important to make distinction between \(+^G\) and \(<^G\) with the natural addition and the natural ordering on \(\N\).)

 First, set \(x_G(0) = 0\).
Next, let \(\ell\) be the least natural number such that \(0<^G\ell\).  For each \(0\neq t \in \N\), if \(0<^G t\), then define
\begin{align*}
    L^G_t &= \set{\frac{m}{n} \in \Q}{ {m \ell} <^G {nt}},\\
    R^G_t &= \set{\frac{m}{n} \in \Q}{ {m  \ell} \mathbin{\prescript{G}{}{\geq}} {n t}}. 
\end{align*}

\begin{proof}[Claim]
\((L^G_t, R^G_t)\) is a Dedekind cut in \(\Q^+\).
\renewcommand{\qed}{}
\end{proof}

\begin{proof}[Proof of the claim]
Since \(G\) is Archimedean it is clear that both \(L^G_t\) and \(R^G_t\) are nonempty, they are disjoint, and \(L^G_t\cup R^G_t = \Q^+\), since \(<^G\) is a total order.
To see that \(L^G_t\) is downward closed, let \(\frac{m}{n}\in L^G_t\) and let \(\frac{m'}{n'} < \frac{m}{n}\).
Since \(m\ell <^G nt\), we have \(mn'\ell <^G nn't\). By the assumption on \(\frac{m'}{n'}\), it follows that \(m'n\ell <^G nn't\). Thus, we have \(m'\ell <^G n't\), which implies that \(\frac{m'}{n'}\in L^G_t\), as desired.
\begin{comment}
To see that \(L^G_t\) ha no maximum, let \(\frac{m}{n}\) be any element in \(L^G_t\). We are going to find some \(\frac{m'}{n'} \in L^G_t\) such that \(\frac{m}{n}<\frac{m'}{n'}\).
If \(m\ell + \ell <^G nt\), then let \(m'=m+1\) and \(n'=n\). Otherwise, if \(nt \leq^G (m+1)\ell\), then \(nt-^Gm\ell\leq \ell\). By the Archimedean type property, there is some \(k\in \N\) such that \(\ell <^G k(nt-^Gm\ell) \). Therefore, \((km+1)\ell <^G knt\) and we can let \(m' = km+1\) and \(n' = kn\).
\end{comment}
\end{proof}
 
Set \(x_G(t) \coloneqq \sup L_t = \inf R_t\). If \(t<^G 0\), set \(x_G(t) = - x_G(-^Gt)\).
It is not hard to check that the map \(i\mapsto x_G(i)\) is an order preserving group isomorphism from \(G\) to \(\vec{x}_G = (x_G(i))_{i\in \N}\), regarded as a subgroup of \(\R\). 

It remains to show that the map \(G\mapsto \vec{x}_G\) is continuous. Since \(\mathcal{A}\) is given the product topology, it suffices to show that for all
nonzero \(t\in \N\) and \(r\in \R\) the sets
\[
U^t_{>r} = \set{G\in X_\mathsf{ArGp}}{ x_G(t) > r}\quad\text{and}\quad U^t_{< r} =  \set{G\in X_\mathsf{ArGp}}{ x_G(t) < r}
\]
are open.  Recall that a basis for the topology on \(X_\mathsf{ArGp}\) is given by the sets
\[
V_{H,F} = \set{G\in X_\mathsf{ArGp}}{ G \restriction F = H\restriction F}
\]
with \(H \in X_\mathsf{ArGp}\) and \(F\) a finite subset of \(\N\).

The case \( r = 0 \) is clear, as \( U^t_{>r} \) (respectively, \( U^t_{< r} \)) consists of those \( G \in X_\mathsf{ArGp} \) for which \( t >^G 0 \) (respectively, \( t <^G 0 \)). 

Let us now assume \( r > 0 \) and consider the case of \( U^t_{>r} \).
Given any \(H \in X_\mathsf{ArGp}\) with \(x_H(t) > r\), we need to find a finite set \( F \subseteq \N \)  such that \(x_G(t) > r\) for all \(G\in V_{H,F} \).
Since \( r \) is positive, for all \(G \in X_\mathsf{ArGp}\) with \( 0 <^G t \) we have

\begin{equation}
\label{eq : A_G(t)}
x_G(t) > r\quad \iff\quad  
\text{there is $\frac{m}{n} \in L_t^G$ such that $r<\frac{m}{n}$.}
\end{equation}
In particular, equation~\eqref{eq : A_G(t)} applies to \( H \), because  \( x_H(t) > r > 0 \) implies \( 0 <^H t \). Fix \(\frac{m_H}{n_H}\) as in \eqref{eq : A_G(t)} witnessing that \(x_H(t) > r\), and let \( \ell \) be the natural number used to compute \( x_H(t) \), namely the least natural number such that \( 0 <^H \ell \). By \( \frac{m_H}{n_H} \in L^H_t \) we have \( m_H \ell \leq^H n_H t \).
Define
\[t_i = \underbrace{t +^H \dotsb +^H t}_{i \text{ times}}\quad \text{and}\quad
s_j = \underbrace{\ell +^H \dotsb +^H \ell}_{j \text{ times}}
\] for \(i, j \in \N\), and let \(F\coloneqq \set{k\in \N}{k\leq \ell}\cup\set{t_i}{1\leq i\leq n_H}\cup \set{s_j}{ 1 \leq j\leq m_H}\).

We claim that such \( F \) works, i.e.\ that \(x_G(t)> r\) whenever  \(G \in V_{H,F}\).
Indeed, since \(G\restriction \set{k\in \N}{k\leq \ell}\) and \(H\restriction \set{k\in \N}{k\leq \ell}\), the natural number \(\ell\) is also the least such that \(0<^G\ell\). Moreover, for all \(1\leq i < n_H\) and \(1\leq j < m_H\) we have \(t_{i+1} = t +^H t_{i}\) and
\(s_{j+1} = \ell +^H s_j\). By definition all those sums are in \(F\), therefore
\[
t_{i+1} = {t_i} +^H t = {t_i} +^G t\quad \text{ and }\quad s_{j+1} = {s_j} +^H \ell = s_{j} +^G \ell
\]
because \( G \restriction F = H \restriction F \).
Also, since the order relations \(<^G\) and \(<^H\) agree on \(F\) we have
\[
m_H \ell \leq^G n_H t.
\]
It follows that \( \frac{m_H}{n_H} \in L^G_t\), and since \( r < \frac{m_H}{n_H} \) we get that \( \frac{m_H}{n_H} \) witnesses \(x_G(t) > r\), as desired.

As for the case of \( U^t_{<r} \) with \( r > 0 \), we fix \( H \in X_\mathsf{ArGp} \) with \( x_H(t) < r \) and distinguish two cases. If \( t <^H 0 \), then the neighborhood of \( H \) consisting of those \( G \in X_\mathsf{ArGp} \) with \( t <^G 0 \) is contained in \( U^t_{<r} \) and we are done. Otherwise, we argue as for \( U^t_{> r} \) but replacing \( L^G_t \) with \( R^G_t \) and reversing all inequalities in the appropriate way.

Finally, the case of a negative \( r \) is reduced the positive case by observing that \( U^t_{> r} \) is the union over all \( s \in \N \) of the open sets 
\[
\set{G \in X_\mathsf{ArGp}}{s = -^G t} \cap U^s_{< -r},
\] 
and analogously for \( U^t_{< r} \).
\end{proof}

Notice that Hion's Lemma implies that \(\cong_\mathcal{A}\) (and hence also \( \cong_\mathsf{ArGp} \))
% and \(\equiv_\mathcal{A}\) are 
is a Borel equivalence relation.
In fact, if \(A\) and \(B\) are countable ordered subgroups of \(\R\) and \(h\colon A \to B\) is an ordered preserving morphism, then \(f\) is a scalar multiplication by some \( \lambda \in \R^+\). More precisely, for any given \(a\in A\), we have \( \lambda = h(a)/a\) and this gives countably many possible choices for \( \lambda \).
Therefore, between any two countable subgroups of the reals there are at most countably many order preserving homomorphisms. Then \(\cong_\mathcal{A}\) is the projection of a Borel set with countable sections, and is thus Borel itself.
%, and the same applies to \( \equiv_\mathcal{A} \).

\begin{comment}
\begin{obs}
The relation \(\cong_\mathcal{A}\) is Borel, thus so is \(\cong_\mathsf{ArGp}\).
\end{obs}\todo{isn't this unnecessary as we soon show it is $\Sigma^0_4$?}
\begin{proof}
It suffices to observe that there are at most countably many order preserving homomorphism between any two countable subgroups \(A,B\) of \((\R,+)\). It will follow that \(\cong_\mathcal{A}\) is the projection of a Borel set with countable sections, thus is Borel.
By Hion's lemma (cf. Fact~\ref{lem : Hion}), each order preserving homomorphism \(h\colon A\to B\) is uniquely characterized by the ratio \(r = \frac{f(a)}{a}\), for any \(a\in A\). Hence the number of  homomorphisms is bounded by  \(|\set{b/a}{b\in B\text{ and }a\in A}| = \aleph_{0}\).
\end{proof}
\end{comment}

In the remainder of this section, using the theory of Hjorth, Kechris, and Louveau~\cite{HjoKecLou}
we leverage the descriptive theoretical analysis of \(\cong_\mathsf{ArGp}\) to find an upper bound for its complexity within the class of Borel equivalence relations.
First let us recall a definition of Friedman and Stanley~\cite{FriSta}.

If \(E\) is an equivalence relation on a standard Borel space \(X\) we can define the \emph{Friedman-Stanley jump} \(E^{+}\) on \(X^\N\), the space of countable sequences in \(X\), by declaring 
\[
(x_i)_{i\in \N} \mathrel{E}^+ (y_i)_{i\in \N} \iff \set{[x_i]_E}{i\in \N} = \set{[y_i]_E}{i\in \N}.
\]

For example, for all real valued sequences \((x_i)_{i\in \N}, (y_i)_{i\in\N}\), we have \mbox{\( (x_i)_{i\in\N} =^+_\R (y_i)_{i\in\N}\)} precisely when they enumerate the same subset of \(\R\). Therefore, countable sets of reals up to equality can be used as complete invariants for \(=^+_\R\).

More generally, one can recursively define for any countable ordinal \(\alpha\) the \( \alpha \)-th iterated jump \( =_\R^{\alpha+} \) of \( =_\R \) letting \(=_\R^{(\alpha+1)+}\) be \((=_\R^{\alpha+})^{+}\) and, 
for \( \gamma <\omega_1\) limit, defining \(=_\R^{\gamma +}\) as the product \(\prod_{\alpha<\gamma} {=_\R^{\alpha+}}\).
Notice that hereditarily countable subsets in \(\mathcal{P}^{1+\alpha}(\N)\) serve as complete invariants for \(=_\R^{\alpha+}\).

We call \emph{isomorphism relations} those equivalence relations that are defined as the model-theoretic notion of isomorphism \( \cong \) on the standard Borel space of countable models of an \(\mathcal{L}_{\omega_1\omega}\)-sentence in a given countable language. (Here \(\mathcal{L}_{\omega_1\omega}\) is the
infinitary version of first-order logic that allows formulas with countable conjunctions and disjunctions.) Equivalently, an isomorphism relation is the restriction of \( \cong \) to
some \( \cong \)-invariant Borel subset of \( X_\mathcal{L} \) for some countable first-order language \( \mathcal{L} \).
The hierarchy \(\set{=_\R^{\alpha+}}{\alpha<\omega_1}\) stratifies the class of Borel isomorphism relations in the following precise sense:

\begin{fact}
For all \(\alpha <\omega_1\), the following hold:
\begin{itemize}
    \item 
    \({=_\R^{\alpha+}} <_B {=_\R^{(\alpha+1)+}}\),
    \item
    There is a Borel isomorphism relation          \(\cong_{1+\alpha} \) such that \(=_\R^{\alpha+}\) is  Borel bi-reducible%
    \footnote{The gap in the indexing is due to the fact that when introducing the notation in terms of isomorphism relations, the authors of~\cite{HjoKecLou} considered the Friedman-Stanley hierarchy as starting from equality \( =_\N \) on natural numbers, so that \( {=^+_\N} \sim_B {=_\R} \), \( {=^{++}_\N } \sim_B {=_\R^+} \), and so on. Since for ease of notation we are instead starting from \( =_\R \) as the base of the hierarchy, we have a shift of one unit in the indexes, so that \( {=_\R^+} \sim_B {\cong_2} \), \( {=_\R^{++}} \sim_B {\cong_3} \), and so on.} 
    with  \(\cong_{1+\alpha}\).
\end{itemize}
Moreover, every Borel isomorphism relation is Borel reducible to \(\cong_\alpha\), for some \(\alpha \in \omega_1\).
\end{fact}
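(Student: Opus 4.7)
The plan is to prove the three assertions of the Fact separately. The Borel reducibility \({=_\R^{\alpha+}} \leq_B {=_\R^{(\alpha+1)+}}\) at successor stages is immediate from the ``diagonal'' map \(x \mapsto \langle x, x, x, \ldots\rangle\): the invariant of the constant sequence is the singleton \(\{[x]_{=_\R^{\alpha+}}\}\), so the map is a reduction. At limit stages \(\gamma\), each factor reduces via a coordinate projection. To realize each \(=_\R^{\alpha+}\) as an isomorphism relation \(\cong_{1+\alpha}\), I would proceed by transfinite recursion: at the base, take isomorphism on structures in the empty language with constants coding a real, which is essentially \({=_\R}\); at successor \(\alpha+1\), if \(\cong_{1+\alpha}\) is isomorphism on countable models of an \(\mathcal{L}_{\omega_1\omega}\)-sentence \(\varphi_\alpha\) in language \(\mathcal{L}_\alpha\), define \(\cong_{2+\alpha}\) as isomorphism on structures in \(\mathcal{L}_\alpha \cup \{E\}\) whose \(E\)-classes each carry a model of \(\varphi_\alpha\), the natural invariant being the countable set of \(\cong_{1+\alpha}\)-classes of those \(E\)-classes; at limit \(\gamma\), use a disjoint-union construction summing over \((\mathcal{L}_\alpha)_{\alpha < \gamma}\).

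The strict inequality \({=_\R^{\alpha+}} \not\geq_B {=_\R^{(\alpha+1)+}}\) is the substantive content and where I expect the main difficulty. I would use the classical Friedman--Stanley rank argument, refined via forcing absoluteness. The canonical complete invariants sit in \(\mathcal{P}^{1+\alpha}(\N)\) versus \(\mathcal{P}^{2+\alpha}(\N)\), and a putative Borel reduction \(f\), applied to a sufficiently generic sequence of mutually \(=_\R^{\alpha+}\)-inequivalent elements, would be forced by the symmetry group of the coding forcing to output an invariant depending on data of rank strictly greater than \(1+\alpha\), a contradiction. At \(\alpha = 0\) this reduces to the classical observation that no Borel map can select a real invariant from a countable set of reals equivariantly under \(S_\infty\); the general case is an inductive bootstrapping one level up, along the lines of the technique refined in Shani's work cited earlier.

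For the ``moreover'' statement, I would appeal to L\'opez-Escobar together with Scott analysis. Every Borel isomorphism relation is \(\cong\) restricted to the countable models of an \(\mathcal{L}_{\omega_1\omega}\)-sentence \(\varphi\), and Borelness of the restricted \(\cong\) forces the Scott ranks of those models to be bounded by some countable \(\beta\) (otherwise non-isomorphism would fail to be Borel on the class, by an unbounded-rank pumping argument). The Scott sentence of rank \(\leq \beta\) is then a complete Borel invariant that can be coded as a hereditarily countable set in \(\mathcal{P}^{1+\beta}(\N)\), giving the required Borel reduction to \({=_\R^{\beta+}} \sim_B {\cong_{1+\beta}}\).
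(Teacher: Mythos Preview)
The paper does not prove this statement; it is labeled a \texttt{Fact} and cited from the literature (Friedman--Stanley for the jump hierarchy and its strictness, Hjorth--Kechris--Louveau for the isomorphism-relation realizations and the bounded-rank clause). So there is no paper proof to compare against, and your task was really to reconstruct the standard arguments.

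Your sketch is broadly correct as such a reconstruction, with two small comments. First, the diagonal map $x\mapsto\langle x,x,\dotsc\rangle$ already handles \emph{every} instance of ${=_\R^{\alpha+}}\leq_B{=_\R^{(\alpha+1)+}}$, since by definition ${=_\R^{(\alpha+1)+}}=({=_\R^{\alpha+}})^+$ regardless of whether $\alpha$ is a successor or limit; your remark that ``at limit stages $\gamma$, each factor reduces via a coordinate projection'' is both unnecessary and phrased backwards (a projection from a product to a factor is a homomorphism, not a reduction). Second, for the strict non-reducibility the original Friedman--Stanley argument is a Baire-category / potential-complexity argument rather than the symmetric-model approach you outline; the forcing route you describe (in the style of \cite{Sha}) also works, but note that your phrasing is slightly inverted: the contradiction is that a rank-$(1+\alpha)$ invariant cannot determine a generic rank-$(2+\alpha)$ input, not that the output is forced to have high rank. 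Your Scott-analysis argument for the ``moreover'' clause is the standard one and is correct in outline.
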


\subsection{A lower bound for \(\cong_\mathsf{ArGp}\)}
Below we discuss why the relation of equality \(=^+_\R\) on countable subsets of reals is a natural lower bound for \(\cong_\mathsf{ArGp}\).

\begin{prop}
\label{prop : fields}
Suppose that \(A, B\) are subfields of \(\R\). Then, the following conditions are equivalent:
\begin{enumerate}
\item \label{itm:1}
\(A\subseteq B\);
\item \label{itm:2}
There is an order preserving (group) homomorphism \(h\colon A\to B\).
\end{enumerate}
Thus, \(A\) and \(B\) are bi-embeddable as ordered groups if and only if they are order isomorphic if and only if \(A = B\).
\end{prop}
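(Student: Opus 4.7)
The plan is to prove the nontrivial implication $(2) \Rightarrow (1)$ by invoking Hion's Lemma (Lemma~\ref{lem : Hion}) together with the fact that $B$ is closed under multiplicative inverses. The direction $(1) \Rightarrow (2)$ is immediate: inclusion $A \hookrightarrow B$ is an order preserving group homomorphism.

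For $(2) \Rightarrow (1)$, suppose $h \colon A \to B$ is an order preserving group homomorphism between the subfields $A$ and $B$ of $\R$, viewed as Archimedean groups. By Hion's Lemma there exists $\lambda \in \R^+$ with $h(a) = \lambda a$ for all $a \in A$. The key observation is that $1 \in A$ (since $A$ is a subfield), so $\lambda = \lambda \cdot 1 = h(1) \in B$. Because $B$ is a subfield of $\R$ and $\lambda > 0$, the inverse $\lambda^{-1}$ belongs to $B$. Then for every $a \in A$ we can write
\[
a = \lambda^{-1} h(a),
\]
and both factors lie in $B$, so $a \in B$. Hence $A \subseteq B$.

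For the second assertion, assume $A$ and $B$ are bi-embeddable as ordered groups. Applying the equivalence $(2) \Leftrightarrow (1)$ in both directions yields $A \subseteq B$ and $B \subseteq A$, so $A = B$; in particular they are order isomorphic via the identity. Conversely, order isomorphism trivially entails bi-embeddability, and $A = B$ trivially entails order isomorphism. There is no real obstacle here; the only subtle point is recognizing that the scalar $\lambda$ produced by Hion's Lemma must itself lie in the codomain, which is exactly what forces $A \subseteq B$ once we know $B$ is a field.
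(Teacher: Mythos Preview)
Your proof is correct and follows essentially the same approach as the paper: invoke Hion's Lemma to write $h(a)=\lambda a$, observe that $\lambda=h(1)\in B$, and then use that $B$ is closed under multiplicative inverses and products to conclude $a=\lambda^{-1}h(a)\in B$.
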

\begin{proof}
The nontrivial implication is from \eqref{itm:2} to \eqref{itm:1}. Assume that \(A,B\) are subfields of \(\R\) so that in particular \(1\in A,B\) and \(A,B\) are closed under inverse. 
By Hion's Lemma there is \(\lambda \in \R^+\) such that \(f(a) = \lambda a\) for all \(a\in A\). Clearly, when \(a = 1\) we obtain that \(f(1) = \lambda \in B\).
Therefore, for any \(a\in A\), we have \(a = f(a)\lambda^{-1}\), which is an element of \(B\) because \(B\) is closed under inverses and multiplication.
\end{proof}

For \(S \subseteq \R\), we denote by \(\Q(S)\) the subfield of \(\R\) generated by \(S\).
As usual, whenever \(S=\{x_{0},\dotsc,x_{n}\}\) is finite,  we write \({\displaystyle \Q(x_{1},\ldots ,x_{n})}\) instead of \( \Q(\{x_{1},\ldots ,x_{n}\})\). 

A set \(T\subseteq \R \) is called \emph{algebraically independent (over \(\mathbb{Q}\))} 
if \(P(t_1 , \dotsc, t_n )\neq \)0 for any \(t_1 , \dotsc, t_n \in T\) and nontrivial polynomial \(P\) with rational coefficients.
In particular, if \(T\) is algebraically independent each \(t \in T\) is transcendental over
\(\Q (T \smallsetminus \{t\})\).
In this case the field \(\Q(T)\) consists of all fractions of polynomials in finitely many variables taken from \(T\).

Recall that a subset of a topological space is \emph{perfect} if it is closed and has no isolated points (in its relative topology).  In particular, every perfect subset of a Polish space can be regarded as a Polish space with its relative topology. For our next results, we will use that every nonempty perfect Polish space has cardinality the continuum (e.g., see \cite[Corollary~6.3]{Kec}), thus is Borel isomorphic to \(\R\).
We point out the following consequence of a result of Mycielski~\cite{Myc64}, see also \cite[Theorem~7.5]{Wag}.

\begin{fact}
\label{fact : perfect}
There is a perfect set \(Y\subseteq \R\) such that for every finite sequence \(a_{0},\dotsc, a_{n}\in Y\), for every \(0\leq i\leq n\), the real
\(a_{i}\) is transcendental over \(\Q(a_{0},\dotsc a_{i-1},a_{i+1},\dotsc a_{n})\).
\end{fact}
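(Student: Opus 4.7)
The plan is to derive this from Mycielski's theorem on perfect independent sets, essentially following the reference to~\cite{Myc64,Wag} given in the statement.

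First I would recall the relevant form of Mycielski's theorem: given countably many meager sets $M_n \subseteq \R^{k_n}$ (of various arities $k_n$), there exists a perfect set $Y \subseteq \R$ such that for every $n$ and every tuple of pairwise distinct elements $a_1,\dots,a_{k_n}$ of $Y$, one has $(a_1,\dots,a_{k_n}) \notin M_n$. This is proved by a standard Cantor-scheme construction, building closed intervals $\{I_s : s \in 2^{<\N}\}$ with shrinking diameters, disjoint children, and at each stage ensuring that for every choice of $k_n$ pairwise distinct branches through the already-chosen intervals, a representative $k_n$-tuple avoids $M_n$; this is possible at each finite step because $M_n$ is meager and one has only finitely many constraints to satisfy.

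Next I would fix the relations that encode algebraic dependence. For each $n \geq 1$, let
\[
M_n = \bigcup_{p} \{(x_1,\dots,x_n) \in \R^n : p(x_1,\dots,x_n) = 0\},
\]
where the union ranges over all nonzero polynomials $p \in \Q[X_1,\dots,X_n]$. Since $\Q[X_1,\dots,X_n]$ is countable and the zero set of a nonzero real polynomial is a proper closed analytic subset of $\R^n$ (in particular closed and nowhere dense), each $M_n$ is a countable union of closed nowhere dense sets, hence meager (and $F_\sigma$).

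Applying Mycielski's theorem to the sequence $(M_n)_{n\geq 1}$ produces a perfect set $Y \subseteq \R$ such that no tuple of pairwise distinct points from $Y$ lies in any $M_n$. Unwinding the definition, this says exactly that every finite set of distinct elements of $Y$ is algebraically independent over $\Q$; equivalently, for any $a_0,\dots,a_n \in Y$ pairwise distinct, each $a_i$ is transcendental over $\Q(a_0,\dots,a_{i-1},a_{i+1},\dots,a_n)$. By shrinking $Y$ to an appropriate perfect subset if necessary, distinctness is automatic from the construction.

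The only mildly delicate step is the Mycielski construction itself, but since it is a classical black box (see~\cite[Theorem~7.5]{Wag}) and the meagerness of each $M_n$ is immediate from the algebraic description, there is no substantial obstacle. The small bookkeeping point to watch is that the Mycielski scheme must be set up to handle relations of all arities simultaneously (e.g.\ by a diagonal enumeration of pairs $(n,F)$ where $F$ is a finite antichain in $2^{<\N}$ of size $n$), which is routine.
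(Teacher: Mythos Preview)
Your proposal is correct and follows exactly the route the paper indicates: the paper does not give its own argument but simply records this as a consequence of Mycielski's theorem (citing \cite{Myc64} and \cite[Theorem~7.5]{Wag}), and you have unpacked that citation faithfully. The only cosmetic point is that your remark about ``shrinking $Y$'' so that ``distinctness is automatic'' is unnecessary and slightly confusing: the paper's statement is implicitly about tuples of distinct elements (otherwise it is trivially false), and Mycielski's theorem already delivers the conclusion for such tuples.
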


\begin{prop}
\label{prop : lowbnd}
\({=^+_\R} \leq_B {\cong_\mathcal{A}}\).
% and 
%\({=^+_\R} \leq_B {\equiv_\mathcal{A}}\).
\end{prop}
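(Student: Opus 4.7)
The plan is to combine Proposition~\ref{prop : fields} with the algebraically-independent perfect set \(Y\) from Fact~\ref{fact : perfect}, exploiting \(Y\) as a rigid parameter space so that the countable set \(\{x_i\}\) can be recovered from the subfield of \(\R\) it encodes. The informal idea is that an arbitrary set of reals is hard to recover from its \(\Q\)-span (there are many different \(\Q\)-bases), but if the generators are algebraically independent inside \(Y\), then intersecting the generated field with \(Y\) gives the generators back.

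Concretely, since \(Y\) is a nonempty perfect Polish space it is Borel isomorphic to \(\R\), so I would fix a Borel bijection \(\phi\colon \R \to Y\). Given \(\vec{x} = (x_i)_{i \in \N} \in \R^\N\), let \(F(\vec{x}) \coloneqq \Q(\phi(x_0), \phi(x_1), \dotsc)\) regarded as an ordered subgroup of \(\R\). A routine Borel procedure — list all rational polynomial expressions \(p/q\) in the variables \(\phi(x_0), \phi(x_1), \dotsc\), evaluate them whenever the denominator is nonzero, and successively discard duplicates — produces an injective enumeration of \(F(\vec{x})\) beginning with \(0\). This defines a Borel map \(f\colon \R^\N \to \mathcal{A}\), which will be my candidate reduction.

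The forward direction of the reduction is immediate: if the sets \(\{x_i\}\) and \(\{x'_j\}\) coincide then so do \(\{\phi(x_i)\}\) and \(\{\phi(x'_j)\}\), hence \(F(\vec{x}) = F(\vec{x'})\) and the two enumerations are \(\cong_\mathcal{A}\)-equivalent. For the converse, assume \(f(\vec{x}) \cong_\mathcal{A} f(\vec{x'})\). Since \(F(\vec{x})\) and \(F(\vec{x'})\) are subfields of \(\R\), Proposition~\ref{prop : fields} gives \(F(\vec{x}) = F(\vec{x'})\) as subsets of \(\R\). I then intersect with \(Y\) and invoke the following key identity:
\[
F(\vec{x}) \cap Y = \{\phi(x_i) : i \in \N\},
\]
from which the equality of the enumerated sets \(\{x_i\} = \{x'_j\}\) follows using the injectivity of \(\phi\).

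The only nontrivial verification is this identity, which is where Fact~\ref{fact : perfect} enters. The inclusion \(\supseteq\) is obvious. For \(\subseteq\), I would take \(z \in Y \cap F(\vec{x})\); by definition of \(F(\vec{x})\), \(z\) lies in \(\Q(\phi(x_{i_1}), \dotsc, \phi(x_{i_k}))\) for some finite subset of the generators, so there is a nontrivial polynomial relation over \(\Q\) witnessing this. If \(z\) were distinct from every \(\phi(x_{i_j})\), then \(z, \phi(x_{i_1}), \dotsc, \phi(x_{i_k})\) would be pairwise distinct elements of \(Y\), and Fact~\ref{fact : perfect} would make \(z\) transcendental over \(\Q(\phi(x_{i_1}), \dotsc, \phi(x_{i_k}))\), contradicting that relation. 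I do not foresee a serious obstacle: the only creative step is passing each real through \(\phi\) into \(Y\), and the remaining Borel-ness, algebraic-independence, and subfield verifications are routine.
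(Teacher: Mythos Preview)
Your proposal is correct and follows essentially the same approach as the paper: fix a Borel bijection from \(\R\) onto the algebraically independent perfect set of Fact~\ref{fact : perfect}, send a sequence to (an enumeration of) the subfield of \(\R\) it generates, and invoke Proposition~\ref{prop : fields} to conclude that distinct sets yield non-isomorphic ordered groups. The paper's proof is simply terser about the converse direction, while you make the key step explicit via the identity \(F(\vec{x}) \cap Y = \{\phi(x_i) : i \in \N\}\); this is exactly the content implicit in the paper's claim that ``sequences enumerating different sets'' are sent to ``different fields.''
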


\begin{proof}
Let \(T\subseteq \R\) be a perfect set as in Fact~\ref{fact : perfect} and let \(f\colon \R \to T\) be a Borel isomorphism. 
Then the map sending \((x_n)_{n\in \N} \in \R^\N \) into (an enumeration of) \( \Q(\set{f(x_n)}{n \in \N})\) sends sequences enumerating different sets to different fields, which are not isomorphic as ordered groups by Proposition~\ref{prop : fields}.
%For the same reason the map \((x_n)_{n\in \N}  \mapsto  \Q(\set{f(x_n)}{n \in \N})\) is also a reduction from \(=^+_\R\) to \(\equiv_\mathcal{A}\).
\end{proof}

\subsection{An upper bound for \(\cong_\mathsf{ArGp}\)}

First, we recall the following definition of Hjorth, Kechris, and Louveau~\cite[Section~1]{HjoKecLou}.

\begin{defn}[Hjorth-Kechris-Louveau \cite{HjoKecLou}]
Fix $n\geq 3$ and $0\leq k\leq n-2$.
Let $\mathcal{P}_\ast^{n,k}(\mathbb{N})$ be the collection of pairs $(A,R)$ such that:
\begin{enumerate}
    \item $A$ is a hereditarily countable set in $\mathcal{P}^n(\N)$;
    \item $R$ is a ternary relation on $A\times A\times(\mathcal{P}^k(\N)\cap \mathrm{tc}(A))$ such that
    \begin{itemize}
    \item for any $a,b\in A$ there is some $r$ such that $R(a,b,r)$ holds;
    \item  given any $a\in A$, for any $b,b'\in A$ and any $r$, if $R(a,b,r)$ and $R(a,b',r)$ both hold then $b=b'$.
    \end{itemize}
\end{enumerate}
The equivalence relation $\cong^\ast_{n,k}$ is defined as the isomorphism relation of countable structures coding pairs $(A,R)$ in $\mathcal{P}^{n,k}_\ast(\mathbb{N})$. In other words, $\cong^\ast_{n,k}$ is defined precisely so that it admits a natural classification with $\mathcal{P}_\ast^{n,k}(\mathbb{N})$ as a set of complete invariants. See \cite[p. 95, 98, 99]{HjoKecLou} for a precise presentation of $\cong^\ast_{n,k}$ on a Polish space. 
\end{defn}

Given an invariant $(A,R)$ for $\cong^\ast_{n,k}$, the set $A$ is an invariant for $\cong_n$, while $R$ provides a parametrization of $A$ using lower rank sets, therefore ``simplifying its complexity''. Thus the invariants in $\mathcal{P}^{n,k}_\ast(\mathbb{N})$ should be viewed as sets with intermediate complexity, between $\mathcal{P}^{n-1}(\mathbb{N})$ and $\mathcal{P}^n(\mathbb{N})$. Indeed, the equivalence relations of Hjorth-Kechris-Louveau refine the Friedman-Stanley hierarchy: 
\[
{=^{(n-1)+}_\R} \sim_B {\cong_{n}}\leq_B {\cong^\ast_{n+1,0}}\leq _B \dotsb \leq_B {\cong^\ast_{n+1,n-1}}\leq_B{\cong_{n+1}} \sim_B {=^{n+}_\R}.
\]

\begin{remark}\label{rmk : invs for 3,0}
Suppose $(A,R)$ is in $\mathcal{P}_\ast^{n,0}(\mathbb{N})$. Given $a\in A$, the relation $R$ allows to enumerate $A$, sending $b\in A$ to the smallest $j \in\mathbb{N}$ such that $R(a,b,j)$ holds.
Thus invariants for $\cong^\ast_{n,0}$ should be viewed as sets in $\mathcal{P}^n(\mathbb{N})$, which can be enumerated definably in a parameter.
\end{remark}

\begin{remark}
Suppose $(A,R)$ is in $\mathcal{P}^{n,k}_\ast$ and assume additionally that for any $a,b\in A$ there is a unique $r$ satisfying $R(a,b,r)$. Then we can view this invariant as a set $A$ in $\mathcal{P}^n(\mathbb{N})$ together with injective maps $R(a,\cdot ,\cdot)$ into the lower rank sets from $\mathcal{P}^k(\mathbb{N})$, definable uniformly in a parameter $a\in A$.
In the general case, instead, we just get injective maps $b\mapsto\set{r}{R(a,b,r)}$ from $A$ to $\mathcal{P}^{k+1}(\mathbb{N})$, with the additional property that any two distinct sets in the image are disjoint.
\end{remark}

\begin{prop}[Hjorth-Kechris-Louveau~\cite{HjoKecLou}]
\({{}=_\R^{+}} <_B {\cong^{*}_{3,0}} <_B {\cong^{*}_{3,1}} <_B {=^{++}_\R}\).
\end{prop}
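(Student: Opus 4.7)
The plan is to treat the three non-strict reductions and the three separations in turn, following the approach of Hjorth-Kechris-Louveau~\cite{HjoKecLou}.

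For the $\leq_B$ part, I would build Borel reductions that progressively enrich the natural invariants. To realize $=_\R^+ \leq_B \cong^*_{3,0}$, send a countable set of reals $S$ to the trivial pair $(A, R) \in \mathcal{P}^{3,0}_\ast(\N)$ with $A = \{S\}$ a singleton and $R(S,S,n)$ holding only for $n=0$; the resulting map is isomorphism-preserving since the singleton $A$ transparently recovers $S$. For $\cong^*_{3,0} \leq_B \cong^*_{3,1}$, one reinterprets an $\N$-indexed relation as a $\mathcal{P}(\N)$-indexed one via the singleton coding $n \mapsto \{n\}$, after enlarging $A$ to contain the coded naturals. For $\cong^*_{3,1} \leq_B =_\R^{++}$, forget $R$ and keep only the underlying $A \in \mathcal{P}^3(\N)$, which is already a complete $=_\R^{++}$-invariant since any bijection between two underlying sets can be enriched to respect some natural choice of $R$.

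For the strict separations, I would invoke the potential-class framework of HKL: each equivalence relation in the chain has a sharp potential Wadge complexity. In particular, $=_\R^+$ is potentially $\mathbf{\Pi}^0_3$, while $\cong^*_{3,0}$ and $\cong^*_{3,1}$ occupy two distinct intermediate levels strictly between $\mathbf{\Pi}^0_3$ and $\mathbf{\Pi}^0_4$, and $=_\R^{++}$ exceeds $\mathbf{\Pi}^0_4$. Since potential complexity is monotone under $\leq_B$, each inequality becomes strict once one verifies sharpness at the intermediate levels.

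The main obstacle is precisely this sharpness check, most delicately the middle separation $\cong^*_{3,0} <_B \cong^*_{3,1}$. Here one must exhibit a $\cong^*_{3,1}$-invariant whose $\R$-valued parametrization cannot be Borel-recoded as an $\N$-valued one, and HKL accomplish this via a Baire category construction on a carefully chosen orbit equivalence relation. The underlying generic-ergodicity idea is closely related to the symmetric-model arguments deployed later in Section~\ref{sec : irred to 3,0} for the main anti-classification theorem.
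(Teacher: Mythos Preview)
The paper does not prove this proposition; it is cited from \cite{HjoKecLou}, with the middle strictness further attributed to \cite[Section~6.2]{Sha}. Your sketch, however, contains two genuine errors.

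First, your reduction ${\cong^*_{3,1}} \leq_B {=_\R^{++}}$ by ``forgetting $R$'' is not a reduction: distinct pairs $(A,R)$ and $(A,R')$ with the same underlying $A$ are distinct complete invariants for $\cong^*_{3,1}$, so collapsing them fails injectivity on classes. The correct move is to code the full pair $(A,R)$ as a single hereditarily countable element of $\mathcal{P}^3(\N)$, using that $R$ is a countable subset of $A \times A \times \mathcal{P}(\N)$ whose triples can each be coded as a set of reals.

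Second, and more seriously, your separation strategy via potential classes cannot handle the middle inequality. As the paper records just before Proposition~\ref{prop : HjoKecLou}, both $\cong^*_{3,0}$ and $\cong^*_{3,1}$ have potential complexity exactly $\mathrm{D}(\boldsymbol{\Pi}^0_3)$; they do \emph{not} occupy distinct intermediate Wadge levels, and indeed no other potential class properly occurs between $\boldsymbol{\Pi}^0_3$ and $\boldsymbol{\Pi}^0_4$. Monotonicity of potential class under $\leq_B$ therefore says nothing about ${\cong^*_{3,0}} <_B {\cong^*_{3,1}}$, and a genuinely different argument---the original combinatorics in \cite{HjoKecLou}, or the symmetric-model methods of \cite{Sha} that you correctly allude to at the end---is required. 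Your final paragraph gestures in the right direction, but the earlier assertion that the two relations sit at distinct potential-complexity levels is false and undermines the stated plan for the separations.
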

See also \cite[Section 6.2]{Sha} for a proof that ${\cong^\ast_{3,1}}\not\leq_B{\cong^\ast_{3,0}}$ using the method we employ in Section~\ref{sec : irred to 3,0}.
Reference \cite{Sha} further establishes that the Hjorth-Kechris-Louveau hierarchy is strict. For example, ${\cong^\ast_{n,k}}<_B{\cong^\ast_{n,k+1}}$, whenever defined.

Next, we briefly recall the theory of potential complexity.

\begin{defn}[Louveau~\cite{Lou94}] \label{def : potential classes}
Let \(\boldsymbol{\Gamma}\) be a class of sets in Polish spaces that is closed under continuous preimages.
Suppose that \(E\) is an equivalence relation on a standard Borel space \(X\). We say that \(E\) is \emph{potentially in \(\boldsymbol{\Gamma}\)} if there exists a Polish topology \(\tau\) on \( X \) generating its Borel structure  such that \(E\) is in \(\boldsymbol{\Gamma}\) in the product space \((X\times X, \tau^{2})\).
\end{defn}

%\begin{remark}
By a standard change of topology argument (see e.g.~\cite[Section~13]{Kec}), \(E\) is potentially in \(\boldsymbol{\Gamma}\) if and only if there is a Polish space \(Y\) and an equivalence relation \(F\) on \(Y\) such that \(F\) is in \(\boldsymbol{\Gamma}\) (as a subset of \(Y^2\)) and \(E \leq_B F\).
%\end{remark}

Hjorth, Kechris, and Louveau \cite{HjoKecLou} completely classified the possible pointclasses $\boldsymbol{\Gamma}$ which can be realized as the optimal potential complexity of an isomorphism relation.
More precisely, we say that $\boldsymbol{\Gamma}$ is \textit{the potential complexity} of $E$ if $E$ is potentially $\boldsymbol{\Gamma}$ but $E$ is not potentially $\check{\boldsymbol{\Gamma}}$, where $\check{\boldsymbol{\Gamma}}$ is the class of complements of sets in $\boldsymbol{\Gamma}$.
Then the potential complexities of $=^+_\R$ and $=^{++}_\R$ are $\mathbf{\Pi}^0_3$ and $\mathbf{\Pi}^0_4$ respectively, while both $\cong^\ast_{3,0}$ and $\cong^\ast_{3,1}$ have potential complexity $\mathrm{D}(\mathbf{\Pi}^0_3)$, where $\mathrm{D}(\mathbf{\Pi}^0_3)$ is the class of differences of \(\mathbf{\Pi}^0_3\) sets.
%Furthermore, no other complexity class $\boldsymbol{\Gamma}$ between $\mathbf{\Pi}^0_3$ and $\mathbf{\Pi}^0_4$ can be realized as the optimal potential complexity of an isomorphism relation.

The following proposition summarizes some facts from~\cite[Theorem~4.1 and Corollary~6.4]{HjoKecLou} which will be relevant here.%
\footnote{Reference~\cite{HjoKecLou} contains similar results for $\cong_n$ and $\cong^\ast_{n,n-2}$ and further generalizations of these results through the countable ordinals.}

\begin{prop}
\label{prop : HjoKecLou}
Let \(E\) be an isomorphism relation.
%he a Bore1 equivalence relation induced by a Bore1 \(G\)-space, where \(G\) is a closed subgroup of \(S_\infty\).
Then
\begin{itemize}
    \item
    \(E\) is potentially \(\boldsymbol{\Pi}^0_3\) if and only if
    \(E\leq_B{=^+_\R}\).
    \item
    \(E\) is potentially \(\boldsymbol{\Sigma}^0_4\)  if and only if \(E \leq_B{\cong^*_{3,1}}\) if and only if $E$ is potentially~$\mathrm{D}(\boldsymbol{\Pi}^0_3)$.
\end{itemize}
Moreover, no other potential complexity properly occurs in between \( \boldsymbol{\Pi}^0_3 \) and \( \boldsymbol{\Pi}^0_4 \): if an isomorphism relation is potentially \( \boldsymbol{\Sigma}^0_4 \) but not potentially \( \boldsymbol{\Pi}^0_3 \), then its  potential class is $\mathrm{D}(\boldsymbol{\Pi}^0_3)$.
\end{prop}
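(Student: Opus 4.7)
The plan is to treat the two bi-implications separately, noting in each case that one direction is a routine computation while the reverse direction is where the substantive work of~\cite{HjoKecLou} lives.

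The easy directions come down to computing the descriptive complexity of the canonical targets. For $=^+_\R$ on $\R^\N$, equality of enumerated sets is expressed by the $\boldsymbol{\Pi}^0_3$ formula $\forall n\, \exists m\, (x_n = y_m) \wedge \forall n\, \exists m\, (y_n = x_m)$, so any isomorphism relation $E \leq_B {=^+_\R}$ is automatically potentially $\boldsymbol{\Pi}^0_3$. Similarly, $\cong^\ast_{3,1}$ sits inside $\mathrm{D}(\boldsymbol{\Pi}^0_3)$ on its standard Polish realization from \cite{HjoKecLou}, giving the corresponding upper bound on potential complexity for any $E \leq_B {\cong^\ast_{3,1}}$.

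For the hard direction, suppose $E$ is an isomorphism relation which is potentially $\boldsymbol{\Pi}^0_3$, and realize $E$ as the restriction of the logic action of $S_\infty$ on an invariant Borel class of $\mathcal{L}$-structures. By a change-of-topology argument in the spirit of~\cite[Sec.~13]{Kec}, one may refine the Polish topology so that $E$ is honestly $\boldsymbol{\Pi}^0_3$ while the logic action remains continuous. The central tool is then the theory of Vaught transforms applied to the back-and-forth relations $\equiv_\alpha$ from Scott analysis: a $\boldsymbol{\Pi}^0_3$ orbit equivalence relation of a logic action is forced to collapse to $\equiv_\alpha$ for $\alpha$ small enough that every equivalence class carries a Scott-type invariant coded by a countable set of reals, which assembles into a Borel reduction $E \leq_B {=^+_\R}$. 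The argument for $\cong^\ast_{3,1}$ runs in parallel, but with the richer invariants in $\mathcal{P}_\ast^{3,1}(\N)$ corresponding exactly to the jump from $\boldsymbol{\Pi}^0_3$ to $\mathrm{D}(\boldsymbol{\Pi}^0_3)$: one produces a parameterized enumeration of each orbit using lower-rank sets, whose definability matches $\mathrm{D}(\boldsymbol{\Pi}^0_3)$ on the nose.

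The main obstacle throughout is establishing these dichotomies --- that potential $\boldsymbol{\Pi}^0_3$ forces collapse to a low-rank back-and-forth relation, and more delicately that potential $\boldsymbol{\Sigma}^0_4$ for an isomorphism relation forces collapse to potential $\mathrm{D}(\boldsymbol{\Pi}^0_3)$ with no intermediate classes occurring. This is precisely the content of~\cite[Thm.~4.1]{HjoKecLou} and~\cite[Cor.~6.4]{HjoKecLou}, whose proofs rely on Vaught transforms and a fine rank analysis of back-and-forth equivalence at the level between $\omega$ and $\omega + \omega$. Granting these two dichotomies, the \emph{moreover} clause is immediate: any isomorphism relation strictly above potentially $\boldsymbol{\Pi}^0_3$ but still potentially $\boldsymbol{\Sigma}^0_4$ must have exact potential complexity $\mathrm{D}(\boldsymbol{\Pi}^0_3)$, since the classification leaves no other option.
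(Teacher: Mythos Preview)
The paper does not give its own proof of this proposition: it is stated as a summary of facts from~\cite[Theorem~4.1 and Corollary~6.4]{HjoKecLou} and used as a black box. Your proposal likewise defers the substantive dichotomies to those same two results, so in that sense you and the paper are aligned --- both treat the statement as a citation rather than something to be proved from scratch.

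The additional explanatory material you supply (the easy directions via direct complexity computation, and the high-level sketch invoking Vaught transforms and Scott analysis for the hard directions) is a reasonable gloss on what happens inside~\cite{HjoKecLou}, though a couple of points are imprecise. The claim that one can refine the topology so that $E$ becomes genuinely $\boldsymbol{\Pi}^0_3$ \emph{while the logic action remains continuous} is not quite how the argument runs; the Hjorth--Kechris--Louveau analysis works with the original logic action topology and extracts the reduction from the structure of the Vaught transforms and the back-and-forth hierarchy directly, rather than first changing topology on the model space. Also, the phrase ``$\alpha$ small enough that every equivalence class carries a Scott-type invariant coded by a countable set of reals'' is more heuristic than accurate --- the actual mechanism ties the potential class to a specific level of the $\equiv_\alpha$ hierarchy and then produces the invariants by an explicit Borel selection argument. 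None of this undermines your outline, since you correctly identify that the real work lives in the cited theorems; but if you want the sketch to be more than decorative, those two points would need tightening.
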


\begin{comment}
\begin{cor}
If \(E\) is potentially \(\boldsymbol{\Sigma}^0_4\), then \(E \leq_B{\cong^*_{3,1}}\).
\end{cor}

\begin{proof}
As Hjorth, Kechris and Louveau~\cite[Theorem~4.1(v)]{HjoKecLou} showed, if \(E\) is potentially \(D(\boldsymbol{\Pi}^0_3)\) then \(E\) is potentially \(\boldsymbol{\Sigma}^0_4\).
\end{proof}
\end{comment}

\begin{prop}
\label{prop : reduction to cong_3,1}
The isomorphism relation \(\cong_\mathsf{ArGp}\) is \(\boldsymbol{\Sigma}^0_4\), and thus \({\cong_\mathsf{ArGp}}\leq_B {\cong^{*}_{3,1}}\).
\end{prop}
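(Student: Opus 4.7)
The plan is to use Hion's Lemma to write $\cong_\mathcal{A}$ explicitly as a $\boldsymbol{\Sigma}^0_4$ relation. Since the map $G\mapsto\vec x_G$ of Proposition~\ref{prop : continuous} is continuous and satisfies ${G \cong_\mathsf{ArGp} H} \iff {\vec x_G \cong_\mathcal{A} \vec x_H}$, the same bound then pulls back to $X_\mathsf{ArGp}\times X_\mathsf{ArGp}$, and the Borel reduction to $\cong^*_{3,1}$ follows from Proposition~\ref{prop : HjoKecLou}.

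For the main step, I would use that every order preserving isomorphism between countable subgroups of $(\R,+,<)$ is multiplication by some scalar $\lambda\in\R^+$, and that such $\lambda$ is already determined by its action on any single nonzero element. Hence, for $\vec x=(x_n)_n, \vec y=(y_m)_m \in \mathcal{A}$ one has $\vec x\cong_\mathcal{A}\vec y$ if and only if
\[
\exists n_0, m_0 \in \N\ \bigl[\, x_{n_0}y_{m_0} > 0 \ \land\ \forall n\,\exists m\,(y_{m_0} x_n = x_{n_0} y_m) \ \land\ \forall m\,\exists n\,(y_{m_0} x_n = x_{n_0} y_m) \,\bigr],
\]
where the sign clause encodes that the implicit ratio $\lambda = y_{m_0}/x_{n_0}$ is well-defined and positive (in particular $x_{n_0},y_{m_0}\neq 0$), while the two universal-existential clauses together express $\lambda\{x_n:n\in\N\} = \{y_m:m\in\N\}$.

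From here the complexity calculation is routine: each atomic equation $y_{m_0}x_n = x_{n_0}y_m$ is closed in $\mathcal{A}\times\mathcal{A}$, each inner $\exists m$ contributes a $\boldsymbol{\Sigma}^0_2$ layer, each outer $\forall n$ a $\boldsymbol{\Pi}^0_3$ layer, their finite conjunction with the open sign clause remains in $\boldsymbol{\Pi}^0_3$, and the countable existential over the pairs $(n_0, m_0)$ finally lands in $\boldsymbol{\Sigma}^0_4$.

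I expect the only conceptual obstacle to be the first step, namely recognizing that Hion's Lemma collapses what looks prima facie like an analytic condition (``there exists an order preserving isomorphism $A\to B$'') into a countable existential over a pair of matching indices. Once that observation is in hand, the rest is standard quantifier bookkeeping in the Borel hierarchy.
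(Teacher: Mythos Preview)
Your proposal is correct and follows essentially the same argument as the paper: use Hion's Lemma to rewrite $\cong_\mathcal{A}$ as a $\boldsymbol{\Sigma}^0_4$ formula (a countable existential over a $\boldsymbol{\Pi}^0_3$ matrix), pull it back to $X_\mathsf{ArGp}$ via the continuous map of Proposition~\ref{prop : continuous}, and invoke Proposition~\ref{prop : HjoKecLou}. The only cosmetic difference is that the paper exploits the convention $x_0=0$ in $\mathcal{A}$ (so $x_1\neq 0$) to quantify over a single index $k$ with $\lambda=b_k/a_1$, whereas you quantify over a pair $(n_0,m_0)$; this changes nothing in the complexity count.
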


\begin{proof}
%By Proposition~\ref{prop : continuous} it suffices to show that \(\cong_\mathcal{A}\) is \(\boldsymbol{\Sigma}^0_4\). 
In view of Hion's lemma~\ref{lem : Hion}, whenever \(\phi\colon A\to B\) is an order preserving homomorphism, then \(\phi\) is the scalar multiplication by \(\phi(a)/a\), for any \(a\in A\).  So for every \((a_{n})_{n\in \N}\) and \((b_{n})_{n\in \N}\) in \(\mathcal{A}\),
\begin{multline}
\label{eq:complexity}
\tag{\(*\)}
(a_{n})_{n\in \N}\cong_\mathcal{A} (b_{n})_{n\in \N}\quad \iff\quad  \exists k\in \N\; \bigg( \frac{b_{k}}{a_{1}}>0\\
\text{ and }
 \forall n\in\N\, \exists \ell\in \N \bigg(b_{\ell} = \frac{b_{k}}{a_{1}} a_n\bigg)
\text{ and } 
 \forall \ell\in\N \, \exists n\in \N \bigg(b_{\ell} = \frac{b_{k}}{a_{1}} a_n
 \bigg)
\bigg).
\end{multline}
By equation~\eqref{eq:complexity} it is clear that \(\cong_\mathcal{A}\) is \(\boldsymbol{\Sigma}^{0}_{4}\), which yields that \(\cong_\mathsf{ArGp}\) is
\(\boldsymbol{\Sigma}^{0}_{4}\) by Proposition~\ref{prop : continuous}.
%The second part of the statement follows from Proposition~\ref{prop : HjoKecLou}.
\end{proof}

It follows from Proposition~\ref{prop : reduction to cong_3,1}, Theorem~\ref{thm : main} (which will be proved in the next section), and the preceding discussion, that the potential complexity of $\cong_\mathsf{ArGp}$ is precisely $\mathrm{D}(\mathbf{\Pi}^0_3)$. Furthermore, it is ``relatively complex'', for a $\mathrm{D}(\mathbf{\Pi}^0_3)$ relation, in the sense that it is not Borel reducible to $\cong^\ast_{3,0}$.

We conclude this discussion pointing out that a particular case of our analysis was already observed by Kechris~\cite[Section~8]{Kec99}. Since to the best of our knowledge no explicit proof of such result appeared in the literature, we include it here for the sake of completeness.
Let \(X_\mathsf{ArGp}^{*}\) be  the standard Borel space of countable Archimedean groups with a distinguished positive element, and let \(\cong^*_\mathsf{ArGp}\) be the corresponding isomorphism relation.

\begin{prop}[Kechris]
 \({\cong_\mathsf{ArGp}^{*}} \sim_{B} {=_\R^{+}}\).
\end{prop}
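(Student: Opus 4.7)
The plan is to establish both Borel reductions, each exploiting Hion's Lemma (Lemma~\ref{lem : Hion}) as a canonical normalization device: any order-preserving homomorphism between subgroups of \(\R\) is multiplication by a positive real, so once a positive element is distinguished and sent to \(1\), the homomorphism is uniquely determined.

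For \(\cong^*_\mathsf{ArGp} \leq_B {=_\R^+}\), I would adapt the embedding \(X_\mathsf{ArGp} \to \mathcal{A}\) of Proposition~\ref{prop : continuous} by using the distinguished positive element \(g^*\) of \((G, g^*) \in X^*_\mathsf{ArGp}\) as the reference instead of the least positive natural number: replace every occurrence of \(\ell\) in the Dedekind cuts defining \(L_t\) and \(R_t\) with \(g^*\), obtaining a Borel (indeed, continuous) map sending \((G, g^*)\) to an enumeration of a countable ordered subgroup of \(\R\) containing \(1\), via an embedding that sends \(g^* \mapsto 1\). If \((G, g^*) \cong^* (H, h^*)\) through an isomorphism \(\phi\), then \(\phi(g^*) = h^*\); composing with the embeddings into \(\R\) yields an order-preserving isomorphism between the two images that fixes \(1\). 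By Hion's Lemma this composition is the identity, so the two subgroups of \(\R\) coincide as sets and the enumerations represent the same element of \(\R^\N / {=_\R^+}\). The converse is immediate since any two enumerations of the same subgroup are \(\cong^*\)-equivalent.

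For \({=_\R^+} \leq_B \cong^*_\mathsf{ArGp}\), I would fix by Fact~\ref{fact : perfect} a perfect set \(Y \subseteq \R\) whose elements are algebraically independent over \(\Q\), together with a Borel isomorphism \(f \colon \R \to Y\). Then send \((x_n)_{n \in \N} \in \R^\N\) to (a Borel enumeration on \(\N\) of) the countable ordered subgroup \(V = \spann_\Q(\{1\} \cup \set{f(x_n)}{n \in \N})\) of \(\R\), with distinguished element \(1\). If \((x_n) =_\R^+ (y_m)\) the resulting structures are manifestly \(\cong^*\)-equivalent. Conversely, if they are \(\cong^*\)-equivalent, the same Hion's Lemma argument as in the first direction forces the two subgroups of \(\R\) to coincide. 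Now algebraic independence of \(Y\) over \(\Q\) implies that every finite subset of \(\{1\} \cup Y\) is \(\Q\)-linearly independent; hence any \(f(x_n) \in \spann_\Q(\{1\} \cup \set{f(y_m)}{m \in \N})\) must equal some \(f(y_m)\), for otherwise one would obtain a nontrivial \(\Q\)-linear relation among distinct elements of \(\{1\} \cup Y\). By injectivity of \(f\) and symmetry, \(\set{x_n}{n \in \N} = \set{y_m}{m \in \N}\). The only genuinely subtle point is this final ``algebraic independence forces unique generators'' step; the rest is bookkeeping around the normalization supplied by Hion's Lemma, which removes the global scaling ambiguity present in \(\cong_\mathsf{ArGp}\) and causes \(\cong^*_\mathsf{ArGp}\) to collapse exactly to equality of countable subsets of \(\R\).
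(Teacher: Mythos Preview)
Your proof is correct. Both directions work, and the core idea---using the distinguished positive element to pin down the H\"older embedding via Hion's Lemma so that the scaling ambiguity disappears---is exactly what the paper uses as well.

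That said, your execution differs from the paper's in a couple of places worth noting. For \({\cong^*_\mathsf{ArGp}} \leq_B {=_\R^+}\), you build an explicit Borel reduction by normalizing the H\"older embedding so that \(g^* \mapsto 1\) and then taking the underlying set. The paper instead argues indirectly: it observes that, once the distinguished element is fixed, the existential quantifier over the scalar in equation~\eqref{eq:complexity} vanishes, so \(\cong^*_\mathsf{ArGp}\) is \(\boldsymbol{\Pi}^0_3\), and then invokes the Hjorth--Kechris--Louveau machinery (Proposition~\ref{prop : HjoKecLou}) to conclude reducibility to \(=^+_\R\). Your route is more elementary and self-contained; the paper's route is shorter given the ambient theory already developed. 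For \({=_\R^+} \leq_B {\cong^*_\mathsf{ArGp}}\), the paper follows Proposition~\ref{prop : lowbnd} verbatim, mapping to the \emph{field} \(\Q(\{f(x_n)\})\) with distinguished element \(1\) and appealing to Proposition~\ref{prop : fields}; you instead use the \(\Q\)-span and argue directly from linear independence. Both are fine, but the field version lets you cite Proposition~\ref{prop : fields} rather than redo the linear-independence argument.
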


\begin{proof}
It can be shown that \({=_\R^{+} } \leq_B {\cong_\mathsf{ArGp}^{*}}\) as in Proposition~\ref{prop : lowbnd}. The only difference is that now
we specify that the distinguished positive element is always realized as \(1\).
Conversely,
arguing as in the proof of
Proposition~\ref{prop : reduction to cong_3,1}
it can be shown that \(\cong^{*}_\mathsf{ArGp}\)
is \(\mathbf{\Pi}^0_3\),
so that it is reducible to \(=^+_\R\) by Proposition~\ref{prop : HjoKecLou}.
Indeed, any isomorphism between two elements of \(X^*_\mathsf{ArGp}\) must match their distinguished elements, and from them we can recover the scalar determining the isomorphism as in Hion's lemma. Thus in equation~\eqref{eq:complexity} the first existential quatifier on \(k\) disappears, and the fraction \(\frac{b_k}{a_1}\) can be replaced by the constant ratio of the distinguished elements of the groups.
\end{proof}

%%%%%
\subsection{Invariants for \(\cong_{\mathsf{ArGp}}\)} \label{subsec : explicitinvariants}

Proposition~\ref{prop : reduction to cong_3,1} shows that \( \cong_{\mathsf{ArGp}} \) admits a complete classification using hereditarily countable sets of sets of reals (or, even more precisely, elements of \( \mathcal{P}_\ast^{3,1}(\mathbb{N}) \)) as invariants, but does not provide a constructive way to produce them. 
Below we explain how to explicitly compute such complete invariants for \(\cong_{\mathsf{ArGp}}\).
By H\"older's Theorem~\ref{theorem : Holder} it is enough to provide a classification for (countable) subgroups of \( \R \).

Given  $G\leq\mathbb{R}$ and $r\in\mathbb{R}\setminus\{0\}$, let $G/r=\set{g/r}{g\in G}$.
Define 
\begin{equation} \label{eq : A_G}
A_G=\set{G/r}{r\in G\setminus\{0\}}.
\end{equation}
We will often assume, without loss of generality, that $1\in G$, in which case $G\in A_G$.
Notice that \( G/r = G/(-r) \), hence  \( r \in G \) can be assumed to be strictly positive when considering \(G/r\).

\begin{prop}
\label{prop : invariants}
Let $G$ and $H$ be non-trivial subgroups of $\mathbb{R}$. Then $G$ and $H$ are order isomorphic if and only if $A_G=A_H$.
\end{prop}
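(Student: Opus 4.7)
The plan is to prove both directions by direct translation through Hion's Lemma (Lemma~\ref{lem : Hion}), which pins down the shape of any order-preserving map between subgroups of \(\R\) as scalar multiplication by a positive real.

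For the forward direction, suppose \( \phi \colon G \to H \) is an order preserving isomorphism. By Hion's Lemma, \(\phi(x) = \lambda x\) for some \(\lambda \in \R^+\), so \( H = \lambda G \). To show \( A_G \subseteq A_H \), take any \( r \in G \setminus \{0\} \) and observe
\[
G/r = (\lambda G)/(\lambda r) = H/(\lambda r),
\]
which lies in \( A_H \) since \( \lambda r \in H \setminus \{0\} \). The reverse inclusion is identical, using that every \( s \in H \setminus \{0\} \) has the form \( \lambda r \) for some \( r \in G \setminus \{0\} \).

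For the backward direction, assume \( A_G = A_H \). Pick any \( g_0 \in G \setminus \{0\} \). Then \( G/g_0 \in A_G = A_H \), so there is \( h_0 \in H \setminus \{0\} \) with \( G/g_0 = H/h_0 \), which rearranges to \( H = (h_0/g_0)\, G \). Setting \( \lambda = h_0/g_0 \), scalar multiplication by \( \lambda \) is a group isomorphism \( G \to H \); it is order preserving precisely when \( \lambda > 0 \).

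The only real point to watch is the sign issue, and it is handled by the symmetry \( G/r = G/(-r) \), valid because \( G \) is a subgroup and hence \( G = -G \). Using this, we may replace \( g_0 \) by \( -g_0 \) or \( h_0 \) by \( -h_0 \) without changing the equality \( G/g_0 = H/h_0 \), and thus arrange both to be positive, making \( \lambda > 0 \). This is the only subtle step; the rest is a direct unwinding of the definition of \( A_G \) together with Hion's Lemma.
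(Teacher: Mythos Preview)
Your proof is correct and follows essentially the same approach as the paper: both directions go through Hion's Lemma and the observation that \(G/r = H/s\) yields an order isomorphism via scalar multiplication, with the sign adjusted using \(G/r = G/(-r)\). If anything, your handling of the sign issue in the backward direction is more explicit than the paper's, which simply asserts that one may pick positive \(r \in G\) and \(s \in H\) with \(G/r = H/s\).
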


\begin{proof}
Assume first $A_G=A_H$. In particular, there are positive elements $r\in G$ and $s\in H$ such that $G/r=H/s$.
Note that $G$ is order isomorphic to $G/r$, via the map $g\mapsto g/r$. Similarly, $H$ is order isomorphic to $H/s$, and therefore to $G$.

Assume now that $G$ and $H$ are order isomorphic.
By symmetry it suffices to show that $A_G\subset A_H$.
By Hion's Lemma~\ref{lem : Hion}, there is a positive real $\lambda$ such that the map $g\mapsto \lambda g$ is an isomorphism between $G$ and $H$.
Given any $r\in G\setminus\{0\}$, let $s \coloneqq \lambda r \in H$. Now
\begin{equation*}
    H/s=\set{h/s}{h\in H}=\set{\lambda g / \lambda r}{g\in G}=\set{g/r}{g\in G}=G/r,
\end{equation*}
so $G/r\in A_H$ for every \( r \in G \setminus \{ 0 \} \). We conclude that $A_G\subset A_H$.
\end{proof}

%For $g\in\mathcal{A}$, let $G_g=\set{g(n)}{n\in\mathbb{N}}$, the subgroup of $\mathbb{R}$ enumerated by $g$.

Proposition~\ref{prop : invariants} provides the desired classification for Archimedean groups. Given such a group it is enough to realize it as a subgroup \( G \) of \( \R \), and then assign the invariant \( A_G \) to it. Moreover, one easily realizes that such invariant can be turned into a \( \cong^*_{3,1} \)-like invariant, as explained in the following remark.

%\begin{cor}\label{cor : complete classification of ArGp}
%The map $g\mapsto A_{G_g}$ is a complete classification by countable structures of $\cong_{\mathcal{A}}$.
%\end{cor}

\begin{remark}
\label{rem : invariants}
After applying the usual identification between \( \R \) and \( \mathcal{P}(\N) \), one sees that for any countable $G\leq\mathbb{R}$  the invariant $A_G$ is a hereditarily countable set in $\mathcal{P}^3(\mathbb{N})$. Moreover, it can be completed to a $\cong^\ast_{3,1}$-invariant as follows: define $R\subset A_G\times A_G\times \mathbb{R}$ by setting $R(H,K,r)$ if and only if $r\in H\setminus\{0\}$ and $K=H/r$.
Then $R$ satisfies that for any $H,K \in A_G$ there is \( r \in H \subseteq \R \cap \mathrm{tc}(A_G) \) such that \( R(H,K,r) \), and for any $H,K,K'\in A_G$ and any $r$, if $R(H,K,r)$ and $R(H,K',r)$ both hold then $K=K'$.
\end{remark}

We finally notice that the above classification procedure is Borel in the codes. Indeed, given \(\vec{x} = (x_n)_{n \in \N} \in \mathcal{A}\), we define the invariant \(A_{\vec{x}} = \set{A^{\vec{x}}_n }{ 0 \neq n \in \N}\) where \(A^{\vec{x}}_n = \set{ \frac{x_k}{x_n} }{ k \in \N }\). (Recall that \(x_{0} = 0\) for all \(\vec{x}\in \mathcal{A}\).) 
 By Proposition~\ref{prop : invariants} we have \(\vec{x} \cong_{\mathcal{A}} \vec{y} \iff A_{\vec{x}} = A_{\vec{y}}\).
Then, following Remark~\ref{rem : invariants}, we define the ternary relation
\(R_{\vec{x}}\) on
\(A_{\vec{x}} \times A_{\vec{x}} \times (\R \cap \mathrm{tc}(A_{\vec{x}}))\) by declaring 
\[
(A_n,A_m,r) \in R_{\vec{x}} \iff r= \frac{a_m}{a_n}.
\]
Since the third component of \(R_{\vec{x}}\) is an element of \(\R \) and by definition \(\frac{a_m}{a_n} \in A_n \subseteq \tc(A_{\vec{x}})\), under the usual identification of \( \R \) with \( \mathcal{P}(\N ) \) the pair \((A_{\vec{x}}, R_{\vec{x}})\) can be construed as an element of  \(\mathcal{P}_{*}^{3,1}(\N)\), and is still an invariant for \(\cong_\mathcal{A}\).
It follows that the Borel map on \( X_{\mathsf{ArGp}} \) 
\begin{equation} \label{eq : AxG}
G \mapsto \vec{x}_G \mapsto A_{\vec{x}_G}
 \end{equation}
witnesses \({\cong_\mathsf{ArGp}} \leq_B {=^{++}_\R}\), while the Borel map
\begin{equation}
G \mapsto \vec{x}_G \mapsto (A_{\vec{x}_G}, R_{\vec{x}_G})
\end{equation}
witnesses  \({\cong_\mathsf{ArGp}} \leq_B {\cong^*_{3,1}}\).

%%%%%%

%%%%%%%%%%%%-----------------Section 4-----------------%%%%%%%%%%%%%

\section{Irreducibility to $\cong^\ast_{3,0}$}
\label{sec : irred to 3,0}
In this section we prove Theorem~\ref{thm : main}, 
%and Theorem~\ref{thm : main-embed}, 
namely that $\cong_\mathsf{\mathrm{ArGp}}$
% and $\equiv_{\mathsf{ArGp}}$ are 
is not Borel reducible to $\cong^\ast_{3,0}$. In particular, it will follow that
\[
{=^+_\R} <_B {\cong_\mathsf{ArGp}} <_B {=^{++}_\R}.
% \qquad \text{and}
%\qquad
%{=_\R^+}  <_B {\equiv_\mathsf{ArGp}} <_B {E_{=_\R^{2\text{-}\mathrm{cf}}}}.
\]
The proof uses methods from set theory, such as forcing over choiceless models of ZF set theory. We start by providing context with an informal overview of our strategy.

The technique we are going to describe was introduced by Shani~\cite{Sha}. Let \(V\) be the model of \(\mathsf{ZFC}\) we are working in. Suppose that \(E\) and \(F\) are Borel equivalence relations on standard Borel spaces \(X\) and \(Y\). Suppose further that we have a ``reasonable'' assignment of complete invariants \(x \mapsto A_x\) and \(y \mapsto B_y\) for \(E\) and \(F\), respectively. Here ``reasonable'' means that 
\begin{itemize}
\item
the invariants are hereditarily countable sets;
\item
the assignment is definable in \(V\) through a first-order formula \(\phi\) in the language of set theory;
\item
if \(M \supseteq N\supseteq V\) are models of \(\mathsf{ZF}\), then  \(\phi\) still defines a complete classification of \(E\) in both \(M, N\), and \(N\) and \(M\) compute (using \(\phi\)) the invariant \(A_x\) in the same way for every \(x \in X\) which belongs to \(N\).
\end{itemize}

We call such complete assignments of invariants \emph{absolute classifications by hereditarily countable sets}. Examples of such classifications that are relevant to this paper are:
\begin{itemize}
\item
the classification for \(\cong_\mathsf{ArGp}\) via the map
 \(G \mapsto A_{\vec{x}_G}\) from equation~\eqref{eq : AxG};
 \item
the classification for \(=^+_\R\) via the map \(y = (y_n)_{n \in \N} \mapsto B_y = \set{y_n}{ n \in \N}\).
%\item
%invariants for \(\cong^*_{3,0}\) see [ref(???)]
\end{itemize}

%\begin{remark}[Notation]
%Throughout this section, when \(G\) is a countable subgroup of \(\R\) we confuse \(G\) with any of its enumerations. In particular we will say \(G \in \mathcal{A}\) with a slight abuse of language, and we will denote by \(G/r\), for some \(r \in G\), the set
%\(\set{x(n)/x(k) }{n \in \N }\) where \((x_{n})_{n\in\N} \in \mathcal{A}\)  is any enumeration on \(G\) and \(k\in \N\) is the only index  such that \(x(k) = r\).
%\end{remark}
%
%For any countable subgroup \(G\) of \(\R\) let \(A_G=\set{G/r}{r \in G\setminus\{0\}}\).
%The map \(G\mapsto A_{G}\) is a complete classification for \(\cong_\mathcal{A}\) by 
%Proposition~\ref{prop : invariants}.

Shani~\cite[Lemma~3.6]{Sha} proved that if \(E\) and \(F\) are as above and there is a Borel reduction \(f \colon X \to Y\) of \(E\) to F, then for every \(x \in X\) the invariants \(A_x\) and \(B_{f(x)}\) are ``interdefinable'' in the following technical sense:
\begin{itemize}
\item
\(V(A_x) = V(B_{f(x)})\), where \(V(A_x)\) is the minimal transitive model of \(\mathsf{ZF}\) containing the ground model \(V\) and \(A_x\), and similarly for \(V(B_{f(x)})\). %Equivalently, if a set X is definible using as parameters A, elements in its transitive closure, and elements from V, then X is definibale in a similar fashion but using B instead of A, and viceversa (see Fact 4.1).
\item
\(B_{f(x)}\) is definable in \(V(A_x)\) from \(A_x\) and parameters in \(V\) alone.
\end{itemize}
Moreover, the same remains true if we move to any generic extension of the ground model \(V\).

The above result yields a powerful technique to show that \(E \not\leq_B F\): if we can construct via forcing a new element \(x \in X\) such that its invariant \(A_x\) is more complicated than all invariants \(B_y\) for \(F\) (more precisely: \(V(A_x) \supsetneq V(B_y)\) for every \(B_y\)  definable in \(V(A_x)\) from \(A_x\) and parameters in \(V\) alone), then \(E \not\leq_B F\).

In this section, we will first apply this machinery to show that \(\cong_\mathsf{ArGp}\) is not Borel reducible to \(=^+_\R\) (Corollary~\ref{cor : not reducible =^+_R}), in fact replacing the former with \(\cong_{\mathcal{A}}\) (which, as shown in Section~3, is Borel bi-reducible with \(\cong_\mathsf{ArGp}\)). For such Borel equivalence relations, we consider the absolute classifications by hereditarily countable sets \(G \mapsto A_{\vec{x}_G}\) and \(y \mapsto B_y\) described above, although to simplify the notation the former will be presented just as \( G \mapsto A_G \), where \( A_G \) is as in equation~\eqref{eq : A_G}. Then we will design a suitable forcing \(\mathbb{P}\) so that any \(\mathbb{P}\)-generic filter \(F\) generates a subgroup \(G\) of \(\R\), and prove that \(V(A_G) \neq V(B)\) for any set of reals \(B \in V(A_G)\) that is definable from \(A_G \) and parameters in \(V\) alone. By virtue of the above discussion, this implies 
\[
{\cong_\mathsf{ArGp}} \sim_B {\cong_{\mathcal{A}}} \not \leq_B {=^+_\R}.
\] 

Using a similar strategy, we will then strengthen this result to \({\cong_\mathsf{ArGp} }\not\leq_B {\cong^*_{3,0}}\) (Theorem~\ref{thm : main}), exploiting the fact that, as noticed in Remark 3.7, any complete invariant for \(\cong^*_{3,0}\) can be described as a set of sets of reals which can be enumerated in a definable way using a parameter from the set itself. 
%Finally, we will show that a minor modification in the construction actually yields \({\equiv_\mathsf{ArGp}} \not\leq_B {\cong^*_{3,0}}\) (Theorem~\ref{thm : main-embed}).

\subsection{Background}

For the rest of Section~\ref{sec : irred to 3,0} we assume a certain familiarity with set theory and, in particular, with the basics of the forcing technique. We begin with some specific background material that we will use, referring the reader to \cite{Jec03} for more on the subject.

Throughout this section \(V\) denotes a model of \(\mathsf{ZFC}\). Suppose $A$ is a set in some generic extension of $V$.
Then there exists a
\emph{minimal transitive model of} \(\mathsf{ZF}\) {containing $V$ and $A$}, denoted by $V(A)$.
%\textbf{minimal transitive model of ZF containing $V$ and $A$, denoted $V(A)$}.
All assertions below regarding Borel reductions are absolute. Therefore it suffices to consider $V=L$, G\"odel's constructible universe;
in this case $V(A)$ is the usual Hajnal relativized $L$-construction, $L(A)$. (Often times, as will be the case here, the axiom of choice fails in $V(A)$.)

Working in $V(A)$, we can consider the submodel of all sets which are hereditarily definable using $A$ and parameters from $V$. This is again an extension of $V$ which is a model of \( \mathsf{ZF} \) and contains $A$, and therefore is equal to $V(A)$ by minimality.
\begin{fact}
\label{fact;definability-V(A)}
The following holds in $V(A)$.
For any set $X$, there is some formula $\psi$, parameters $\bar{a}$ from the transitive closure of $A$ and $v\in V$ such that $X$ is the unique set satisfying $\psi(X,A,\bar{a},v)$.
Equivalently, there is a formula $\varphi$ such that $X=\set{x}{\varphi(x,A,\bar{a},v)}$.
In this case we say that $\bar{a}$ is a \emph{definable support} for $X$.
%\textbf{$\bar{a}$ is a definable support for $X$}.
\end{fact}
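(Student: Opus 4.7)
The plan is to use Hajnal's relativized constructibility hierarchy: set \(L_0(V,A) = V \cup \tc(\{A\})\), \(L_{\alpha+1}(V,A) = \mathrm{Def}(L_\alpha(V,A))\), and take unions at limit stages, so that by minimality \(V(A) = \bigcup_{\alpha \in \mathrm{Ord}} L_\alpha(V,A)\). The whole sequence \(\seq{L_\alpha(V,A) : \alpha \in \mathrm{Ord}}\) is definable inside \(V(A)\) from the single parameter \(A\) (with the ground model \(V\) referenced through a predicate, which is legitimate since \(V\) is a definable class of \(V(A)\)).

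The core of the argument is a transfinite induction on \(\alpha\) showing that every \(X \in L_\alpha(V,A)\) admits a definable support. For \(\alpha = 0\), \(X\) is either in \(V\) (in which case take \(v \coloneqq X\) and \(\psi(Y,A,\bar a,v) \equiv Y = v\)), or equal to \(A\) itself (already an explicit parameter of \(\psi\)), or in \(\tc(A)\) (in which case \(X\) is a legitimate element of \(\bar a\)). At successor stages, \(X \in L_{\alpha+1}(V,A)\) has the form \(\set{y \in L_\alpha(V,A)}{\varphi(y, p_1, \dotsc, p_n)}\) for some formula \(\varphi\) and parameters \(p_i \in L_\alpha(V,A)\). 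By the inductive hypothesis each \(p_i\) is definable from \(A\), some elements of \(\tc(A)\), and a parameter in \(V\); concatenating all the \(\tc(A)\)-parameters and using Kuratowski pairing inside \(V\) to amalgamate the \(V\)-parameters into a single \(v\), one assembles a formula \(\psi\) that defines \(X\) from \(A\), the accumulated \(\tc(A)\)-parameters, and \(v\), by folding in the definable relation \(y \in L_\alpha(V,A)\). Limit stages are immediate since every set appearing at a limit level already occurs at some earlier stage.

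The only subtlety is that the amalgamation-of-formulas step must be carried out internally in \(V(A)\): the map sending the stage-\(\alpha\) syntactic data of \(X\) to the resulting formula \(\psi\) needs itself to be expressible in \(V(A)\). This is where one invokes the uniform definability of \(\seq{L_\alpha(V,A) : \alpha \in \mathrm{Ord}}\) from \(A\) together with the recursion theorem in \(V(A)\). The equivalent set-builder form follows at once by taking \(\varphi(x, A, \bar a, v)\) to assert ``\(x \in X\)'', where \(X\) is the unique witness to \(\psi(X, A, \bar a, v)\).
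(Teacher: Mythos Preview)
Your argument via the relative constructibility hierarchy is correct and is a standard way to establish this fact. The paper itself does not give a detailed proof: it states the fact as background and justifies it in one sentence immediately preceding it, via minimality --- the class of sets in $V(A)$ that are hereditarily definable from $A$ and parameters in $V$ (and, implicitly, $\tc(A)$) is a transitive model of $\mathsf{ZF}$ containing both $V$ and $A$, hence coincides with $V(A)$. Your approach instead builds $V(A)$ from below via the hierarchy $L_\alpha(V,A)$ and inducts on the stage, which makes the assembly of the defining formula for $X$ explicit; the paper's minimality argument is slicker but less constructive.

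One small remark on your final paragraph: the induction on $\alpha$ can be carried out externally, so you do not literally need the map from syntactic data to $\psi$ to live inside $V(A)$. What is genuinely required is that each individual $L_\alpha(V,A)$ be definable in $V(A)$ from $A$ and the ordinal $\alpha$ (which is then absorbed into $v$), so that the clause ``$y \in L_\alpha(V,A)$ and $L_\alpha(V,A)\models\varphi(y,\bar p)$'' is expressible by a single formula of $V(A)$. This is exactly what the uniform definability of the hierarchy provides, and as you note it rests on $V$ being a definable class of $V(A)$ --- immediate under the paper's standing reduction to $V=L$.
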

We will be particularly interested in sets with empty definable support, that is, those definable from $A$ and parameters in $V$ alone.

%\subsection{Classifications by countable structures}

%Recall that an equivalence relation $E$ is \emph{classifiable by countable structures} if it is Borel reducible to the isomorphism relation on the space of models of a countable language (see \cite[12.3]{Kan08}, \cite{Gao09}, \cite{Hjo00}).
%Such equivalence relation $E$ admits a definable complete classification $x\mapsto A_x$ with hereditarily countable sets as complete invariants.
%
%From the forcing point of view, the crucial property of these complete classifications $x\mapsto A_x$, using hereditarily countable structures, is that the map is definable in an absolute way.
%That is, the map $x\mapsto A_x$ is definable via some set theoretic formula such that
%\begin{itemize}
%    \item it remains a complete classification in any model of ZF and
%    \item given $V\subseteq M$ and $x\in V\cap X$, the calculation of $A_x$ in $V$ and $M$ is the same.
%\end{itemize}
%
%We will say that $x\mapsto A_x$ is a \emph{complete classification by countable structures} for $E$ if it satisfies the above two conditions.
%For example, $=^+_\R$ on $\mathbb{R}^\mathbb{N}$ is classified by $x\mapsto \set{x(n)}{n\in\mathbb{N}}$.

%%%%%%%%

%%%%%%%%%%%%

As discussed above, a key ingredient of our technique is the following:

\begin{thm}[Shani~{\cite[Lemma~3.6]{Sha}}]\label{thm : symmetric models irreducibility}
Suppose that \(E\) and \(F\) are Borel equivalence relations on standard Borel spaces \(X\) and \(Y\), respectively, and \(x\mapsto A_x\) and \(y\mapsto B_y\) are absolute classifications by hereditarily  countable sets. Assume further that $f\colon X\to Y$ is a Borel reduction of $E$ to $F$.
Let \(x\) be an element of \( X \) in some generic extension of \( V \), and let \(A = A_x\) and \(B = B_{f(x)}\).
Then \(V(A) = V(B)\), and $B$ is definable in $V(A)$ from $A$ and parameters in $V$ alone.
\end{thm}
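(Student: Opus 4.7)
The proof rests on absoluteness of the Borel ingredients plus a single uniqueness computation. Because \(f\) is Borel it is coded by some \(c_f \in V\) and extends absolutely to any transitive \(\mathsf{ZF}\)-model \(M \supseteq V\); the Borel relations \(E\), \(F\) and the absolute assignments \(x \mapsto A_x\), \(y \mapsto B_y\) are likewise given by formulas (with parameters in \(V\)) whose values persist across such \(M\). In particular the statement ``\(f\) is a reduction of \(E\) to \(F\)'' transfers from \(V\) to any such \(M\).

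\emph{Uniqueness lemma.} For any transitive \(\mathsf{ZF}\)-model \(M \supseteq V\) and any \(x', x'' \in X \cap M\), if \(A_{x'} = A_{x''}\) then by the absoluteness above \(M \models x' \mathrel{E} x''\), hence \(M \models f(x') \mathrel{F} f(x'')\), and consequently \(B_{f(x')} = B_{f(x'')}\). Since \(f\) is a reduction, the converse implication holds as well: \(B_{f(x')} = B_{f(x'')}\) forces \(A_{x'} = A_{x''}\).

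\emph{Defining \(B\) from \(A\) in \(V(A)\).} Working inside \(V(A)\), set
\[
B^* = \bigl\{ z : \exists\,\mathbb{P}\ \exists\,\dot{x}' \in V(A)^{\mathbb{P}} \ \text{with}\ \Vdash_{\mathbb{P}} \bigl( A_{\dot{x}'} = \check{A} \wedge \check{z} \in B_{f(\dot{x}')} \bigr) \bigr\}.
\]
Existence of such a pair \((\mathbb{P}, \dot{x}')\) is obtained by Levy-collapsing \(|\tc(A)|\) to \(\aleph_0\) inside \(V(A)\), which produces a generic extension large enough to contain a realization of the invariant \(A\) (for instance, a copy of the original \(x\)). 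Uniqueness follows from the uniqueness lemma applied in that extension: any two witnesses compute \(B_{f(\dot{x}')} = B\). Hence \(B^* = B\), and the definition uses only \(A\) together with the \(V\)-parameters \(c_f\), \(\varphi_A\), \(\varphi_B\) and the forcing apparatus of \(V\). This shows \(B \in V(A)\) and that \(B\) is definable in \(V(A)\) from \(A\) and parameters in \(V\) alone. Interchanging the roles of \((E,A)\) and \((F,B)\), and invoking the converse direction of the uniqueness lemma, yields \(A \in V(B)\), so \(V(A) = V(B)\).

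\emph{Main obstacle.} The delicate point is guaranteeing the existence of a \(\mathbb{P}\)-name in \(V(A)\) realizing the invariant \(A\): the original element \(x\) sits in \(V[G]\) for a generic \(G\) over \(V\), not directly over \(V(A)\), so one must absorb \(V[G]\) into a forcing extension of \(V(A)\) via a sufficiently large Levy collapse (or an equivalent maneuver) in order to produce the witness. Once this absorption is in place, the uniqueness lemma handles everything else uniformly.
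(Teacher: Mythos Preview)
The paper does not prove this theorem; it is quoted from \cite[Lemma~3.6]{Sha} and used as a black box, so there is no in-paper argument to compare against. Your outline is the standard one and captures the two essential ideas correctly: the uniqueness lemma (that $A_{x'}=A_{x''}$ forces $B_{f(x')}=B_{f(x'')}$, and conversely) is exactly right, and defining $B$ inside $V(A)$ via a forcing formulation is the intended move.

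Two technical points are underspecified and would need to be tightened for a complete proof. First, ``Levy-collapse $|\tc(A)|$ inside $V(A)$'' is not well-posed, since $V(A)$ need not satisfy choice and $\tc(A)$ may have no cardinality there; the correct move is to collapse an ordinal determined by $V$-data (e.g.\ some $\kappa$ exceeding the size, in $V$, of the poset $\mathbb{P}_0$ for which the original $x$ lies in $V[G]$), so that $\mathrm{Col}(\omega,\kappa)\in V\subseteq V(A)$. Second, and more substantively, your absorption claim that a large enough collapse over $V(A)$ produces an extension containing a witness $x'$ with $A_{x'}=A$ is asserted rather than argued: the original $x$ lives in $V[G]$, and while a collapse $H$ generic over $V[G]$ gives $V(A)[H]\subseteq V[G][H]$, it is not automatic that $x$ (or any witness) lands in the smaller model $V(A)[H]$. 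One way through is to show that after the collapse $A$ becomes hereditarily countable in $V(A)[H]$ and then to \emph{construct} a witness there directly (using that the invariant maps in question are surjective onto their ranges of hereditarily countable sets); another is to work with homogeneity of the collapse and argue that $B$, being invariantly determined by $A$, descends to $V(A)$. You have correctly flagged this as the main obstacle, but the sketch stops short of resolving it.
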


Specializing the above theorem to our situation we get the following result.

\begin{cor}
\label{cor : irred to cong}
Suppose $E$ is a Borel equivalence relation on a standard Borel space \( X \), and $x\mapsto A_x$ is an absolute classification of $E$ by hereditarily countable sets.
Let \(x\) be an element of \( X \) in some generic extension of \( V \), and set $A=A_x$.
\begin{enumerate}
    \item If $E\leq_B {=^+_\R}$ then there is a set of reals $B\in V(A)$ such that $B$ is definable from $A$ and parameters in $V$ alone and $V(A)=V(B)$.
    \item If $E\leq_B {\cong^\ast_{3,0}}$ then there is a set of sets of reals $B\in V(A)$ such that $B$ is definable from $A$ and parameters in $V$ alone, $V(A)=V(B)$, and $B$ is countable (in $V(A)$).
\end{enumerate}
\end{cor}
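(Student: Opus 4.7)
The plan is to apply Theorem~\ref{thm : symmetric models irreducibility} to the natural absolute classifications of $=^+_\R$ and $\cong^*_{3,0}$ by hereditarily countable sets. The bulk of the content is Theorem~\ref{thm : symmetric models irreducibility} itself; what remains is to identify the invariants and, in part (2), to repackage them as a set of sets of reals.

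For part (1), the natural classification sends $(y_n)_{n \in \N} \in \R^\N$ to the set of reals $B_y = \set{y_n}{n \in \N}$, and this assignment is manifestly absolute. If $f \colon X \to Y$ Borel-reduces $E$ to $=^+_\R$, then Theorem~\ref{thm : symmetric models irreducibility} applied to $x$ in the generic extension produces $B \coloneqq B_{f(x)}$ satisfying $V(A) = V(B)$ and definable from $A$ and parameters in $V$ alone. Since $B$ is already a set of reals, this is exactly the conclusion of (1).

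For part (2), the natural classification of $\cong^*_{3,0}$ sends a countable structure to the pair $(B_0, R) \in \mathcal{P}^{3,0}_*(\N)$ it codes, which is again absolute. Theorem~\ref{thm : symmetric models irreducibility} yields a pair $(B_0, R) = (B_{f(x)}, R_{f(x)})$ with $V(A) = V((B_0, R))$ and definable from $A$ and parameters in $V$ alone. As pointed out in Remark~\ref{rmk : invs for 3,0}, for each $a \in B_0$ the relation $R$ provides an injection $\phi_a \colon B_0 \to \N$ given by $\phi_a(b) = \min\set{k}{R(a,b,k)}$, so $B_0$ is countable in $V(A)$.

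One cannot simply take $B = B_0$, because $R$ may fail to lie in $V(B_0)$ and thus $V(A) = V(B_0)$ could fail. The fix is to encode the whole pair $(B_0, R)$ as a single set of sets of reals. Using standard pairing and tagging functions, for each $a \in B_0$ one can package the real $a$ together with the set of triples $\set{(b, k)}{R(a, b, k)}$ into a single set of reals $\tilde{b}_a$ from which both pieces of data are recoverable, and set $B \coloneqq \set{\tilde{b}_a}{a \in B_0}$. From $B$ one reads off $B_0$ as the collection of base points and reconstructs $R$ from the tagged triples, so $V(B) = V((B_0, R)) = V(A)$; since the construction of $B$ from $(B_0, R)$ is absolute, $B$ is definable from $A$ and parameters in $V$ alone; and $B$ is countable in $V(A)$ because it is parametrized by the countable set $B_0$. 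The only non-routine point is this coding step, which is forced on us because the $\cong^*_{3,0}$-invariant is really a pair and not a single set.
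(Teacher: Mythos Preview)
Your argument for part (1) is correct and matches the paper's proof.

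For part (2), your overall strategy---apply Theorem~\ref{thm : symmetric models irreducibility} to the natural invariants $(B_0,R)\in\mathcal{P}^{3,0}_\ast(\N)$ and then repackage the pair as a single countable set of sets of reals---is exactly the paper's. However, there is a type error in your coding step that breaks it as written. Since $B_0\in\mathcal{P}^3(\N)$, an element $a\in B_0$ is a \emph{set of reals}, not a real. Your $\tilde{b}_a$ must therefore encode a set of reals $a$ together with the family $\set{(b,k)}{R(a,b,k)}$, where each $b$ is itself a set of reals; collapsing this into a single set of reals by ``standard pairing and tagging'' would amount to a definable injection of a set of sets of reals into a set of reals, which is exactly what one cannot do in $\mathsf{ZF}$. (One could rescue this by explicitly using the enumeration $\phi_a$ to turn the $b$'s into natural-number indices before packing, but you do not do so, and at that point the encoding is no longer routine.)

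The paper's fix avoids grouping by $a$ altogether. Since $B_0$ is countable in $V(A)$ (by your enumeration argument via Remark~\ref{rmk : invs for 3,0}) and $R\subseteq B_0\times B_0\times\N$, the relation $R$ is itself countable in $V(A)$. Fix a definable injection $\mathcal{P}(\R)\times\mathcal{P}(\R)\times\N\hookrightarrow\mathcal{P}(\R)$, apply it to each triple $(a,b,k)\in R$, and let $B$ be the resulting set of sets of reals (together with suitably tagged copies of the members of $B_0$). Then $B$ is a countable set of sets of reals in $V(A)$, it is interdefinable with $(B_0,R)$, and hence $V(B)=V(A)$ with $B$ definable from $A$ and parameters in $V$ alone.
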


\begin{proof}[Sketch of the proof]
Part (1) follows immediately from Theorem~\ref{thm : symmetric models irreducibility}, as $=^+_\R$ admits an absolute classification by hereditarily countable sets  with sets of reals as complete invariants.

Part (2) is \cite[Claim 6.4]{Sha}. The point is that an invariant for $\cong^\ast_{3,0}$ is a pair $(D,R)$ where $D$ is a set of sets of reals, and $R$ codes enumerations of $D$ (see Remark~\ref{rmk : invs for 3,0}).
Since $R$ is contained in $D\times D\times\mathbb{N}$, it is countable as well.
Take a set of sets of reals $B$ coding the pair $(D,R)$ via a definable injective map from $\mathcal{P}(\mathbb{R})\times \mathcal{P}(\mathbb{R})\times \mathcal{P}(\mathbb{R})\times\mathbb{N}$ into $\mathcal{P}(\mathbb{R})$.
Then $B$ is countable, and $B$ and $(D,R)$ are interdefinable in a simple manner.
\end{proof}

\begin{remark}
Readers unfamiliar with the expression ``\( x \) is an element of \( X \) in some generic extension on  \( V \)'' may concentrate on the unique specific situations considered in this paper, namely, when \( X \) is the standard Borel space of countable Archimedean groups, and \( E \) is either order isomorphism or bi-embeddability on \( X \). Then the above condition on \( x \) appearing in both Theorem~\ref{thm : symmetric models irreducibility} and Corollary~\ref{cor : irred to cong} reads as follows: ``Let \( x \) be a countable Archimedean group in some generic extension of \( V \).''
\end{remark}

%%%%%%%%%%%%%%%%%%

\subsection{The basic Cohen model}

In this subsection we present some standard results about the basic Cohen models that we will use later.
This model is one of the first examples presented by Paul Cohen for the independence of the axiom of choice from the axioms of \(\mathsf{ZF}\), that is, a model of \(\mathsf{ZF}\) set theory in which the axiom of choice fails (see \cite{Kan08}).  
For the reader's convenience, we include here self-contained full proofs of such results. Our presentation is slightly different from what can be found in the standard literature \cite{Fel71,Jec73, Jec03} and is more close to the ``definability arguments'' that we will use later. Readers familiar with these concepts may safely skip this section.

%\label{app : B}
%It this appendix we present some useful facts about the basic Cohen model.
%We include a proof of the so-called continuity lemma and derive the
%well-known fact that \(A\) is a Dedekind-finite set in \(V(A)\). 

%The proofs below are carried directly in the model $V(A^{1})$, presented as the set-theoretic definable closure of a generic set of Cohen reals, which is used in Section~\ref{sec : irred to 3,0}. This is slightly different from the presentation which is standard in the literature, as in \cite{Jec73, Jec03, Fel71} (yet yields the same model).

Let $\mathbb{P}$ be the poset of all finite functions $p$ with $\dom p\subset \mathbb{N}$ and $p(i)\subset\mathbb{R}$ is an open interval with rational endpoints.
Say that $p$ \emph{extends} $q$ in $\mathbb{P}$ (in symbols, $p\leq q$) if $\dom p\supset \dom q$ and $p(i)\subset q(i)$ for each $i\in\dom q$.
Suppose $G\subset\mathbb{P}$ is generic over $V$. Define $x(n)\in \mathbb{R}$ as the unique real such that $x(n)\in p(n)$ for all $p\in G$.
Then $x=\seqq{x(n)}{n\in\mathbb{N}}$ is a generic sequence of generic Cohen reals.
Let $A^{1}=\set{x(n)}{n\in\mathbb{N}}$ be the unordered set%
\footnote{The notation \( A^1 \) is consistent with the one of Monro~\cite{Mon73}.}
 enumerated by $x$, that is, the $=^+_\R$-invariant of $x\in\mathbb{R}^{\mathbb{N}}$.
The basic Cohen model can then be presented as $V(A^{1})$.

A standard forcing argument shows:
\begin{fact}\label{claim;Cohen-set-dense}
$A^{1}$ is dense in $\mathbb{R}$.
\end{fact}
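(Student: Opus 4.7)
The plan is to show density of $A^1$ in $\mathbb{R}$ by a standard genericity argument, since every open subset of $\mathbb{R}$ in $V(A^1)$ contains an open interval with rational endpoints (the rationals of $V(A^1)$ agree with $\mathbb{Q}^V$), and it suffices to meet all such intervals.

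First I would fix an arbitrary open interval $I = (a,b)$ with $a,b \in \mathbb{Q}$, and consider the set
\[
D_I = \set{p \in \mathbb{P}}{\exists n \in \dom p\ (p(n) \subseteq I)}.
\]
The key observation is that $D_I$ is dense in $\mathbb{P}$: given any $q \in \mathbb{P}$, pick some $n \in \mathbb{N} \setminus \dom q$ (possible since $\dom q$ is finite) and set $p = q \cup \{(n, I)\}$. Then $p \in \mathbb{P}$, $p \leq q$, and $p \in D_I$.

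Next I would apply genericity: since $D_I \in V$ is dense and $G$ is $V$-generic, there exists $p \in G \cap D_I$. Pick $n \in \dom p$ witnessing $p \in D_I$, so $p(n) \subseteq I$. By the definition of $x(n)$ as the unique real with $x(n) \in q(n)$ for every $q \in G$ with $n \in \dom q$, we get $x(n) \in p(n) \subseteq I$. Hence $A^1 \cap I \neq \emptyset$.

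Since $I$ was an arbitrary rational open interval, and every nonempty open subset $U$ of $\mathbb{R}$ (as computed in $V(A^1)$) contains some such $I$, we conclude $A^1 \cap U \neq \emptyset$ for every nonempty open $U$. No real obstacle arises here; the only subtlety worth being explicit about is that the density statement is made inside $V(A^1)$ and not in $V[G]$, but since the witness $x(n) \in A^1$ we produce is an element of $A^1$ itself and the rational intervals forming a basis are absolute between $V$ and $V(A^1)$, the argument transfers without modification.
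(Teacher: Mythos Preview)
Your argument is correct and is precisely the ``standard forcing argument'' the paper alludes to without spelling out: the paper gives no proof of this fact, merely stating that a standard forcing argument establishes it. Your density set $D_I$ and the genericity application are exactly what is intended.
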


In particular, $A^{1}$ is infinite. Recall the following definition:

\begin{defn}
A set \(X\) is \emph{Dedekind-finite} if it has no countably infinite subset. Equivalently, there are no infinite sequences in \(X\).
\end{defn}

Below we show that $A^{1}$ is Dedekind-finite in $V(A^{1})$.
The key idea is that the reals in $A^{1}$ are ``sufficiently indiscernible'' over the ground model $V$. This is captured by the following.

\begin{lemma}[Continuity Lemma, {\cite[p.133]{Fel71}}]\label{lem;continuity-Cohen-model}
Let $\phi$ be a formula, $\bar{a}=a_0,...,a_{n-1}$ a finite sequence of distinct members of $A^{1}$, and $v\in V$. Suppose $\phi(A^{1},\bar{a},v)$ holds in $V(A)$.
Then there are open intervals with rational endpoints $U_0,...,U_{n-1}$ such that $a_i\in U_i$ and for any $\bar{b} = b_0,...,b_{n-1}$ consisting of distinct elements from $A^{1}$, if $b_i\in U_i$ for all \( i \leq n-1 \), then $\phi(A^{1},\bar{b},v)$ holds in $V(A^{1})$.
\end{lemma}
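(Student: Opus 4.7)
The plan is to argue via the symmetric forcing apparatus underlying $V(A^{1})$. Recall that $V(A^{1})$ arises as the symmetric submodel of $V[G]$ associated with the action of $\mathrm{Sym}(\mathbb{N})$ on $\mathbb{P}$ by index permutation, using the filter of finite-support stabilizers: the canonical name $\dot{A}$ for $A^{1}$ is fixed by the whole symmetric group, while each canonical name $\dot{x}_{n}$ for the $n$-th Cohen real is hereditarily symmetric. Writing $a_{i} = x(n_{i})$ for distinct $n_{i} \in \mathbb{N}$, the hypothesis that $\phi(A^{1}, \bar{a}, v)$ holds in $V(A^{1})$ yields, by the forcing theorem for symmetric submodels, a condition $p \in G$ with
\[
p \Vdash^{\mathrm{HS}} \phi(\dot{A}, \dot{x}_{n_{0}}, \dotsc, \dot{x}_{n_{k-1}}, \check{v}).
\]
Extending $p$ inside $G$ if necessary, I may assume each $n_{i} \in \dom p$, and by shrinking further, that the intervals $U_{i} \coloneqq p(n_{i})$ with $a_{i} \in U_{i}$ are pairwise disjoint. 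These are the candidate intervals for the conclusion.

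To verify they work, I take distinct $b_{0}, \dotsc, b_{k-1} \in A^{1}$ with $b_{i} \in U_{i}$ and write $b_{i} = x(m_{i})$. Disjointness of the $U_{i}$ forces $m_{j} \neq n_{i}$ whenever $j \neq i$, so for each $i$ either $m_{i} = n_{i}$ or $m_{i} \notin \{n_{0}, \dotsc, n_{k-1}\}$; let $I = \{i : m_{i} \neq n_{i}\}$. Assume toward a contradiction that $\phi(A^{1}, \bar{b}, v)$ fails in $V(A^{1})$, so some $q \leq p$ in $G$ forces $\neg \phi(\dot{A}, \dot{x}_{m_{0}}, \dotsc, \check{v})$. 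The strategy is to apply a permutation $\pi$ of $\mathbb{N}$ sending $m_{i}$ to $n_{i}$ for $i \in I$ (and acting trivially elsewhere); since $\pi$ fixes $\dot{A}$ and carries $\dot{x}_{m_{i}}$ to $\dot{x}_{n_{i}}$, the standard automorphism property of the forcing relation gives $\pi q \Vdash^{\mathrm{HS}} \neg\phi(\dot{A}, \dot{x}_{n_{0}}, \dotsc, \check{v})$, so it would suffice to make $\pi q$ compatible with $p$ and take a common refinement, which would force both $\phi$ and its negation and yield the desired contradiction.

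The main obstacle is exactly this compatibility step: the naive choice $\pi = \prod_{i \in I}(m_{i}\ n_{i})$ breaks down as soon as $m_{i} \in \dom p$, because then $(\pi q)(m_{i}) = q(n_{i})$ need not be contained in $p(m_{i})$. My plan to resolve this is to route the permutation through auxiliary fresh coordinates: for each $i \in I$, pick $\ell_{i}$ outside $\dom p \cup \dom q \cup \{n_{j}, m_{j} : j < k\}$, mutually distinct, and then extend $q$ inside $G$ to some $q' \in G$ with $\ell_{i} \in \dom q'$, $q'(m_{i}) \subseteq U_{i}$, and $q'(\ell_{i}) \subseteq U_{i} \cap p(m_{i})$ (the intersection being nonempty because it contains $b_{i}$, with the convention $p(m_{i}) = \mathbb{R}$ if $m_{i} \notin \dom p$). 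Replacing the transpositions by the $3$-cycles $\pi = \prod_{i \in I} (m_{i}\ n_{i}\ \ell_{i})$ leaves the action on the names $\dot{x}_{m_{i}}$ unchanged, while a routine coordinate-by-coordinate check shows that $\pi q'$ and $p$ are now compatible: at $n_{i}$ one has $(\pi q')(n_{i}) = q'(m_{i}) \subseteq U_{i} = p(n_{i})$, at $m_{i}$ one has $(\pi q')(m_{i}) = q'(\ell_{i}) \subseteq p(m_{i})$ by construction, the conflicting value $(\pi q')(\ell_{i}) = q'(n_{i})$ is absorbed at a coordinate outside $\dom p$, and $\pi$ acts trivially elsewhere. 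A common refinement of $p$ and $\pi q'$ then forces both $\phi(\dot{A}, \dot{x}_{n_{0}}, \dotsc, \check{v})$ and its negation, completing the contradiction.
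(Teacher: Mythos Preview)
Your argument is essentially sound, but there is a small genuine gap in the choice of the auxiliary indices $\ell_i$. You pick $\ell_i$ merely ``fresh'' (outside $\dom p \cup \dom q \cup \{n_j,m_j\}$) and then attempt to extend $q$ \emph{inside the generic filter} $G$ to some $q'$ with $q'(\ell_i) \subseteq U_i \cap p(m_i)$. But this is only possible if $x(\ell_i)$ actually lies in $U_i \cap p(m_i)$; observing that the intersection is nonempty (it contains $b_i$) is not enough, since the value $x(\ell_i)$ is already determined by $G$. The fix is immediate: by density of $A^1$ in $\mathbb{R}$ there are infinitely many indices $\ell$ with $x(\ell) \in U_i \cap p(m_i)$, so one can choose the $\ell_i$ to satisfy this additional constraint while still being fresh and mutually distinct. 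With this correction your 3-cycle construction works as stated.

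It is worth contrasting your route with the paper's. The paper avoids the contradiction argument and the auxiliary indices altogether: having fixed $p \in G$ with $U_i = p(l_i)$, it directly builds a finite-support permutation $\pi$ with $\pi(t_i) = l_i$ and, for every $m \in \dom p$, $x(\pi^{-1}(m)) \in p(m)$ (again using density of $A^1$ to handle the coordinates of $\dom p$ outside $\{l_i\}$). Then $G' = \pi[G]$ is a generic filter that still contains $p$, satisfies $\dot{x}[G'](l_i) = b_i$ and $\dot{A}^1[G'] = A^1$, so $\phi(A^1,\bar{b},v)$ follows immediately from $p \Vdash \phi^{V(\dot{A}^1)}(\dot{A}^1,\dot{x}(l_0),\dotsc,\check{v})$. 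This is cleaner: by permuting the entire generic rather than a single condition, one sidesteps the compatibility problem you had to work around with the 3-cycles. Your approach buys nothing extra here, though the technique of rerouting through fresh coordinates to restore compatibility is a useful trick in other symmetric-model arguments.
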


\begin{proof}
Work in the \( \mathbb{P} \)-generic extension $V[G]$.
Fix $l_0,...,l_{n-1}$ such that $a_{i}=x(l_i)$ for all \( i \leq n-1 \).
By assumption $\phi^{V(A^{1})}(A,x(l_0),...,x(l_{n-1}),v)$ holds in $V[G]$, where $\phi^{V(A^{1})}$ is the formula $\phi$ relativized to the model $V(A^{1})$ (see \cite{Jec03} for relativization).

By the forcing theorem, there is a condition $p\in G$ forcing that $\phi^{V(\dot{A^{1}})}(\dot{A},\dot{x}(l_0),...,\dot{x}(l_{n-1}),\check{v})$ holds.
Define $U_i=p(l_i)$.

Let $b_0,...,b_{n-1}$ be distinct reals such that $b_i\in U_i\cap A^{1}$.
Fix $t_0,...,t_{n-1}$ such that $b_i=x(t_i)$.
Let $\pi$ be a finite support permutation of $\mathbb{N}$ such that
\begin{enumerate}
    \item $\pi(t_i)=l_i$ for $i=0,...,n-1$;
    \item for any $m$ in the domain of $p$, $x(\pi^{-1}(m))\in p(m)$.
\end{enumerate}
If $m=l_i$ for some $i$, part (2) is guaranteed by (1) and by the assumption that $x(t_i) = b_i \in U_i$. For other values of $m$, part (2) can be guaranteed by the density of $A^{1}$.

Any such permutation $\pi$ of $\mathbb{N}$ induces an automorphism of the poset $\mathbb{P}$ as follows. For $r\in\mathbb{P}$, $m$ is in the domain of $\pi r$ if and only if $\pi^{-1}(m)$ is in the domain of $r$, in which case $\pi r(m)=r(\pi^{-1}(m))$.
That is, $\pi$ permutes the reals enumerated by the generic $x$.
Define $G'=\set{\pi r}{r\in G}$. Then $G'\subset\mathbb{P}$ is generic over $V$ and such that:
\begin{itemize}
    \item $\dot{x}[G'](l_i)=\dot{x}[G](t_i)=x(t_i)=b_i$ for $i=0,...,n-1$;
    \item $\dot{A^{1}}[G']=\dot{A^{1}}[G]=A^{1}$;
    \item $p\in G'$.
\end{itemize}
Since $p$ forces $\phi^{V(\dot{A^{1}})}(\dot{A^{1}},\dot{x}(l_0),...,\dot{x}(l_{n-1}),\check{v})$, we conclude that $\phi^{V(A^{1})}(A^{1},b_0,...,b_{n-1},v)$ holds in $V[G']$, and therefore $\phi(A^{1},b_0,...,b_{n-1},v)$ holds in $V(A^{1})$, as desired.
\end{proof}

\begin{cor}[See \cite{Jec73}]
In $V(A^{1})$, the set $A^{1}$ is Dedekind-finite.
\end{cor}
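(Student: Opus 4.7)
The plan is to argue by contradiction: assume there is an injective sequence $\langle b_n : n \in \mathbb{N}\rangle$ of elements of $A^{1}$ lying in $V(A^{1})$. By Fact~\ref{fact;definability-V(A)}, such a sequence has a definable support, i.e., there are a formula $\phi$, finitely many distinct $a_0,\ldots,a_{k-1}\in A^{1}$, and a parameter $v\in V$ such that $\langle b_n\rangle$ is the unique object in $V(A^{1})$ satisfying $\phi(\cdot, A^{1}, a_0,\ldots,a_{k-1},v)$. In particular, for each $n$ this produces a formula $\phi_n$ such that $b_n$ is the \emph{unique} real satisfying $\phi_n(z, A^{1}, a_0,\ldots,a_{k-1}, v)$.

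The key step is to choose an index $n$ with $b_n\notin\{a_0,\ldots,a_{k-1}\}$, which is possible because the $b_n$'s are pairwise distinct and the support is finite. Then I would apply the Continuity Lemma (Lemma~\ref{lem;continuity-Cohen-model}) to the true statement $\phi_n(b_n, A^{1}, a_0,\ldots,a_{k-1}, v)$, viewing the full tuple of distinct support parameters from $A^{1}$ as $(a_0,\ldots,a_{k-1}, b_n)$. This yields open rational intervals $U_0\ni a_0,\ldots,U_{k-1}\ni a_{k-1}$, and $U\ni b_n$, such that whenever $a_0',\ldots,a_{k-1}',b'$ are distinct elements of $A^{1}$ with $a_i'\in U_i$ and $b'\in U$, the relativized statement $\phi_n(b', A^{1}, a_0',\ldots,a_{k-1}', v)$ still holds in $V(A^{1})$.

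Specializing to $a_i'=a_i$ for each $i<k$, this means that any $b'\in U\cap A^{1}$ distinct from all the $a_i$'s satisfies $\phi_n(b',A^{1},a_0,\ldots,a_{k-1},v)$, so by the uniqueness property of $b_n$ it must equal $b_n$. But by Fact~\ref{claim;Cohen-set-dense}, $A^{1}$ is dense in $\mathbb{R}$, so $U\cap A^{1}$ is infinite and there are infinitely many candidates for $b'$ outside the finite set $\{a_0,\ldots,a_{k-1}\}$, contradicting the assertion that they all coincide with the single real $b_n$. The only non-routine part is the bookkeeping of incorporating $b_n$ itself into the support tuple before invoking the Continuity Lemma, and this is exactly what the statement of that lemma is designed to accommodate; I do not anticipate any further obstacle.
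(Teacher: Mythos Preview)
Your proposal is correct and follows essentially the same argument as the paper: both pick an element of the putative sequence outside the finite definable support, apply the Continuity Lemma to the statement pinning down that element (treating it as an additional $A^{1}$-parameter), and then use density of $A^{1}$ to find a distinct $b'$ that would have to coincide with it, yielding the contradiction. The only cosmetic difference is that the paper packages the uniqueness into a single formula $\theta$ asserting ``$a$ is the $k$th term of the unique sequence defined by $\psi$'', whereas you isolate a formula $\phi_n$ with a free variable; the logical content is identical.
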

\begin{proof}
Suppose to the contrary that there is an infinite sequence $y=\seqq{y(n)}{n\in\mathbb{N}}$ of distinct members of $A^{1}$.
By Fact~\ref{fact;definability-V(A)} there is some formula $\psi$, a finite sequence $a_0,...,a_{n-1}$ of distinct reals from $A^{1}$, and some $v\in V$ such that $y$ is the unique solution to $\psi(y,A^{1},a_0,...,a_{n-1},v)$ in $V(A^{1})$.
Fix $k\in\mathbb{N}$ such that $y(k)\neq a_i$ for $i=0,...,n-1$. Let $a=y(k)$.

Consider the formula $\theta(A^{1},a_0,...,a_{n-1},a,v)$ saying ``for the unique $y$ such that $\psi(y,A^{1},a_0,...,a_{n-1},v)$ holds, $a=y(k)$''.
By the above lemma, there is an open set $U\subset \mathbb{R}$ such that for any $b\in U\cap A^{1}$, if $b$ is different from $a_0,...,a_{n-1}$ then $\theta(A^{1},a_0,...,a_{n-1},b,v)$ holds.
By Fact~\ref{claim;Cohen-set-dense} there is a $b\in U\cap \mathbb{R}$ such that $b\notin \{a_0,...,a_{n-1},a\}$.
It follows that $b=y(k)=a$, a contradiction.
\end{proof}

%%%%%%%%%%%%%%%%%%%

\subsection{A complicated invariant for \(\cong_\mathsf{ArGp}\)} \label{subsec : complicatedinvariant}

Before proving that ${\cong_{\mathsf{ArGp}}}\not\leq_B {=^+_\R}$ we explain some natural difficulties which will shape the actual proof. 
To show ${\cong_{\mathsf{ArGp}}}\not\leq_B {=^+_\R}$, we want to find an invariant of the form $A \coloneqq A_G$, in some generic extension such that $V(A)$ cannot be presented as $V(B)$, where $B$ is a set of reals (i.e.\ a $=^+_\R$-invariant).

Work over the ``basic Cohen model'' $V(A^1)$.
If we take our group $G$ to be $K = \Q(A^1)$, the field generated by $A^1$, then the invariant of $K$ is the singleton $A_K=\{K\}$, which is simply a set of reals and thus too simple for our purposes.
On the other hand, consider the
\(\Q\)-vector space \(H = \spann_\Q  A^1\cup\{1\}\). Then for distinct $a,b\in H\setminus\{0\}$, either $H/a \cap H/b = \Q$ or $H/a = H/b$. Then, up to this fixed set $\mathbb{Q}$, the invariant $A_H$ consists of a set of pairwise disjoint sets of reals. Since any set of pairwise disjoint sets of reals can be definably coded as a single set of reals (e.g., see \cite[Proposition 5.1]{Sha}), the invariant $A_H$ is again as simple as a set of reals.

Instead, we shall add via forcing a generic group $G$ between $H$ and $K$. In our argument it will be crucial
to add such $G$ by a forcing with enough automorphisms that change $G$ while preserving the invariant $A_G$. It will also be important to have such automorphisms densely, that is, given a condition $p$ compatible with the generic group $G$, we want to have an automorphism, sending $G$ to $G'$, which preserves the invariant $A_{G'}=A_G$ and such that $p$ is compatible with $G'$ as well.

These automorphisms will be those taking a generic group $G$ to $G/r$ for some $r\in G\setminus\{0\}$. 
This looks fine so far: suppose we force with finite approximations. Say $p$ is a condition which determines that $a\in G$ and $b\notin G$ for $a,b\in A^1$. Then we can take some distinct $r\in A^1$, extend $p$ to $q$ which decides that $a\cdot r\in G$ and $b\cdot r\notin G$. Now $q$ forces that after applying the automorphism sending $G$ to \( G/r \), we get that $G'=G/r$ also agrees with $p$ (it decides that $a\in G'$ and $b\notin G'$).

In order for the forcing to be closed under such automorphisms sending $G$ to $G/r$, we must allow the conditions to have in their domain arbitrary negative (and positive) powers of elements from $A^1$. For example, we may want to choose $G$ as generically as possible from the field $K$.

It will also be important to add the group $G$, generically over $V(A^1)$, without adding any real.
This will ensure  that sets of reals will be subsets of the ground model $V(A^1)$, and therefore easier to analyze. 
However, if we add $G$ as a ``generic additive subgroup of $K$'', by finite approximations, then for any $a\in A^1$, the set $\set{n}{a^n\in G}$ would be a new real.
To avoid that, we will add $G$ so that arbitrary powers $a^n$ can occur, but for any fixed $a$, $\set{n\in \mathbb{Z}}{a^n\in G}$ is bounded.

We now provide the actual definition of a forcing \(\mathbb{P}\) satisfying all restrictions described so far, and prove that it has the desired properties.

%Let $V(A^1)$ be the basic Cohen model as in Appendix~\ref{app : B}.
For any distinct $a_1,...,a_k\in A^1$, let $D(a_1,...,a_k)$ be the set of all elements of the form $a_1^{l_1}\cdot...\cdot a_k^{l_k}$ where $l_1,...,l_k$ are integers.
By convention $D(\emptyset)=\{1\}$.
For a natural number $m\geq 1$, let
$D^m(a_1,...,a_k)$ be the subset of $D(a_1,...,a_k)$  consisting of all elements of the form $a_1^{l_1}\cdot...\cdot a_k^{l_k}$ with $-m\leq l_1,...,l_k\leq m$.
Note that $D(a_1,...,a_k)=\bigcup_m D^m(a_1,...,a_k)$.

\begin{defn} \label{def : forcinggroup}
Let $\mathbb{P}$ be the poset of conditions of the form $p=(S_p,G_p)$ where $S_p$ is a finite subset of $A^1$, $G_p$ is a subset of $D^m(S_p)$ for some $m\geq 1$, and $1\in G_p$. For $p,q\in\mathbb{P}$, say that $q$ extends $p$ if
$S_q\supset S_p$ and $G_q\cap D(S_p)= G_p$. 
\end{defn}

Clearly $\mathbb{P}$ is a partial order with maximal element $(\emptyset,\{1\})$. We isolate the following two properties which follow from the mutual genericity of the reals in $A^1$.

\begin{fact}\label{fact : basic properties}
\begin{enumerate}
    \item $D(S_1)\cap D(S_2)=D(S_1\cap S_2)$ for any finite $S_1,S_2\subset A^1$.
    \item Given conditions $p,q$ such that $G_p$ and $G_q$ agree on $S_1\cap S_2$, $p$ and $q$ are compatible.
\end{enumerate}
\end{fact}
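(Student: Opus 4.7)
For part (1), my plan is to exploit the multiplicative (in fact algebraic) independence over $V$ of the Cohen reals comprising $A^1$. The inclusion $D(S_1 \cap S_2) \subseteq D(S_1) \cap D(S_2)$ is immediate. For the reverse, given $x \in D(S_1) \cap D(S_2)$, I would write
\[
x = \prod_{a \in S_1} a^{l_a} = \prod_{b \in S_2} b^{m_b}
\]
and rearrange to obtain
\[
\prod_{a \in S_1 \setminus S_2} a^{l_a} \cdot \prod_{c \in S_1 \cap S_2} c^{l_c - m_c} \cdot \prod_{b \in S_2 \setminus S_1} b^{-m_b} = 1.
\]
The first step is to verify, via a standard genericity argument, that any finite tuple of distinct reals from $A^1$ is algebraically independent over $V$: the zero set of a nontrivial polynomial $P \in V[x_1, \dots, x_n]$ is nowhere dense in $\R^n$, and a mutually Cohen-generic tuple avoids every nowhere dense set coded in $V$. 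Applying this to the elements of $S_1 \cup S_2$ forces all exponents in the displayed relation to vanish, giving $l_a = 0$ for $a \in S_1 \setminus S_2$, $m_b = 0$ for $b \in S_2 \setminus S_1$, and $l_c = m_c$ for $c \in S_1 \cap S_2$. Hence $x = \prod_{c \in S_1 \cap S_2} c^{l_c} \in D(S_1 \cap S_2)$.

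For part (2), the plan is to produce an explicit common extension. Interpreting ``$G_p$ and $G_q$ agree on $S_p \cap S_q$'' as $G_p \cap D(S_p \cap S_q) = G_q \cap D(S_p \cap S_q)$ — the natural reading given part (1) — I would set $r = (S_p \cup S_q, \, G_p \cup G_q)$. Checking that $r \in \mathbb{P}$ is routine: $S_r$ is finite, $1 \in G_p \subseteq G_r$, and if $G_p \subseteq D^m(S_p)$ and $G_q \subseteq D^{m'}(S_q)$ then $G_r \subseteq D^{\max(m,m')}(S_r)$. The substantive verification is that $G_r \cap D(S_p) = G_p$, with $G_r \cap D(S_q) = G_q$ handled symmetrically. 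Expanding and applying part (1),
\[
G_r \cap D(S_p) = G_p \cup \bigl(G_q \cap D(S_p)\bigr) \quad \text{and} \quad G_q \cap D(S_p) \subseteq D(S_q) \cap D(S_p) = D(S_p \cap S_q),
\]
so the agreement hypothesis yields $G_q \cap D(S_p) \subseteq G_p$, whence $G_r \cap D(S_p) = G_p$ as required.

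The main obstacle is part (1), more precisely the algebraic independence of the mutually Cohen-generic reals in $A^1$ over $V$. This is standard but must be invoked with some care: the forcing $\mathbb{P}$ is defined \emph{inside} $V(A^1)$, yet the relevant genericity is that of the tuple enumerating $S_1 \cup S_2$ over the ground model $V$, not over $V(A^1)$. Once this algebraic independence is established, the exponent-comparison argument settles part (1), and part (2) reduces to elementary set-theoretic bookkeeping with the partial order.
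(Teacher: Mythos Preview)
Your proposal is correct and is precisely the argument the paper intends: the Fact is stated without proof, preceded only by the remark that both parts ``follow from the mutual genericity of the reals in $A^1$,'' and your proof unpacks exactly this---algebraic (hence multiplicative) independence of the Cohen reals for part~(1), then a direct construction of the common extension $r=(S_p\cup S_q,\,G_p\cup G_q)$ for part~(2). Your reading of the agreement hypothesis as $G_p\cap D(S_p\cap S_q)=G_q\cap D(S_p\cap S_q)$ is the intended one (the $S_1,S_2$ in the statement are evidently $S_p,S_q$), and the verification that $G_r\cap D(S_p)=G_p$ via part~(1) is exactly right.
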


As expected, the conditions $p=(S_p,G_p)$ are an approximation of a generic group $\dot{G}$ we wish to add. Moreover, each \( p \in \mathbb{P} \) completely decides $\dot{G}\cap D(S_p)$, and it decides it to be a subset of $D^m(S_p)$ for some $m$.

The generic group \( G \) is obtained from any \( \mathbb{P} \)-generic (over \( V(A^1) \)) \( F \) as follows.
Let $F'$ be the union of all $G_p$ where $p \in F$.
We let $G = \langle F'\rangle$ be the additive subgroup of $\mathbb{R}$ generated by $F'$, and let $A \coloneqq A_G=\set{G/r}{r\in G\setminus\{0\}}$ be its invariant.
Note that $V(A)=V(A^1)[F]$.

%%%%

For $a\in A^1$ define $\mathbb{P}_a=\set{p\in\mathbb{P}}{a\in G_p}$ and $\mathbb{P}_{a^{-1}}=\set{p\in\mathbb{P}}{a^{-1}\in G_p}$. Note that by mutual genericity, $a\in S_p$ for every $p \in \mathbb{P}_a, \mathbb{P}_{a^{-1}}$.
Define $\delta_a\colon\mathbb{P}_a\to\mathbb{P}_{a^{-1}}$ by $\delta_a(p)=(S_p,G_p/a)$. Define $\pi_a\colon\mathbb{P}_{a^{-1}}\to\mathbb{P}_{a}$ by $\pi_a(p)=(S_p,G_p\cdot a)$.
Note that for any $p\in \mathbb{P}_a$, the forcings \( \mathbb{P} \) and \( \mathbb{P}_a \) coincide on conditions extending \( p \),
%we have $\mathbb{P}\restriction p=\mathbb{P}_a\restriction p$. 
and similarly for $\mathbb{P}_{a^{-1}}$.

\begin{fact}
The maps $\delta_a$ and $\pi_a$ are isomorphisms between $\mathbb{P}_a$ and $\mathbb{P}_{a^{-1}}$.
\end{fact}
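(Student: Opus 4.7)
The proof is a direct verification of three items: that $\delta_a$ and $\pi_a$ map into the stated subposets, that they are mutual inverses, and that they preserve the extension order. There is no genuine obstacle; the one point that requires a small observation is that any $p \in \mathbb{P}_a$ automatically satisfies $a \in S_p$, which I would extract from the algebraic independence of the Cohen reals recorded in Fact~\ref{fact : basic properties}(1).

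First I would verify well-definedness. Given $p \in \mathbb{P}_a$, one has $a \in G_p \subseteq D^m(S_p)$; applying Fact~\ref{fact : basic properties}(1) with $S_1 = S_p$ and $S_2 = \{a\}$ gives $a \in D(S_p) \cap D(\{a\}) = D(S_p \cap \{a\})$, and since $a \neq 1$ this forces $a \in S_p$. Consequently $G_p/a \subseteq D^{m+1}(S_p)$, it contains $1 = a/a$ (since $a \in G_p$), and it contains $a^{-1} = 1/a$ (since $1 \in G_p$), so $\delta_a(p) = (S_p, G_p/a) \in \mathbb{P}_{a^{-1}}$. The symmetric argument, using $a^{-1} \in S_p$ for $p \in \mathbb{P}_{a^{-1}}$, shows that $\pi_a$ lands in $\mathbb{P}_a$. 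Mutual inverseness is then immediate, since $(G_p/a) \cdot a = G_p$ and $(G_p \cdot a)/a = G_p$.

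Finally, for order preservation, suppose $q$ extends $p$ in $\mathbb{P}_a$, so $S_q \supseteq S_p$ and $G_q \cap D(S_p) = G_p$. Then certainly $S_{\delta_a(q)} = S_q \supseteq S_p = S_{\delta_a(p)}$; and since $a \in S_p$, the map $x \mapsto x/a$ is a bijection of $D(S_p)$ onto itself, so
\[
(G_q/a) \cap D(S_p) = (G_q \cap D(S_p))/a = G_p/a,
\]
which exhibits $\delta_a(q)$ as an extension of $\delta_a(p)$ in $\mathbb{P}_{a^{-1}}$. The same computation with $a$ replacing $a^{-1}$ handles $\pi_a$, completing the proof that $\delta_a$ and $\pi_a$ are mutually inverse order isomorphisms between $\mathbb{P}_a$ and $\mathbb{P}_{a^{-1}}$.
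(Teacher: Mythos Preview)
Your proof is correct and is exactly the routine verification the paper omits; the paper states this as a Fact without proof, having already noted (just before defining $\delta_a$ and $\pi_a$) that ``by mutual genericity, $a\in S_p$ for every $p \in \mathbb{P}_a, \mathbb{P}_{a^{-1}}$,'' which is the one nontrivial point you correctly extract from Fact~\ref{fact : basic properties}(1).
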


We will use $\delta_a$ to change the generic group $G$ without changing its invariant \( A_G \).
For a condition $p\in\mathbb{P}$ and $S\subset S_p$, define $p\restriction S=(S,G_p\cap D(S))$.

\begin{lemma}\label{lem;no-new-reals}
Forcing with $\mathbb{P}$ over $V(A^1)$ does not add any subsets of $V$.
\end{lemma}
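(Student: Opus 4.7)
The plan is an automorphism argument: I will show that any $p\in\mathbb{P}$ forcing $\dot{x}\subseteq\check{V}$ must already decide, for every $v\in V$, whether $\check{v}\in\dot{x}$. Once this is established, $\dot{x}^F=\set{v\in V}{p\force \check{v}\in\dot{x}}$ is definable in $V(A^1)$, so $\dot{x}^F\in V(A^1)$. By Fact~\ref{fact;definability-V(A)}, the name $\dot x \in V(A^1)$ has a definable support consisting of a finite tuple $\bar a$ from $A^1$ together with $V$-parameters; by extending $p$ we may assume $\bar a \subseteq S_p$.

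The heart of the argument is a contradiction. Suppose $q_1, q_2 \le p$ with $q_1 \force \check v \in \dot x$ and $q_2 \force \check v \notin \dot x$, and set $T_i = S_{q_i} \setminus S_p$ for $i=1,2$. Using density of $A^1$ in $\R$ (Fact~\ref{claim;Cohen-set-dense}), I pick a finite set $T_3 \subseteq A^1 \setminus (S_{q_1} \cup S_{q_2})$ of size $|T_1|$ together with a bijection $T_1 \to T_3$, and extend it to a permutation $\sigma$ of $A^1$ which is the identity outside $T_1 \cup T_3$. Algebraic independence of $A^1$ over $\mathbb{Q}$, which follows from mutual genericity of the Cohen reals, lets $\sigma$ extend multiplicatively to a bijection of $D(A^1)$ preserving each filtration $D^m(\cdot)$, so $\sigma$ induces an automorphism of $\mathbb{P}$ lying in $V(A^1)$.

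Since $\sigma$ pointwise fixes $\bar a$ and $V$, it fixes $\dot x$ (in the sense that the induced automorphism on $\mathbb{P}$-names sends $\dot x$ to itself), and clearly $\sigma(\check v)=\check v$. Hence $\sigma(q_1) \force \check v \in \dot x$. On the other hand, by construction $S_{\sigma(q_1)} \cap S_{q_2} = (S_p \cup T_3) \cap (S_p \cup T_2) = S_p$, and both $G_{\sigma(q_1)} = \sigma(G_{q_1})$ and $G_{q_2}$ restrict to $G_p$ on $D(S_p)$ (using $\sigma \restriction D(S_p) = \mathrm{id}$ and the extension condition $G_{q_i}\cap D(S_p)=G_p$). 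Fact~\ref{fact : basic properties}(2) then makes $\sigma(q_1)$ and $q_2$ compatible, and any common extension would force both $\check v \in \dot x$ and $\check v \notin \dot x$, the desired contradiction.

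The principal obstacle is working without choice inside $V(A^1)$: one must verify that the required permutation $\sigma$ really lives there. This is not a serious issue because all the data involved is finite, and density of $A^1$ gives enough room to find $T_3$. The other delicate point is the assertion that $\sigma$ fixes the name $\dot x$; this is the standard ``support'' argument for symmetric models, based on the fact that $\dot x$ is definable in $V(A^1)$ from $\bar a$ and $V$-parameters and that $\sigma$'s induced action on $\mathbb{P}$-names is trivial on those defining parameters.
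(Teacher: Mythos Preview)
Your approach and the paper's are the same idea in slightly different packaging. The paper phrases the key invariance step through the Continuity Lemma: the statement ``$p\force \check w\in\tau$'' is first-order in $V(A^1)$ with parameters $A^1,\bar a_0,\bar a,v,w$ (the forcing relation and $\mathbb{P}$ itself being definable from $A^1$), so by Lemma~\ref{lem;continuity-Cohen-model} one may replace the extra tuple $\bar a=S_p\setminus\bar a_0$ by any nearby $\bar a'\subset A^1$, obtaining $p[\bar a']\force\check w\in\tau$ with $p[\bar a']$ compatible with a given $q\ge p\restriction\bar a_0$. You instead permute $A^1$ directly and move $q_1$ off $q_2$. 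Both yield that any condition whose domain contains the support decides membership.

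One point in your write-up deserves more care. You assert that the automorphism $\sigma$ of $\mathbb{P}$ fixes the name $\dot x$ because $\dot x$ is definable from $\bar a$, $A^1$, and $V$-parameters, which $\sigma$ fixes. But $\sigma$, as you present it, is merely an automorphism of $\mathbb{P}$ \emph{inside} $V(A^1)$; its induced recursive action $\sigma^\ast$ on names is a class function there, not an elementary map, so definability of $\dot x$ from fixed parameters does not by itself give $\sigma^\ast(\dot x)=\dot x$. What makes the argument go through is that your finite-support permutation of $A^1$ lifts, via a permutation of the Cohen indices in $V[G]$, to an automorphism $\Sigma$ of the model $V(A^1)$ itself; then $\Sigma$ genuinely preserves first-order definability, so $\Sigma(\dot x)=\dot x$, $\Sigma(\mathbb{P})=\mathbb{P}$, and hence $\Sigma$ transports ``$q_1\force\check v\in\dot x$'' to ``$\sigma(q_1)\force\check v\in\dot x$''. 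This lifting step is exactly what the Continuity Lemma encapsulates, which is why the paper routes the argument through it. Once you make this explicit (or simply invoke Lemma~\ref{lem;continuity-Cohen-model} on the forcing statement), your proof is complete.
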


\begin{proof}
Let $\tau\in V(A^1)$ be a $\mathbb{P}$-name for a subset of $V$. Then
$\tau$ is definable in $V(A^1)$ using $A^1$, a finite tuple $\bar{a}_0\subset A^1$ and a parameter $v\in V$.
We will show that for any $w\in V$, if $p\force \check{w}\in\tau$, then $p\restriction\bar{a}_0\force \check{w}\in\tau$. It follows that any condition with domain $\bar{a}_0$ already decides the statement ``$\check{w}\in \tau$'' for all $w\in V$. Thus the set added by $\tau$ is forced to be in the ground model $V(A^1)$.

Assume that $p\in\mathbb{P}$ forces that $\check{w}\in\tau$. Let $\bar{a}_1$ enumerate $S_p$. We may assume that $\bar{a}_0$ is contained in $\bar{a}_1$ and write $\bar{a}_1=\bar{a}_0^\frown \bar{a}$ where $\bar{a}$ is a finite subset of $A^1$ disjoint from $\bar{a}_0$. Let $\bar{a}=a_1,...,a_l$.

There are basic open sets $U_1,...,U_l$ such that $a_i\in U_i$ for \( 1 \leq i \leq l \), and for any $\bar{a}'\subset A^1$, if $a'_i\in U_i$ for all $1 \leq i \leq l$ then $p[\bar{a}']\force \check{w}\in\tau$, where $p[\bar{a}']$ is defined from $p$ by replacing $a_i$ with $a'_i$.
This follows from the Continuity Lemma~\ref{lem;continuity-Cohen-model} applied to the statement ``$\check{w}\in\tau$''. 
Note that $\mathbb{P}$ is defined from $A^1$ alone, therefore so is the associated forcing relation $\force$.
The name $\tau$ is definable from $A^1$, $\bar{a}_0$ and $v$, and $p$ involves only the parameters $\bar{a}_1$. Thus the statement ``$\check{w}\in\tau$'' can be written as $\phi(A^1,\bar{a}_0,\bar{a},v,w)$ for some formula $\phi$.

Suppose now $q$ is some condition extending $p\restriction\bar{a}_0$.
Let $\bar{a}'$ be disjoint from $S_q$ and such that $a'_i\in U_i$.
Then $q$ and $p[\bar{a}']$ are compatible.
We established that any extension of $p\restriction\bar{a}_0$ is compatible with a condition forcing $\check{w}\in\tau$. It follows that $p\restriction\bar{a}_0\force \check{w}\in\tau$, as required.
\end{proof}

It follows from Lemma~\ref{lem;no-new-reals} that $\mathbb{P}$ adds no new members to any Polish space, as such members can be coded as subsets of V. For example, no new members of $\mathbb{R}$ or $\mathbb{R}^\mathbb{N}$ are added.

\begin{remark}
In particular $\mathbb{P}$ adds no sets of ordinals over $V(A^1)$. This phenomenon can only happen when forcing over models in which the axiom of choice fails. 
An extension of the basic Cohen model which adds no sets of ordinals was first constructed in \cite{Jec71}. Higher rank analogues were established in \cite{Mon73}, \cite{Kar18}, \cite{Kar19} and \cite{Sha}.
Extensions adding no sets of ordinals to $\mathrm{L}(\mathbb{R})$ were studied in \cite{HMW} and \cite{DH21}.
\end{remark}

\begin{lemma}\label{lem;genericity}
Given any $p=(S_p,G_p)\in\mathbb{P}$ and a finite set $S\subset A^1$, the collection of all $q\in\mathbb{P}$ such that there is some $d\in A^1\setminus S$ for which $d\in G_q$ and $G_q\cap (d\cdot D(S_p))=d\cdot G_p$ is dense.
Note that in this case $\delta_d(q)$ extends $p$.
\end{lemma}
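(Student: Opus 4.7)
The plan is to establish density by fixing an arbitrary $p'\in\mathbb{P}$ and constructing an extension $q\leq p'$ lying in the collection. The idea is to pick a fresh atom $d\in A^1$ and build $q$ by grafting a shifted copy $d\cdot G_p$ of $G_p$ onto $G_{p'}$.

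Concretely, I would first choose $d\in A^1\setminus (S\cup S_p\cup S_{p'})$, which is possible since $A^1$ is infinite (in fact dense in $\R$ by Fact~\ref{claim;Cohen-set-dense}). Then I would set
\[
S_q=S_{p'}\cup S_p\cup\{d\}\qquad\text{and}\qquad G_q=G_{p'}\cup (d\cdot G_p).
\]
Taking $m=\max(m_1,m_2,1)$, where $G_{p'}\subseteq D^{m_1}(S_{p'})$ and $G_p\subseteq D^{m_2}(S_p)$, one checks immediately that $G_q\subseteq D^m(S_q)$ and $1\in G_q$, so $q=(S_q,G_q)\in\mathbb{P}$. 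Moreover $d\in A^1\setminus S$ by choice, and $d=d\cdot 1\in d\cdot G_p\subseteq G_q$.

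It then remains to verify the two equalities $G_q\cap D(S_{p'})=G_{p'}$ (which gives $q\leq p'$) and $G_q\cap (d\cdot D(S_p))=d\cdot G_p$ (which places $q$ in the collection). Both reduce to the single disjointness $d\cdot D(S_p)\cap D(S_{p'})=\emptyset$: the first because $d\cdot G_p\subseteq d\cdot D(S_p)$, and the second because $G_{p'}\subseteq D(S_{p'})$, the inclusion $d\cdot G_p\subseteq G_q\cap(d\cdot D(S_p))$ being obvious.

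The main obstacle is thus establishing this disjointness. If $d\cdot g=g'$ with $g\in D(S_p)$ and $g'\in D(S_{p'})$, then $d=g'/g\in D(S_p\cup S_{p'})$, i.e.\ $d$ is an integer-exponent monomial in elements of $S_p\cup S_{p'}$. This contradicts the algebraic (hence multiplicative) independence of distinct members of $A^1$ over $\Q$: since the elements of $A^1$ are mutually generic Cohen reals, any polynomial relation over $\Q$ among finitely many of them would be avoided by the generic. As a bonus, the inclusion $S_q\supseteq S_p$ combined with $(G_q/d)\cap D(S_p)=G_p$ then immediately yields $\delta_d(q)\leq p$, as promised in the final sentence of the lemma.
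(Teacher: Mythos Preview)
Your proof is correct and follows essentially the same approach as the paper's: pick $d$ fresh outside $S\cup S_p\cup S_{p'}$, set $S_q=S_{p'}\cup S_p\cup\{d\}$, and graft $d\cdot G_p$ onto $G_{p'}$. The paper is slightly terser, citing Fact~\ref{fact : basic properties} for the disjointness $d\cdot D(S_p)\cap D(S_{p'})=\emptyset$ rather than spelling out the algebraic-independence argument as you do, but the underlying content is the same.
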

\begin{proof}
Let $r=(S_r,G_r)\in\mathbb{P}$ be a condition. Take $d\in A^1\setminus (S_r\cup S_p\cup S)$.
Then $d\cdot D(S_p)\cap D(S_r)=\emptyset$ by Fact~\ref{fact : basic properties}.
Define $q=(S_q,G_q)$ so that $S_q=S_r\cup S_p\cup\{d\}$, $G_q\cap (d\cdot D(S_p))=d\cdot G_p$, and $G_q\cap D(S_r)= G_r$. Then $q$ extends $r$ and is as required.
\end{proof}
\begin{cor}\label{cor : genericity inf often}
If $F$ is a $\mathbb{P}$-generic filter over $V(A^1)$ and $p\in F$, then there are infinitely many values $d\in A^1$ for which there is $q\in F$ such that $d\in G_q$ and $\delta_d(q)$ extends $p$.
Note that then $\delta_d(F)$ is a generic filter containing $p$.
\end{cor}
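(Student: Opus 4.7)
My plan is to derive the corollary directly from Lemma~\ref{lem;genericity} by a standard genericity argument, taking modest care because the ground model $V(A^1)$ does not satisfy the axiom of choice.

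First I will introduce, inside $V(A^1)[F]$, the set
\[
Y = \set{d \in A^1}{\exists q \in F \ \bigl(d \in G_q \text{ and } \delta_d(q) \leq p\bigr)},
\]
and argue by contradiction that $Y$ is infinite. Suppose it were finite; then by the very definition of finiteness in $\mathsf{ZF}$, we can enumerate $Y = \{d_1, \dots, d_k\}$ with each $d_i \in A^1 \subseteq V(A^1)$, so the finite set $S := Y$ itself belongs to $V(A^1)$. Applying Lemma~\ref{lem;genericity} to the given condition $p$ and to this $S$, the set
\[
E = \set{q \in \mathbb{P}}{\exists d \in A^1 \setminus S \ \bigl(d \in G_q \text{ and } G_q \cap (d \cdot D(S_p)) = d \cdot G_p\bigr)}
\]
is a dense subset of $\mathbb{P}$ living in $V(A^1)$, and for any such $q$ the witnessing $d$ automatically satisfies $\delta_d(q) \leq p$ by the parenthetical remark in the lemma. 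Genericity of $F$ then produces $q \in F \cap E$ with a witness $d \in A^1 \setminus S$; by construction such a $d$ lies in $Y = S$, a contradiction.

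The concluding assertion will follow from the already-noted fact that $\delta_d$ is an order-isomorphism between $\mathbb{P}_d$ and $\mathbb{P}_{d^{-1}}$. Since $F$ meets $\mathbb{P}_d$ at any witnessing $q$, the filter on $\mathbb{P}$ generated by $\set{\delta_d(r)}{r \in F \cap \mathbb{P}_d}$ is $\mathbb{P}$-generic over $V(A^1)$, and by construction it contains $\delta_d(q) \leq p$, hence also $p$ itself. I do not see a substantial obstacle here: the only point deserving a brief check is that writing a finite $Y \subseteq A^1$ as a ground-model enumeration $\{d_1, \dots, d_k\}$ requires no appeal to choice, which is automatic from the definition of finiteness in $\mathsf{ZF}$.
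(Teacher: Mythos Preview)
Your proof is correct and follows essentially the same approach as the paper's: the paper simply observes that for any finitely many $d_1,\dots,d_k$, Lemma~\ref{lem;genericity} (applied with $S=\{d_1,\dots,d_k\}$) yields some $q\in F$ and a fresh $d\in A^1\setminus\{d_1,\dots,d_k\}$ as required, which is exactly your contradiction argument with $S=Y$. Your additional remarks---that a finite $Y\subseteq A^1$ lies in $V(A^1)$ without any appeal to choice, and the verification that $\delta_d(F)$ is generic and contains $p$---are correct elaborations that the paper leaves implicit.
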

\begin{proof}
Fix $p\in F$. For any finitely many $d_1,...,d_k$, it follows from the previous lemma that there is a $q\in F$ and $d\in A^1\setminus\{d_1,...,d_k\}$ as desired.
\end{proof}

\begin{lemma}\label{claim : genericity}
Given distinct $a,b\in A^1$, $G/a$ and $G/b$ are distinct
\end{lemma}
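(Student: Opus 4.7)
The plan is to establish $G/a \neq G/b$ by a density argument. A convenient starting observation is that $G/a = G \cdot a^{-1}$ and $G/b = G \cdot b^{-1}$, so $G/a = G/b$ is equivalent to $G = G \cdot (a/b)$. In particular, to show $G/a \neq G/b$ it suffices to exhibit an element $g \in G$ for which $g \cdot (b/a) \notin G$.

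Fix an arbitrary condition $p \in \mathbb{P}$. After a harmless preliminary extension we may assume $a, b \in S_p$. Since $A^{1}$ is infinite (cf. Fact~\ref{claim;Cohen-set-dense}), pick some $d \in A^{1} \smallsetminus S_p$ and set
\[
S_q \coloneqq S_p \cup \{d\}, \qquad G_q \coloneqq G_p \cup \{b \cdot d\}.
\]
Because $b \cdot d \in D^1(S_q) \smallsetminus D(S_p)$ by Fact~\ref{fact : basic properties}(1) (since $d \notin S_p$), the pair $q \coloneqq (S_q, G_q)$ is a legitimate condition extending $p$. Taking $g \coloneqq b \cdot d$, I will argue that $q$ forces both $b \cdot d \in \dot G$ and $b^2 d / a \notin \dot G$, which by the previous paragraph suffices.

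The first is immediate, since $b \cdot d \in G_q \subseteq F'$. For the second, the key input is that $A^{1}$ consists of algebraically independent reals, as the members of $A^1$ are mutually generic Cohen reals. Hence distinct Laurent monomials in finite subsets of $A^{1}$ are $\mathbb{Q}$-linearly independent in $\mathbb{R}$. Since $G = \langle F' \rangle$ is generated additively by Laurent monomials in $F'$, this linear independence implies that for any Laurent monomial $m$, one has $m \in G$ if and only if $m \in F'$. It therefore suffices to show that no $r \in F$ contains $b^2 d / a$ in $G_r$. If some $r \in F$ did, then $b^2 d / a \in G_r \subseteq D(S_r)$ would force $\{a, b, d\} \subseteq S_r$ by Fact~\ref{fact : basic properties}(1). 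Letting $s \in F$ be any common extension of $q$ and $r$ (which exists since $F$ is a filter), the clause $G_s \cap D(S_q) = G_q$ together with $b^2 d / a \in D(S_q) \smallsetminus G_q$ would yield $b^2 d / a \notin G_s$, while $G_s \cap D(S_r) = G_r$ and $b^2 d / a \in G_r$ would yield $b^2 d / a \in G_s$, a contradiction.

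The main technical point is the reduction from \emph{``$b^2 d / a \in G$''} to \emph{``$b^2 d / a \in F'$''}, which genuinely depends on the algebraic independence of the Cohen reals in $A^{1}$. The rest is routine density and filter-compatibility manipulation within the combinatorics of $\mathbb{P}$, and the density of the set of such $q$ over arbitrary $p \in \mathbb{P}$ completes the argument.
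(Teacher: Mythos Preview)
Your proof is correct and follows essentially the same approach as the paper's. The paper applies Lemma~\ref{lem;genericity} to the condition $p=(\{a,b\},\{1,a\})$ to obtain a fresh $d$ with $da\in F'$ and $db\notin F'$, so that $d\in (G/a)\setminus(G/b)$; you instead carry out the density argument by hand, producing the witness $bd/a\in (G/a)\setminus(G/b)$. One virtue of your write-up is that you make explicit the step ``a Laurent monomial lies in $G$ iff it lies in $F'$,'' which relies on the $\mathbb{Q}$-linear independence of distinct monomials in the mutually generic Cohen reals --- the paper uses this implicitly when passing from $db\notin F'$ to $db\notin G$.
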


\begin{proof}
Define $p=(S_p,G_p)$ by $S_p=\{a,b\}$ and $G_p=\{a\}$.
By Lemma~\ref{lem;genericity} there is some $q$ in the generic $F$ for which there is some $d\in A^1$ such that $G_q\cap(d\cdot D(\{a,b\}))=\{d a\}$.
It follows that $da\in G$ and $db\notin G$, thus $d\in (G/a)\setminus (G/b)$.
\end{proof}

%%%%%%%%%%%%%%%%

\subsection{Proof of Theorem~\ref{thm : main}}\label{sec : proof of main thm}
As a warm up, we first prove that $\cong_{\mathsf{ArGp}}$ is not Borel reducible to $=^+_\R$.
Recall that \( A \coloneqq A_G \) is the invariant of the generic group \( G = \langle F' \rangle \) added by forcing with \( \mathbb{P} \) over \( V(A^1) \).

\begin{lemma}\label{lem : not generated by set of reals}
If $B\in V(A)$ is a set of reals which is definable from $A$ and parameters in $V$ alone, then $B\in V(A^1)$. Thus, \(V(A) \neq V(B)\).
\end{lemma}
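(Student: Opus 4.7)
The plan is to show that $B \in V(A^1)$, which immediately yields $V(A) \neq V(B)$: since $V$ and $B$ both lie in $V(A^1)$, by minimality $V(B) \subseteq V(A^1) \subsetneq V(A^1)[F] = V(A)$, the strict containment holding because $F$ is a nontrivial generic filter over $V(A^1)$. By hypothesis $B$ is definable in $V(A) = V(A^1)[F]$ from $A$ and some $v \in V$ via a formula $\varphi$, and by Lemma~\ref{lem;no-new-reals} we have $B \subseteq \R^{V(A^1)}$. Writing $\dot A \in V(A^1)$ for the canonical $\mathbb{P}$-name for the invariant of the generic group, consider the $\mathbb{P}$-name
\[
\tau = \set{(\check r, p)}{r \in \R^{V(A^1)} \text{ and } p \force \varphi(\check r, \dot A, \check v)} \in V(A^1).
\]
Since $\tau^F = B$, it suffices to show that the trivial condition $(\emptyset, \{1\})$ decides $\check r \in \tau$ for every $r \in \R^{V(A^1)}$: for then $B = \set{r \in \R^{V(A^1)}}{(\emptyset, \{1\}) \force \check r \in \tau}$ is definable in $V(A^1)$.

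Suppose towards a contradiction that for some $r$ the statement $\check r \in \tau$ is undecided by the trivial condition, and fix $p_0, p_1 \in \mathbb{P}$ with $p_0 \force \check r \in \tau$ and $p_1 \force \check r \notin \tau$. Applying Lemma~\ref{lem;genericity} with $p = p_0$ and $S = S_{p_1}$, extending the starting condition $p_1$, produces $q \leq p_1$ and $d \in A^1 \setminus (S_{p_0} \cup S_{p_1})$ such that $d \in G_q$ and $\delta_d(q)$ extends $p_0$. Consequently $q \force \check r \notin \tau$ while $\delta_d(q) \force \check r \in \tau$.

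The crux---and the main technical obstacle---is to verify that the isomorphism $\delta_d$ fixes the name $\tau$. This reduces to showing that $\dot A$ is $\delta_d$-invariant: since $\delta_d$ sends $\dot G$ to $\dot G/d$, the substitution $s \mapsto s \cdot d$ induces a bijection between $\dot G/d \setminus \{0\}$ and $\dot G \setminus \{0\}$ identifying
\[
\set{(\dot G/d)/s}{s \in \dot G/d \setminus \{0\}} \text{ with } \set{\dot G/r}{r \in \dot G \setminus \{0\}},
\]
so $\delta_d(\dot A) = \dot A$ is forced. Since $\tau$ is built uniformly from $\dot A$, the ground-model parameter $\check v$, and $\delta_d$-invariant check-names, we get that $\delta_d(\tau) = \tau$ is also forced. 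Applying $\delta_d$ to the forcing statement $q \force \check r \notin \tau$ then yields $\delta_d(q) \force \check r \notin \tau$, contradicting $\delta_d(q) \force \check r \in \tau$.
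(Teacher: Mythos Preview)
Your proof is correct and mirrors the paper's argument: both reduce to showing that no two conditions force conflicting values of the defining formula, then invoke Lemma~\ref{lem;genericity} together with the invariance of $A_G$ under $G\mapsto G/d$ to derive a contradiction. The only wrinkle is that $\delta_d$ is an isomorphism $\mathbb{P}_d\to\mathbb{P}_{d^{-1}}$ rather than an automorphism of $\mathbb{P}$, so your statement ``$\delta_d(\tau)=\tau$'' should be read as $\tau[F]=\tau[\delta_d(F)]$ for any generic $F\ni q$---which is exactly how the paper makes the step precise, working with generic filters $F_1$ and $F_2=\delta_d(F_1)$ and verifying $\dot A[F_1]=\dot A[F_2]$ directly.
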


\begin{proof}
Let $\phi$ be a formula  and \( v \in V \) be such that, in $V(A)$, $B$ is defined as 
\[
B=\set{x\in\mathbb{R}}{\phi(x,A,v)}.
\]
Since $\mathbb{P}$ does not add reals, $B\subset V(A^1)$. We claim that $B$ can be defined in $V(A^1)$ as the set of all $x\in\mathbb{R}$ such that $\mathbb{P}\force\phi^{V(\dot{A})}(\check{x},\dot{A},v)$.
To this aim, it suffices to show that two conditions in $\mathbb{P}$ cannot force conflicting values of $\phi^{V(\dot{A})}(\check{x},\dot{A},v)$.

Indeed, suppose $p$ and $q$ force $\phi^{V(\dot{A})}(\check{x},\dot{A},v)$ and $\neg\phi^{V(\dot{A})}(\check{x},\dot{A},v)$, respectively.
Let $F_1$ be a \( \mathbb{P} \)-generic filter extending $p$. By Lemma~\ref{lem;genericity} there is some $d\in A^1$ such that $F_1\cap d\cdot D(S_q)=d\cdot G_q$. Let $F_2=\delta_d(F_1)$. Then $F_2$ is a $\mathbb{P}$-generic filter extending $q$.
Finally, $\dot{G}[F_2]=\dot{G}[F_1]/d$: any element in $\dot{G}[F_1]$ is some linear combination $u=\sum_{i<k}g_i$ where $g_i\in F_1$. So $\frac{u}{d}=\sum_{i<k}\frac{g_i}{d}\in \dot{G}[F_2]$, and vice versa.
It follows that $\dot{A}[F_1]=\dot{A}[F_2]=A$.
This is a contradiction, since working in $V(A^1)[F_1]$ we conclude that $\phi^{V(A)}(x,A,v)$ holds, yet working in $V(A^1)[F_2]$ we conclude that $\phi^{V(A)}(x,A,v)$ fails.
\end{proof}

From Lemma~\ref{lem : not generated by set of reals} and the contrapositive of Corollary~\ref{cor : irred to cong}(1) we get:

\begin{cor}
\label{cor : not reducible =^+_R}
\( {\cong_\mathsf{ArGp}} \not\leq_B {=_\R^{+}} \).
\end{cor}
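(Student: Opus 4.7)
The plan is to deduce the corollary directly from Lemma~\ref{lem : not generated by set of reals} via the contrapositive of Corollary~\ref{cor : irred to cong}(1). The first thing I would check is that the map \(G \mapsto A_G\) constructed in Section~\ref{subsec : explicitinvariants} (that is, the composite \(G \mapsto \vec{x}_G \mapsto A_{\vec{x}_G}\) from~\eqref{eq : AxG}) is an absolute classification of \(\cong_\mathsf{ArGp}\) by hereditarily countable sets in the sense of the introductory discussion of Section~\ref{sec : irred to 3,0}. Absoluteness is clear from the fact that the definitions of \(\vec{x}_G\) and \(A_{\vec{x}_G}\) are given by explicit first-order formulas in the language of set theory; correctness of the classification is Proposition~\ref{prop : invariants}. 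Consequently Corollary~\ref{cor : irred to cong}(1) applies to \(\cong_\mathsf{ArGp}\) with this assignment of invariants.

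Assuming for contradiction that \(\cong_\mathsf{ArGp} \leq_B {=^+_\R}\), I would produce a specific offending Archimedean group by means of the two-step extension \(V \subseteq V(A^1) \subseteq V(A^1)[F]\), where \(V(A^1)\) is the basic Cohen model and \(F\) is \(\mathbb{P}\)-generic over \(V(A^1)\) for the poset \(\mathbb{P}\) of Definition~\ref{def : forcinggroup}. Setting \(G = \langle F'\rangle\) and \(A = A_G\), the countable Archimedean group \(G\) lives in \(V(A^1)[F] = V(A)\), which is a generic extension of \(V\) (as the basic Cohen model is itself a symmetric submodel of a Cohen extension of \(V\), and iterating forcing yields forcing). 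Corollary~\ref{cor : irred to cong}(1) would then hand me a set of reals \(B \in V(A)\), definable from \(A\) and parameters in \(V\) alone, such that \(V(A) = V(B)\). But Lemma~\ref{lem : not generated by set of reals} says that every such \(B\) already lies in \(V(A^1)\), and in particular \(V(A) \neq V(B)\), contradicting the conclusion of the corollary.

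There is essentially no further obstacle: the entire technical content — the construction of \(\mathbb{P}\), the no-new-reals property (Lemma~\ref{lem;no-new-reals}), and the automorphism-swapping argument pinning down \(B\) to \(V(A^1)\) — is packaged into Lemma~\ref{lem : not generated by set of reals}. The corollary itself is just the logical pairing of that lemma with the general Shani-style implication in Corollary~\ref{cor : irred to cong}(1), and so reduces to checking that the framework applies, which was already done in Paragraph~1.
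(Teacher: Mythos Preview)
Your proposal is correct and matches the paper's approach exactly: the paper derives the corollary in one line from Lemma~\ref{lem : not generated by set of reals} and the contrapositive of Corollary~\ref{cor : irred to cong}(1), and you have simply unpacked that logical structure. The paper does add a remark (immediately following the corollary) noting one technicality you pass over---to apply Corollary~\ref{cor : irred to cong} one formally needs a countable enumeration $\vec{x}_G$ of $G$ as an element of $\mathcal{A}$, which requires a further generic extension of $V(A)$---but since Lemma~\ref{lem : not generated by set of reals} refers only to $V(A)$ this extra step is harmless.
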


\begin{remark}
Technically speaking, Corollary~\ref{cor : irred to cong} should be applied in a further generic extension of  \( V(A) = V(A^1)[F] \) where we add a countable enumeration \( \vec{x}_G \) of \( G \) by forcing with finite conditions. This is because then \( A = A_G = A_{\vec{x}_G} \) properly becomes an invariant of \( \cong_\mathsf{ArGp} \) according to Section~\ref{subsec : explicitinvariants}. Notice that Lemma~\ref{lem : not generated by set of reals} still holds in such generic extension because it just refers to assertions concerning \( V(A) \), which is not affected by the further forcing. Similar considerations apply to the proof of Theorem~\ref{thm : main} below and to the proof of Proposition~\ref{prop : bi-embed non reduction}. 
\end{remark}

%\begin{proof}
%Assume for a contradiction that there is such a reduction.
%As $A$ is an invariant for $\cong_{\mathcal{A}}$, it follows from Corollary~\ref{cor : irred to cong} (1) that there is a set of reals $B\in V(A)$, definable from $A$ and parameters in $V$ alone, such that $V(A)=V(B)$, contradicting Lemma~\ref{lem : not generated by set of reals} above.
%\end{proof}

We now move to the proof of Theorem~\ref{thm : main}.

\begin{proof}[Proof of Theorem~\ref{thm : main}]
By the contrapositive of Corollary~\ref{cor : irred to cong}(2), it is enough to prove that $V(A)\neq V(B)$ whenever $B\in V(A)$ is a set of sets of reals definable from $A$ and parameters in \( V \) alone such that \( B \) is countable in \( V(A) \).

Assume towards a contradiction that $V(A)=V(B)$. In particular $G\in V(B)$ and therefore is definable as the unique solution to $\phi(G,\bar{U},\bar{x},v)$, where $\bar{U}=U_1,...,U_k$ are finitely many members of $B$, $\bar{x}=x_1,...,x_l$ are finitely many reals in the transitive closure of $B$, and $v\in V$.
Fix a condition $p=(S_p,G_p)$ in the generic forcing the above.
Note that $\bar{x}$ is in the ground model $V(A^1)$ by Lemma~\ref{lem;no-new-reals}.

By Corollary~\ref{cor : genericity inf often} there are infinitely many $d\in G$ such that the generic $G/d=\delta_d(G)$ extends $p$. Since $p$ forces that $\dot{G}$ is defined in $V(A)$ as the unique solution to $\phi(G,\dot{\bar{U}},\check{\bar{x}},v)$ it follows that there is some $\bar{U}_d$ from $B$ such that $G/d$ is the unique solution in $V(A)$ to $\phi(G/d,\bar{U}_d,\bar{x},v)$. (This follows from similar arguments as in Lemma~\ref{lem : not generated by set of reals}. Note that $G$ and $G/d$ calculate the same $A$ and so the same $B$.) 

Working in $V(A)$, $B$ is countable and so is $B^k$. Let $\bar{U}_i$, $i<\omega$ be an enumeration of the elements $\Tilde{\bar{U}}$ in $B^k$ for which there exists $d\in A^1$ such that $G/d$ is defined by $\phi(G/d,\Tilde{\bar{U}},\bar{x},v)$.
For any $i<\omega$, let $d_i$ be the unique $d\in A^1$ such that $\phi(G/d,\bar{U}_i,\bar{x},v)$ holds (uniqueness follows from Lemma~\ref{claim : genericity}).
Then $\seqq{d_i}{i<\omega}$ is a real, and therefore is in $V(A^1)$ by Lemma~\ref{lem;no-new-reals}. However, in the Cohen model $V(A^1)$ there are no infinite sequences in $A^1$, a contradiction.
\end{proof}
 
 The following question remains open.
 
 \begin{question}
Is \(\cong^*_{3,1}\) (or even \(\cong^*_{3,0}\)) Borel reducible to \(\cong_\mathsf{ArGp}\)?
\end{question}

\section{Embeddability on countable Archimedean groups} \label{sec : embeddability on ArGp}

Denote by \( \embeds_{\mathsf{ArGp}} \) the embeddability relation on \( X_{\mathsf{ArGp}} \), namely, for \( G,H \in X_\mathsf{ArGp} \) set \( G \embeds_\mathsf{ArGp} H \) if and only if there is an order preserving homomorphism from \( G \) to \( H \). (Recall that since the linear orders on our groups are strict, any such homomorphism is automatically an embedding.)
By the discussion following Proposition~\ref{prop : continuous}, the relation \( \embeds_{\mathsf{ArGp}} \) is a Borel quasi-order, that is, a reflexive and transitive relation which is Borel as a subsets of \( X_\mathsf{ArGp} \times X_\mathsf{ArGp} \). Generalizing the notion of Borel reducibility to quasi-orders one gets a way to assess the complexity of \( \embeds_{\mathsf{ArGp}} \), and consequently of the associated bi-embeddability relation \( \equiv_{\mathsf{ArGp}} \).

\begin{defn}
Given binary relations \(P, Q\) on standard Borel spaces \(X, Y\), respectively, a map \(f \colon X \to Y\) is said to be a \emph{reduction} from \(P\) to \(Q\) if for all \(x, y \in X\),
\[
x \mathrel{P} y \iff f(x) \mathrel{Q} f(y) .
\]
When there is a Borel reduction from \( P \) to \( Q \), we say that \(P\) is \emph{Borel reducible} to \(Q\) and write \( P \leq_B Q \). The bi-reducibility relation \( \sim_B \) is defined accordingly.
\end{defn}

If \( P \) is a quasi-order on \( X \) we denote by \( E_P \) the canonical equivalence relation induced by \( P \), namely, \( x \mathrel{E_P} y \) if and only if both \( x \mathrel{P} y \) and \( y \mathrel{P} x \). It is clear that if \( P \leq_B Q \) then \( E_P \leq_B E_Q \). (But the converse might fail, depending on \( P \) and \( Q \).)

Rosendal~\cite{Ros} introduced a  jump operator for quasi-orders which is the analogue, in that context, of the Friedman-Stanley jump. If \( P \) is a quasi-order on a standard Borel space \(X \), we define its jump \( P^{\mathrm{cf}} \) on the space \( X^\N \) setting 
\[ 
(x_i)_{i \in \N} \mathrel{P^\mathrm{cf}} (y_i)_{i\in \N} \iff \forall i \in \N \, \exists j \in \N \, (x_i \mathrel{P} y_j).
 \] 
Rosendal's jump is the asymmetric version of the Friedman-Stanley. In fact, if \( E \) is an equivalence relation, then \( E^+\) coincides with \( E_{E^\mathrm{cf}} \).

Starting with \( =_\R \), we can define a hierarchy of Borel quasi-orders \( =_\R^{\alpha\text{-}\mathrm{cf}} \), \(\alpha < \omega_1 \), by applying Rosendal's jump at successor stages and the usual countable product at limit stages. For example, the quasi-order \( =^{\mathrm{cf}}_\R \) is the first element in the hierarchy after \(=_\R \) and is basically the inclusion relation between countable sets of reals, so that the canonical equivalence relation associated to it is exactly \( =_\R^+ \), that is,
\[
{=_\R^+} = E_{=_\R^\mathrm{cf}}. 
\]
By~\cite[Proposition 6 and Corollary 15]{Ros}, the mentioned hierarchy is strict (that is, \( {=_\R^{\alpha\text{-}\mathrm{cf}}} <_B {=_\R^{\alpha+1\text{-}\mathrm{cf}}} \) for all countable \( \alpha \)) and \( \leq_B \)-cofinal among all Borel quasi-orders. 
%It follows that the associated hierarchy \( E_{=_\R^{\alpha\text{-}\mathrm{cf}}} \), \( \alpha < \omega_1 \), is cofinal among all Borel equivalence relations. 
The arguments developed so far allow us to locate the Borel quasi-order \( \embeds_\mathsf{ArGp} \) in such hierarchy and prove Theorem~\ref{thm : main-embed}.

\subsection{Bounds for \( \embeds_\mathsf{ArGp} \) and \( \equiv_\mathsf{ArGp} \)}

First we consider lower bounds. As for the isomorphism relation, when useful we can replace \( \embeds_\mathsf{ArGp} \) and \( \equiv_\mathsf{ArGp} \) with the corresponding relations \( \embeds_\mathcal{A} \) and \( \equiv_\mathcal{A} \) on the coding space \( \mathcal{A} \). In fact, by the discussion in Section~\ref{sec:boundsforisomorphism} (and in particular Proposition~\ref{prop : continuous}) we get
\[ 
{\embeds_\mathsf{ArGp}} \sim_B {\embeds_\mathcal{A}} \qquad \text{and} \qquad {\equiv_\mathsf{ArGp}} \sim_B {\equiv_\mathcal{A}}.
 \]

\begin{prop} \label{prop : incomparable}
\( {=_\R^\mathrm{cf}} \leq_B {\embeds_\mathsf{ArGp}} \), thus \(  E_{=_\R^\mathrm{cf}} \leq_B {\equiv_\mathsf{ArGp}} \).
\end{prop}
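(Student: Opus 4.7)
The plan is essentially to reuse the reduction from Proposition~\ref{prop : lowbnd}, noting that Proposition~\ref{prop : fields} actually gives more than needed for isomorphism: it characterizes embeddability of subfields of \(\R\) (as ordered groups) by inclusion. So the same map should already work for the quasi-order \( =_\R^\mathrm{cf} \).

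Concretely, fix once and for all a Borel isomorphism \(f \colon \R \to T\), where \(T \subseteq \R\) is a perfect algebraically independent set as provided by Fact~\ref{fact : perfect}. Define the Borel map
\[
\Phi \colon \R^\N \to \mathcal{A}, \qquad (x_n)_{n\in\N} \mapsto \Q(\set{f(x_n)}{n \in \N})
\]
(more precisely, a Borel enumeration of this countable subgroup of \(\R\)). I claim \(\Phi\) is a Borel reduction from \( =_\R^\mathrm{cf} \) to \( \embeds_\mathcal{A} \), which by Proposition~\ref{prop : continuous} gives \( =_\R^\mathrm{cf} \leq_B \embeds_\mathsf{ArGp} \). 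The consequence \( E_{=_\R^\mathrm{cf}} \leq_B \equiv_\mathsf{ArGp} \) is then immediate from the observation that any Borel reduction between quasi-orders automatically reduces the induced equivalence relations.

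For the forward direction, suppose \(\set{x_n}{n\in\N} \subseteq \set{y_m}{m\in\N}\). Since \(f\) is injective, \(\set{f(x_n)}{n\in\N} \subseteq \set{f(y_m)}{m\in\N}\), so \(\Q(\set{f(x_n)}{n\in\N}) \subseteq \Q(\set{f(y_m)}{m\in\N})\) and the identity embedding witnesses \(\Phi(\vec{x}) \embeds \Phi(\vec{y})\). For the backward direction, suppose there is an order preserving homomorphism \(\Phi(\vec{x}) \to \Phi(\vec{y})\). By Proposition~\ref{prop : fields} applied to these subfields of \(\R\), we have \(\Q(\set{f(x_n)}{n \in \N}) \subseteq \Q(\set{f(y_m)}{m\in\N})\).

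The main (and only) point where some care is needed is to deduce \(\set{x_n}{n\in\N} \subseteq \set{y_m}{m\in \N}\) from the inclusion of fields. Fix any \(n\) and suppose towards a contradiction that \(x_n \notin \set{y_m}{m\in\N}\). Since \(f\) is injective, \(f(x_n) \notin \set{f(y_m)}{m\in \N}\). However \(f(x_n) \in \Q(\set{f(y_m)}{m \in \N})\), so there are \(m_1,\dots,m_k\) and a nontrivial polynomial relation
\[
P(f(x_n), f(y_{m_1}), \dots, f(y_{m_k})) = 0
\]
with rational coefficients, obtained by clearing denominators from the rational function expressing \(f(x_n)\) in terms of the \(f(y_{m_i})\). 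But \(\{f(x_n), f(y_{m_1}),\dots, f(y_{m_k})\}\) is a finite subset of the algebraically independent set \(T\), and the elements are pairwise distinct (because \(f(x_n) \notin \set{f(y_m)}{m \in \N}\) and \(f\) is injective), contradicting Fact~\ref{fact : perfect}. Hence \(x_n \in \set{y_m}{m \in \N}\), as required.
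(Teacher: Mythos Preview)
Your proof is correct and follows essentially the same approach as the paper: use the map $(x_n)_{n\in\N}\mapsto \Q(\set{f(x_n)}{n\in\N})$ from Proposition~\ref{prop : lowbnd}, and invoke Proposition~\ref{prop : fields} to turn embeddability of the fields into inclusion. You have simply spelled out in detail the step (left implicit in the paper) that algebraic independence of $T$ lets one recover $\set{x_n}{n\in\N}$ from the generated field, so that $\Q(\set{f(x_n)}{n\in\N})\subseteq\Q(\set{f(y_m)}{m\in\N})$ forces $\set{x_n}{n\in\N}\subseteq\set{y_m}{m\in\N}$.
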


\begin{proof}
We argue as in the proof of Proposition~\ref{prop : lowbnd}. By Proposition~\ref{prop : fields} we obtain that  if the sequences \( (x_n)_{n \in \N}, (y_n)_{n \in \N} \in \R^\N \) enumerate two sets \(A,B \subseteq \R \), respectively, then \( A \subseteq B \) if and only if 
\(  \Q(\set{f(x_n)}{n \in \N}) \) embeds into \(  \Q(\set{f(y_n)}{n \in \N}) \).
This shows \( {=_\R^\mathrm{cf}} \leq_B {\embeds_\mathcal{A}} \), and by symmetrization we also get \( E_{=_\R^\mathrm{cf}} \leq_B {\equiv_\mathcal{A}} \).
\end{proof}

\begin{remark}
\label{rk : incomparable}
In particular, by (the proof of) Proposition~\ref{prop : incomparable} there are infinitely many countable divisible Archimedean groups which are pairwise incomparable with respect to embeddability, a fact that will be crucially used in the proof of Proposition~\ref{prop : CLO and ODAG}.
\end{remark}

%First we observe that the proof of Proposition~\ref{prop : lowbnd} (see also Remark~\ref{rk : incomparable}) actually shows that \( {=_\R^\mathrm{cf}} \leq_B {\embeds_\mathsf{ArGp}} \).
% In Section~\ref{subsec : bi-embeddability} we will show \( {\equiv_{\mathsf{ArGp}}} \not\leq_B {=^+_\R} \). Since \( =^+_\R \) coincides with \( E_{=_\R^\mathrm{cf}} \), it follows that \( {\embeds_{\mathsf{ArGp}}} \not \leq_B {=_\R^\mathrm{cf}} \), thus
% \[ 
%{=_\R^\mathrm{cf}} <_B {\embeds_\mathsf{ArGp}}.
% \] 

As for upper bounds, we argue as in Section~\ref{subsec : explicitinvariants} using again the assignment \( G \mapsto A_G \) defined in equation~\eqref{eq : A_G}.

\begin{lemma} \label{lem : emb}
Let $G,H$ be subgroups of $\mathbb{R}$. Then there is an order preserving homomorphism from $G$ to $H$ if and only if for every $X\in A_G$ there exists $Y\in A_H$ such that $X\subseteq Y$.
\end{lemma}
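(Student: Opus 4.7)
The plan is to establish both directions separately, with Hion's Lemma~\ref{lem : Hion} doing most of the work: it forces any order preserving homomorphism between subgroups of $\R$ to be multiplication by a positive real.

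For the forward direction, suppose $f \colon G \to H$ is an order preserving homomorphism. By Hion's Lemma there is $\lambda \in \R^+$ with $f(g) = \lambda g$ for all $g \in G$. Given any $X = G/r \in A_G$ with $r \in G \setminus \{0\}$, I would set $s = \lambda r = f(r) \in H \setminus \{0\}$ and take $Y = H/s \in A_H$. For each $g \in G$ the computation $g/r = (\lambda g)/(\lambda r) = f(g)/s$ exhibits $g/r$ as an element of $H/s = Y$, so $X \subseteq Y$ as required.

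For the backward direction, I would first note that $G/r = G/(-r)$ since $G$ is closed under negation, so sets in $A_G$ and $A_H$ can always be represented using positive elements of $G$ and $H$ respectively. Pick any positive $r \in G$, set $X = G/r$, and by hypothesis choose $Y = H/s \in A_H$ with $s > 0$ and $X \subseteq Y$. Define $f \colon G \to H$ by
\[
f(g) = \frac{g}{r}\cdot s.
\]
To see that $f$ actually lands in $H$, observe that $g/r \in G/r \subseteq H/s$ means $g/r = h/s$ for some $h \in H$, whence $f(g) = h \in H$. The map $f$ is additive (being a composition of $\Q$-linear operations on $\R$) and preserves the order because both $r$ and $s$ are positive; hence $f$ is the desired order preserving homomorphism.

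The only subtle point is verifying that $f$ actually takes values in $H$, which is precisely where the assumption $G/r \subseteq H/s$ enters; everything else is a routine manipulation. Keeping sign conventions straight by working with positive representatives throughout is the main bookkeeping task, but no deeper obstacle is expected.
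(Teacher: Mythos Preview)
Your proof is correct and essentially identical to the paper's: both directions use Hion's Lemma in the forward direction to get $\lambda$ and set $Y=H/(\lambda r)$, and in the backward direction both pick positive $r,s$ with $G/r\subseteq H/s$ and define the homomorphism $g\mapsto (s/r)g$. The only cosmetic difference is that the paper applies the observation $H/s=H/(-s)$ solely to $s$ (since $r$ is chosen positive directly), whereas you mention it for both; the content is the same.
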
 

\begin{proof}
Suppose \( \phi \colon G \to H \) is an order preserving homomorphism. By Hion's lemma~\ref{lem : Hion} there is \( \lambda \in \R^+ \) such that \( \phi(g) = \lambda g \) for all \( g \in G \).
Given $X\in A_G$, let \( r \in G \setminus \{ 0 \} \) be such that  $X = G/r$. In particular, \( \lambda r \in H \setminus \{ 0 \}\), and thus \( Y = H/(\lambda r) \in A_H \). Then
\begin{equation*}
    X= \set{g/r}{g \in G}=  \set{ \lambda g /\lambda r}{g \in G} \subseteq \set{h/\lambda r}{h \in H} =  H/(\lambda r)=Y.
\end{equation*}

Conversely, pick any \( G/r \in A_G \) and let \(H/s \in A_H \) be such that \( G/r \subseteq H/s \). Without loss of generality we may assume that \(r\) and \(s\) are positive. Then for every \( g \in G \) we have \( \frac{s}{r} g \in H \). Since \( \frac{s}{r} > 0 \), the map \( g \mapsto \frac{s}{r} g \) is thus an order preserving homomorphism from \( G \) to \( H \).
\end{proof}

\begin{prop}\label{prop : embed reducible to 2nd jump}
\( {\embeds_\mathsf{ArGp}} \leq_B {=_\R^{2\text{-}\mathrm{cf}}} \), thus \( {\equiv_\mathsf{ArGp}} \leq_B E_{=_\R^{2\text{-}\mathrm{cf}}} \).
\end{prop}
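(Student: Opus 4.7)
My plan is to combine Lemma~\ref{lem : emb} with a direct unfolding of the definition of \(=_\R^{2\text{-}\mathrm{cf}}\). The crucial observation is that \(=_\R^{2\text{-}\mathrm{cf}}\), being the Rosendal jump of \(=_\R^\mathrm{cf}\), is the quasi-order on \((\R^\N)^\N\) given by \((X_n)_{n \in \N} \mathrel{=_\R^{2\text{-}\mathrm{cf}}} (Y_m)_{m \in \N}\) if and only if for every \(n\) there is \(m\) such that the set of reals enumerated by \(X_n\) is contained in the set of reals enumerated by \(Y_m\). This matches perfectly the two-nested-quantifier structure appearing in Lemma~\ref{lem : emb}.

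First I would invoke Proposition~\ref{prop : continuous} to reduce the task to constructing a Borel reduction \(f \colon \mathcal{A} \to (\R^\N)^\N\) from \(\embeds_\mathcal{A}\) to \(=_\R^{2\text{-}\mathrm{cf}}\). Mimicking Section~\ref{subsec : explicitinvariants}, given \(\vec{x} = (x_n)_{n \in \N} \in \mathcal{A}\) I would attach to each \(n \geq 1\) the sequence
\[
\vec{A}^{\vec{x}}_n = \left( \tfrac{x_k}{x_n} \right)_{k \in \N},
\]
which is a Borel enumeration of the element \(A^{\vec{x}}_n = \set{x_k/x_n}{k \in \N} \in A_{\vec{x}}\) of the complete invariant for isomorphism. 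Then I would set \(f(\vec{x}) = (\vec{A}^{\vec{x}}_n)_{n \geq 1}\), which is manifestly Borel.

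To verify that \(f\) is a reduction, note that since \(\vec{x}\) enumerates all of \(G = \langle \vec{x}\rangle\), every nonzero \(r \in G\) appears as \(x_n\) for some \(n \geq 1\), so the family \(\{A^{\vec{x}}_n : n \geq 1\}\) coincides with the invariant \(A_G\) from Section~\ref{subsec : explicitinvariants}. Lemma~\ref{lem : emb} then gives \(\vec{x} \embeds_\mathcal{A} \vec{y}\) if and only if for every \(n \geq 1\) there is \(m \geq 1\) with \(A^{\vec{x}}_n \subseteq A^{\vec{y}}_m\), which is precisely \(f(\vec{x}) \mathrel{=_\R^{2\text{-}\mathrm{cf}}} f(\vec{y})\). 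The second part of the statement follows immediately because any Borel reduction between quasi-orders descends to their canonical symmetrizations, and \(E_{\embeds_\mathsf{ArGp}} = \equiv_\mathsf{ArGp}\) by definition.

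I do not anticipate any serious obstacle, since Lemma~\ref{lem : emb} already does the conceptual work and the match with the two-level structure of \(=_\R^{2\text{-}\mathrm{cf}}\) is exact. The only routine verifications are the Borelness of \(\vec{x} \mapsto \vec{x}_G\), which is granted by Proposition~\ref{prop : continuous}, and the Borelness of the pointwise division operation, which is immediate.
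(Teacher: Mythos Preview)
Your proposal is correct and follows essentially the same approach as the paper: both construct the reduction by enumerating the invariant \(A_{\vec{x}_G}\) as an element of \((\R^\N)^\N\) via the canonical enumeration induced by \(\vec{x}_G\), and both invoke Lemma~\ref{lem : emb} to verify that this map reduces \(\embeds_\mathcal{A}\) to \(=_\R^{2\text{-}\mathrm{cf}}\). Your write-up simply spells out in more detail what the paper compresses into two sentences.
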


\begin{proof}
Given \( G \in X_\mathsf{ArGp} \), construe the set \( A_{\vec{x}_G} \) from equation~\eqref{eq : AxG} as an element of \( (\R^\N)^\N \), that is, fix the enumeration  canonically induced by the sequence \( \vec{x}_G \) of all countable sets involved in the definition of \( A_{\vec{x}_G} \). By Lemma~\ref{lem : emb}, the resulting map is the desired Borel reduction.
\end{proof}

%%%%%%%%%%%%%%%%%%
%Argument why the last inequality is strict for both lines

\subsection{Irreducibility results} \label{subsec : irreducibility for embeddability}

As above we use the complete invariants $G\mapsto A_G=\set{G/r}{r\in G\setminus\{0\}}$ for order isomorphism on countable subgroups $G\subseteq\mathbb{R}$, but restricting to a subspace \( \mathcal{X} \) of \( \mathcal{A} \) where the notions of order isomorphism and bi-embeddability coincide.

\begin{prop}
\label{prop : inv embed}
Suppose that $G, H \leq \R$ are bi-embeddable but not isomorphic. Then for any $X\in A_G$ there is $X'\in A_G$ with $X\subsetneq X'$.
\end{prop}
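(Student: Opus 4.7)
The plan is to trace through how an embedding $G \hookrightarrow H$ acts on the invariant $A_G$, compose with an embedding back $H \hookrightarrow G$, and show that equality $X = X'$ at the end would force the embeddings to be isomorphisms.

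First, I would apply Hion's Lemma~\ref{lem : Hion}: any order-preserving homomorphism between subgroups of $\R$ is multiplication by a positive scalar. So the bi-embeddability hypothesis gives $\lambda, \mu \in \R^+$ such that $\lambda G \subseteq H$ and $\mu H \subseteq G$. Given $X = G/r \in A_G$, I would then mimic the computation in the proof of Lemma~\ref{lem : emb}: set $Y = H/(\lambda r) \in A_H$, and observe that
\[
X = G/r = \set{\lambda g /\lambda r}{g \in G} \subseteq \set{h/\lambda r}{h \in H} = Y.
\]
Applying the same argument to $Y$ and the embedding $h \mapsto \mu h$, set $X' = G/(\lambda \mu r) \in A_G$; then $Y \subseteq X'$, and hence $X \subseteq X'$.

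The key step is to show $X \neq X'$. Assume toward a contradiction that $X = X'$, i.e.\ $G/r = G/(\lambda \mu r)$. Multiplying both sides by $r$ (as subsets of $\R$), this equality is equivalent to $G = (\lambda \mu)^{-1} G$, that is, $\lambda \mu G = G$. Combining the chain of inclusions $\lambda \mu G \subseteq \mu H \subseteq G$ with $\lambda \mu G = G$ forces $\mu H = G$. But the map $h \mapsto \mu h$ is an order-preserving embedding of $H$ into $G$, and this surjectivity upgrades it to an order isomorphism $H \cong G$, contradicting the hypothesis that $G$ and $H$ are not isomorphic. Therefore $X \subsetneq X'$.

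The proof involves no real obstacle once Hion's Lemma is invoked: the main subtlety is just booking the scalars so that the equality $X = X'$ in $A_G$ translates to the multiplicative identity $\lambda \mu G = G$ between subgroups of $\R$, which then collapses the two-step composition to an isomorphism. No additional forcing or descriptive set-theoretic machinery is needed for this proposition; it is a purely algebraic consequence of H\"older's theorem together with Hion's lemma.
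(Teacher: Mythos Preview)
Your proof is correct and follows essentially the same approach as the paper: build the chain $X \subseteq Y \subseteq X'$ from the two embeddings and show that equality forces $G \cong H$. The only minor difference is in how the contradiction is derived: the paper observes that $A_G$ can be reconstructed from any single one of its elements (since $A_G = \set{X/x}{x \in X \setminus \{0\}}$ for any $X \in A_G$), so already $X = Y$ would give $A_G = A_H$ and hence $G \cong H$ by Proposition~\ref{prop : invariants}; you instead keep track of the explicit Hion scalars and squeeze $\mu H = G$ out of $\lambda\mu G = G$. Both arguments are short and equivalent in strength.
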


\begin{proof}
Note that $A_G=\set{X/x}{x\in X\setminus\{0\}}$ for any $X\in A_G$.
Thus if $X\in A_G$, $Y\in A_H$, and $X=Y$, then $A_G=A_H$, and therefore $G$ and $H$ are order isomorphic by Proposition~\ref{prop : invariants}.
By Lemma~\ref{lem : emb} and the fact that \( G \) and \( H \) are bi-embeddable, for any $X\in A_G$ there is $Y\in A_H$ with $X\subseteq Y$, and for such \( Y \) there is $X'\in A_G$ with $Y\subseteq X'$.
Since $G$ and $H$ are not isomorphic, it must be that $X\subsetneq Y\subsetneq X'$, so $X\subsetneq X'$.
\end{proof}

Let $\mathcal{X} \subseteq \mathcal{A}$ be the space of all (countable) subgroups $G$ of $\mathbb{R}$ satisfying the following property: for any $r,q\in G \setminus \{ 0 \}$
\[
\text{if } G/r\subseteq G/q \text{ then } G/r=G/q.
\]
Then \( \cong_\mathcal{A} \) and $\equiv_\mathcal{A}$ coincide on $\mathcal{X}$ by Proposition~\ref{prop : inv embed}. In particular, the map $G \mapsto A_G$ is a complete classification of $\equiv_\mathcal{X}$, the restriction of \( \equiv_\mathcal{A} \) to \( \mathcal{X} \).

Consider again the forcing \( \mathbb{P} \) from Definition~\ref{def : forcinggroup}, let $F$ be $\mathbb{P}$-generic over $V(A^1)$, $F'\subset\mathbb{R}$ be the union of all $G_p$ for $p\in F$, and define $\tilde{G}=\bigcup_{r'\in F'}\mathbb{Q}\cdot r'$. That is, $\tilde{G}$ is the divisible hull of the group $G$ from Section~\ref{subsec : complicatedinvariant}.

\begin{lemma}
For any \( r,q \in \tilde{G} \), if \( \tilde{G}/r \subseteq \tilde{G}/q \) then \( \tilde{G}/r = \tilde{G}/q \).
%For any $X,Y\in \tilde{A}$, $X\subseteq Y\implies X=Y$.
\end{lemma}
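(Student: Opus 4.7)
The plan is to show that the scalar \(\alpha = q/r\) must be rational: since \(\tilde{G}/r \subseteq \tilde{G}/q\) is equivalent to \(\alpha \tilde{G} \subseteq \tilde{G}\), and the conclusion \(\tilde{G}/r = \tilde{G}/q\) to \(\alpha \tilde{G} = \tilde{G}\), the implication \(\alpha \in \Q^* \implies \alpha \tilde{G} = \tilde{G}\) (valid since \(\tilde{G}\) is a \(\Q\)-vector space) will finish the proof.

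The key algebraic input is that the mutually Cohen-generic reals in \(A^1\) are algebraically independent over \(\Q\), so the monomials \(a_1^{\ell_1} \cdots a_k^{\ell_k}\) (with distinct \(a_i \in A^1\) and \(\ell_i \in \Z\)) are \(\Q\)-linearly independent in \(\R\). Hence \(F'\), a set of such monomials, is a \(\Q\)-basis of \(\tilde{G}\). Because \(1 \in G_p\) for every \(p \in \mathbb{P}\), we have \(1 \in F'\), and the hypothesis yields \(\alpha = \alpha \cdot 1 \in \tilde{G}\). Writing \(\alpha = \sum_{t=1}^n c_t p_t\) uniquely in the monomial basis (with \(c_t \in \Q^*\) and distinct \(p_t \in F'\)), we apply the hypothesis to each \(m \in F'\) to obtain \(\alpha m = \sum_t c_t (p_t m) \in \tilde{G}\). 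The products \(p_t m\) are again distinct monomials, so uniqueness of representation forces \(p_t m \in F'\) for every \(t\); equivalently, \(p_t F' \subseteq F'\) for each \(t\).

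It therefore suffices to prove the density claim: for every nontrivial monomial \(p\), the set of conditions forcing \(pF' \not\subseteq F'\) is dense in \(\mathbb{P}\). A case analysis on \(q_0\) and \(p\) gives the required extension. If \(\supp(p) \not\subseteq S_{q_0}\), add \(\supp(p)\) to \(S_{q_0}\) without modifying \(G\), forcing \(p \notin F'\) and hence \(p = p \cdot 1 \in pF' \setminus F'\). If \(\supp(p) \subseteq S_{q_0}\) and \(p \notin G_{q_0}\), then \(q_0\) itself forces \(p \notin F'\). Otherwise \(p \in G_{q_0}\); adjoin any fresh \(a \in A^1 \setminus S_{q_0}\) to obtain \(q = (S_{q_0} \cup \{a\}, G_{q_0} \cup \{a\})\), so that \(a \in F'\) but \(pa \notin F'\) (using \(p \neq 1\) and \(a \notin S_{q_0}\)), witnessing \(pa \in pF' \setminus F'\). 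Applying genericity to each specific monomial \(p \in V(A^1) \setminus \{1\}\), we conclude \(pF' \not\subseteq F'\) holds for every such \(p\) in \(V(A^1)[F]\); this forces each \(p_t = 1\), so \(n = 1\) and \(\alpha = c_1 \in \Q^*\).

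The main obstacle is executing the density argument carefully: one needs to verify that the proposed extensions satisfy the compatibility condition \(G_q \cap D(S_{q_0}) = G_{q_0}\) and that the witness monomials (\(p\) or \(pa\)) land in \(D(S_q) \setminus G_q\) so as to be forced out of \(F'\). Both reduce to routine bookkeeping about which monomials have support in \(S_{q_0}\), but they need to be performed with precision.
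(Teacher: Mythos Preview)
Your argument is correct, but it takes a genuinely different route from the paper's proof. The paper reduces immediately to the case $r,q\in F'$ (using that $\tilde G/r=\tilde G/r'$ whenever $r\in\Q\cdot r'$), and then, assuming $r'\neq q'$, runs a single density argument to produce a fresh $d\in A^1$ with $dr'\in F'$ and $dq'\notin F'$; this $d$ directly witnesses $\tilde G/r'\not\subseteq\tilde G/q'$. Your proof is more structural: you show that the scalar $\alpha=q/r$ must lie in $\Q^\ast$ by observing that $\alpha\in\tilde G$, expanding it in the monomial basis $F'$, and then using linear independence of monomials to deduce $p_tF'\subseteq F'$ for each basis monomial $p_t$ appearing---which a separate density argument rules out for any nontrivial $p$. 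Both proofs ultimately rest on the same two ingredients (algebraic independence of $A^1$ and a genericity/density step), but the paper's version is shorter and produces an explicit witness, while yours extracts the cleaner conclusion $\alpha\in\Q^\ast$ and makes the role of $F'$ as a $\Q$-basis of $\tilde G$ explicit. Your case analysis for the density claim is sound; the compatibility checks you flag as ``routine bookkeeping'' do go through exactly as you indicate.
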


\begin{proof}
%Fix $X,Y\in \tilde{A}$ and let $g,h\in \tilde{G}$ be such that $X=\tilde{G}/g$ and $Y=\tilde{G}/h$.
Set \( X = \tilde{G}/r \) and \( Y = \tilde{G}/q \).
Fix $r',q' \in F'$ such that $r\in \mathbb{Q}\cdot r'$ and $q\in\mathbb{Q}\cdot q'$. 
It follows from the definition of $\tilde{G}$ that $X=\tilde{G}/r'$ and $Y=\tilde{G}/q'$.
If $r'=q'$ then $X=Y$. So it is enough to show that if 
 $r'\neq q'$ then $X\not\subseteq Y$.

Let $\bar{a}$, $\bar{b}$ be finite subsets of $A^1$ such that $r'\in D(\bar{a})$ and $q'\in D(\bar{b})$.
Note that for any $d\in A^1\setminus(\bar{a}\cup\bar{b})$, the two elements $d\cdot r'$, $d\cdot q'$ are distinct.
By density, we may find $d\in A^1$ such that $d\cdot r' \in F'$ and $d\cdot q' \not\in F'$.
It follows that $d\in X$ and $d\notin Y$, as desired.
\end{proof}

This shows that (any enumeration, in some further generic extension, of) \( \tilde{G} \) belongs to \( \mathcal{X} \), so that
 $\tilde{A} \coloneqq A_{\tilde{G}}=\set{\tilde{G}/r}{r\in\tilde{G}\setminus\{0\}}$ is an invariant with respect to  $\equiv_\mathcal{X}$. 
% That is, if $\tilde{g}$ is an enumeration of $\tilde{G}$ (in some further generic extension), then $\tilde{g}\in\mathcal{X}$, and $\tilde{A}=A_{G_{\tilde{g}}}$.
%The following proposition completes the proof of Theorem~\ref{thm : main-embed}.

\begin{prop}\label{prop : bi-embed non reduction}
${\equiv_\mathcal{X}} \not\leq_B {\cong^\ast_{3,0}}$, and therefore 
${\equiv_{\mathsf{ArGp}}} \not\leq_B{\cong^\ast_{3,0}}$. 
\end{prop}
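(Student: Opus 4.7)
The plan is to adapt the proof of Theorem~\ref{thm : main} to this asymmetric setting, using the divisible hull $\tilde{G}$ in place of $G$ and the invariant $\tilde{A} = A_{\tilde{G}}$. Since on the subspace $\mathcal{X}$ bi-embeddability coincides with order isomorphism (by Proposition~\ref{prop : inv embed}), the map $G \mapsto A_G$ restricted to $\mathcal{X}$ is an absolute classification of $\equiv_\mathcal{X}$ by hereditarily countable sets, so Corollary~\ref{cor : irred to cong}(2) applies: it suffices to show that $V(\tilde{A}) \neq V(B)$ for every $B \in V(\tilde{A})$ which is a countable (in $V(\tilde{A})$) set of sets of reals definable from $\tilde{A}$ and parameters in $V$ alone.

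Assume toward a contradiction that $V(\tilde{A}) = V(B)$ for such a $B$. Then $\tilde{G} \in V(B)$, so I can fix a formula $\phi$, a tuple $\bar{U} = U_1, \ldots, U_k$ of elements of $B$, a tuple $\bar{x}$ of reals in $\tc(B)$, and $v \in V$, such that $\tilde{G}$ is the unique solution in $V(\tilde{A})$ of $\phi(\tilde{G}, \bar{U}, \bar{x}, v)$. By Lemma~\ref{lem;no-new-reals} the reals $\bar{x}$ already lie in $V(A^1)$. I would fix a condition $p \in F$ forcing this definition.

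Next I would invoke the automorphisms $\delta_d$ for $d \in A^1$, observing as in Section~\ref{sec : proof of main thm} that they extend naturally to automorphisms sending the generic group $G$ to $G/d$, and hence the divisible hull $\tilde{G}$ to $\tilde{G}/d$; crucially, they fix the invariant $\tilde{A}$, and therefore $B$. By Corollary~\ref{cor : genericity inf often} there are infinitely many $d \in A^1$ for which $\delta_d(F)$ is a generic filter containing $p$. For each such $d$, an argument parallel to the one in Lemma~\ref{lem : not generated by set of reals} and in the proof of Theorem~\ref{thm : main} produces a tuple $\bar{U}_d \in B^k$ such that $\tilde{G}/d$ is the unique solution to $\phi(\,\cdot\,, \bar{U}_d, \bar{x}, v)$ in $V(\tilde{A})$.

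Now I would exploit the countability of $B$, and hence of $B^k$, in $V(\tilde{A})$: enumerate as $\seqq{\bar{U}_i}{i \in \N}$ all tuples in $B^k$ for which some $d \in A^1$ yields $\phi(\tilde{G}/d, \bar{U}_i, \bar{x}, v)$, and for each $i$ let $d_i \in A^1$ be the unique such witness. Uniqueness here is the analogue of Lemma~\ref{claim : genericity}: I would verify that for distinct $a,b \in A^1$ one still has $\tilde{G}/a \neq \tilde{G}/b$, by showing via a density argument over $\mathbb{P}$ that there is some $d \in A^1$ with $d \cdot a \in F'$ (so $d \cdot a \in \tilde{G}$) while no rational multiple of $d \cdot b$ lies in $F'$ (so $d \cdot b \notin \tilde{G}$). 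The resulting sequence $\seqq{d_i}{i \in \N}$ is a real, hence lies in $V(A^1)$ by Lemma~\ref{lem;no-new-reals}; but this contradicts the Dedekind-finiteness of $A^1$ in the basic Cohen model. The main obstacle I anticipate is verifying the analogue of Lemma~\ref{claim : genericity} for $\tilde{G}$ and checking that the automorphism argument producing $\bar{U}_d$ goes through cleanly despite the extra parameters $\bar{x}$; both should follow from the basic genericity facts in Section~\ref{subsec : complicatedinvariant} combined with the Continuity Lemma~\ref{lem;continuity-Cohen-model}.
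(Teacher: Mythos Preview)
Your proposal is correct and follows essentially the same approach as the paper, which simply says to repeat the proof of Theorem~\ref{thm : main} with $\tilde{G}$ and $\tilde{A}$ in place of $G$ and $A$. You spell out the details the paper leaves implicit, and you correctly identify the one point requiring a small extra check, namely the analogue of Lemma~\ref{claim : genericity} for $\tilde{G}$; your proposed density argument for that (using algebraic independence of the Cohen reals to ensure $db\notin\tilde{G}$ whenever $db\notin F'$) is sound.
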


\begin{proof}
%As in the proof of Theorem~\ref{thm : main},
% after Lemma~\ref{claim;irred-to-3,0}, 
By the contrapositive of Corollary~\ref{cor : irred to cong}(2),
it suffices to show that if $B\in V(\tilde{A})$ is a countable (in \( V(\tilde{A}) \)) set of sets of reals, definable from $\tilde{A}$ and parameters  in \( V \) alone, then $\tilde{G}\notin V(B)$, and therefore $V(\tilde{A})\neq V(B)$. For this, it is enough to repeat  
the proof of Theorem~\ref{thm : main},
%argument
%The argument is almost identical to that 
%in Lemma~\ref{claim;irred-to-3,0}
%. That is, the argument still goes through if we 
but replacing $A$ with $\tilde{A}$ and $G$ with $\tilde{G}$.
\end{proof}

\begin{cor}
\( {\embeds_\mathsf{ArGp}} \not\leq_B {=_\R^\mathrm{cf}} \).
\end{cor}
\begin{proof}
Since \({=^+}\leq_B {\cong^\ast_{3,0}}\), it follows from Proposition~\ref{prop : bi-embed non reduction} that \({\equiv_{\mathrm{ArGp}}}\not\leq_B {=^+}\). As $=^+$ is equal to \( E_{=_\R^\mathrm{cf}}\), the symmetrization of \(=_\R^\mathrm{cf}\), it follows that \( {\embeds_\mathsf{ArGp}} \not\leq_B {=_\R^\mathrm{cf}} \).
\end{proof}

Finally, we show that $E_{=_\R^{2\text{-}\mathrm{cf}}}$, the equivalence relation canonically induced by $=_\R^{2\text{-}\mathrm{cf}}$, is not Borel reducible to $\equiv_{\mathrm{ArGp}}$. 
This will easily follow from the next result.
\begin{lemma}\label{claim: =++ reduces to E=2cf}
${=_\R^{++}} \leq_B E_{=_\R^{2\text{-}\mathrm{cf}}}$.
\end{lemma}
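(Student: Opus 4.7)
The plan is to construct a Borel reduction $f \colon \R^{\N \times \N} \to \R^{\N \times \N}$ from $=_\R^{++}$ to $E_{=_\R^{2\text{-}\mathrm{cf}}}$ via a \emph{characteristic-function coding}. Fix once and for all a Borel injection $\langle \cdot, \cdot \rangle \colon \{0,1\} \times \R \to \R$ and a bijection $\pi \colon \N \to \N \times \N$. Given an input $x = (x_{n,m}) \in \R^{\N \times \N}$, set $A_n^x = \set{x_{n,m}}{m \in \N}$, $T_x = \bigcup_n A_n^x$, and $t^x_\ell = x_{\pi(\ell)}$, which enumerates $T_x$ with possible repetitions. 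Define
\[
f(x)_{n,\ell} = \langle \chi_{A_n^x}(t^x_\ell), \, t^x_\ell \rangle,
\]
so that the $n$-th row of $f(x)$ enumerates the set $B_n^x = \set{\langle \chi_{A_n^x}(t), t\rangle}{t \in T_x}$. Borel-ness is immediate, since $\chi_{A_n^x}(t) = 1$ unfolds to the $\Sigma^0_1$ condition $\exists m \, (t = x_{n,m})$.

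Preservation is easy: if $\set{A_n^x}{n \in \N} = \set{A_m^{x'}}{m \in \N}$, then $T_x = T_{x'}$ and $\set{B_n^x}{n \in \N} = \set{B_m^{x'}}{m \in \N}$ as sets, whence $f(x) \mathrel{E_{=_\R^{2\text{-}\mathrm{cf}}}} f(x')$. The heart of the argument is the converse. Suppose $f(x) \mathrel{E_{=_\R^{2\text{-}\mathrm{cf}}}} f(x')$, fix any $n$, and pick $m$ with $B_n^x \subseteq B_m^{x'}$. Injectivity of $\langle \cdot, \cdot \rangle$ converts this inclusion into the coordinatewise statement that for each $t \in T_x$ we have $t \in T_{x'}$ and $\chi_{A_n^x}(t) = \chi_{A_m^{x'}}(t)$. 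Hence $T_x \subseteq T_{x'}$, and by symmetry $T_x = T_{x'}$. Since $A_n^x \subseteq T_x = T_{x'} \supseteq A_m^{x'}$, the pointwise agreement of $\chi_{A_n^x}$ and $\chi_{A_m^{x'}}$ on $T_x$ upgrades to the identity $A_n^x = A_m^{x'}$. Running this for every $n$ and symmetrically for every $m$ yields $\set{A_n^x}{n \in \N} = \set{A_m^{x'}}{m \in \N}$, i.e.\ $x \mathrel{=_\R^{++}} x'$.

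Conceptually, attaching to each $t \in T$ the bit ``$t \in A_n$'' turns the $\subseteq$-based equivalence $E_{=_\R^{2\text{-}\mathrm{cf}}}$ into pointwise equality of characteristic functions, which encodes the sets exactly once the ambient universe $T$ is known. The only mildly delicate point, and the step requiring \emph{both} directions of the $E_{=_\R^{2\text{-}\mathrm{cf}}}$-condition, is ensuring that $T_x$ and $T_{x'}$ coincide; everything else is bookkeeping.
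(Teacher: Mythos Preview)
Your proof is correct and gives a genuinely different argument from the paper's. The paper works with an alternative presentation of $=_\R^{++}$ on $\R^\N\times(2^\N)^\N$ (pairs $(x,y)$ where $x$ enumerates an ambient set and the $y_i$ select subsets), observes that on the comeager set $D$ of codes whose listed subsets form an antichain under inclusion the identity map already reduces $=_\R^{++}$ to $E_{=_\R^{2\text{-}\mathrm{cf}}}$, and then invokes an external Baire-category result (Theorem~\ref{thm: baire category for =++}, from \cite{Sha-thesis}) to the effect that restricting $=_\R^{++}$ to any comeager set does not lower its complexity. Your construction instead builds an explicit reduction: tagging each $t\in T_x$ with the bit $\chi_{A_n^x}(t)$ forces an inclusion $B_n^x\subseteq B_m^{x'}$ to simultaneously certify $T_x\subseteq T_{x'}$ and pointwise agreement of the characteristic functions, so that once both directions of the $E_{=_\R^{2\text{-}\mathrm{cf}}}$-hypothesis pin down $T_x=T_{x'}$, each inclusion upgrades to an equality of the underlying sets. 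The payoff of your approach is that it is entirely self-contained and elementary, avoiding the appeal to \cite{Sha-thesis}; the paper's route, by contrast, isolates the conceptual reason the reduction exists (on an antichain, cofinal containment collapses to equality) and reuses machinery already developed for other purposes.
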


In order to prove Lemma~\ref{claim: =++ reduces to E=2cf}, we recall the following analysis of the second Friedman-Stanley jump from \cite[Section 3.5]{Sha-thesis}. When considering $=_{\R}^{++}$ as an equivalence relation on $(\mathbb{R}^{\N})^\N$, as we get by applying the Friedman-Stanley jump twice to $=_\R$, the corresponding product topology turns out to be inadequate. We will instead use the following alternative presentation of the second Friedman-Stanley jump.

Consider elements $x\in\R^\N$ as representing the set of reals $S_x=\set{x(i)}{i\in\N}$.
Given $x \in \R^\N$ and some $u\in 2^\N$, we think of $u$ as representing the subset of $x$ defined by $S_x(u)=\set{x(i)}{u(i)=1}$.
Similarly, given some $y = (y_i)_{i \in \N } \in(2^\N)^\N$ we may think of $y$ as a code for a set of subsets of $S_x$, namely \( S_{(x,y)}=\set{S_x(y_i)}{i\in\N} \).
%Our space will be $\R^\N\times (2^\N)^\N$.
%An element $(x,y)$ in the space will represent a set of reals $x$ and a collection of subsets of it, $y$.
%Let $A_{(x,y)}=\set{A(y_i)}{i\in\N}$.
Equip $\R^\N\times (2^\N)^\N$ with the natural product topology, and 
let $F$ be the equivalence relation on $\R^\N\times (2^\N)^\N$ defined by $(x,y)\mathrel{F}(x',y')$ if and only if $S_{(x,y)}=S_{(x',y')}$. Just as \( =_\R^{++} \), 
the relation $F$ is defined to admit a natural complete classification with countable sets of countable sets of reals as complete invariants. Indeed, the relation
$F$ 
can be seen as the pullback of $=_{\R}^{++}$ under the continuous map $\R^\N\times(2^\N)^\N\to (\R^\N)^\N$ sending $(x,y)$ to the element of $(\mathbb{R}^\N)^\N$ whose columns enumerate the sets of reals $S_x(y_0),S_x(y_1), \dots$ in the obvious way.
%We will work with the natural product topology on $\R^\N\times (2^{\N})^\N$.
Conversely, there is a natural Borel reduction from \( =_\R^{++} \) to \( F \), so that $F \sim_B {=_\R^{++}}$ (see~\cite[Claim 3.5.2]{Sha-thesis}).

The reason for considering \( F \) instead of \( =_\R^{++} \) is the following. There is a natural comeager $D\subset(\R^\N)^\N$ such that ${=_\R^{++}\restriction D} \sim_B  {=_\R^+}$ (\cite[Proposition 3.5.1]{Sha-thesis}), namely the set $D$ of all $x = (x_n)_{n \in \N} \in(\R^\N)^\N$ such that $x_n(k)\neq x_m(l)$ for any two distinct pairs $(n,k),(m,l) \in \N^2$. This means that on a topologically large set, the complexity of \( =_\R^{++} \) drops down. In contrast:
\begin{thm}[{\cite[Section 3.5]{Sha-thesis}}]\label{thm: baire category for =++}
For any comeager set $D\subset\R^\N\times(2^\N)^\N$,  the restriction $F\restriction D$ of \( F \) to \( D \) is Borel bi-reducible with $F$ (and so with $=_\R^{++}$).
\end{thm}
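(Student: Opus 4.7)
The direction $F\restriction D \leq_B F$ is immediate, since the inclusion $D \hookrightarrow \R^\N\times(2^\N)^\N$ is already a Borel reduction. The substantive content is therefore the reverse direction $F \leq_B F\restriction D$, and the plan is to construct, given a comeager $D$, a Borel map $\Phi \colon \R^\N\times(2^\N)^\N \to D$ satisfying $\Phi(x,y)\mathrel{F}(x,y)$ for every $(x,y)$. By transitivity of $F$, any such $\Phi$ automatically satisfies $(x,y)\mathrel{F}(x',y') \iff \Phi(x,y)\mathrel{F}\Phi(x',y')$ and hence is a Borel reduction of $F$ to $F\restriction D$. The entire problem thus reduces to exhibiting such a $\Phi$.

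The existence of $\Phi$ will hinge on the richness of $F$-classes. Any element $(x,y)$ admits many $F$-equivalent presentations because the set $S_{(x,y)} = \set{S_x(y_i)}{i\in\N}$ is preserved by operations such as: permuting the coordinates of $x$ with matching compensation in each $y_i$; permuting, repeating, or duplicating the rows $y_i$; inserting a new coordinate in $x$ whose value repeats some existing $x(k)$ together with matching copies of the $k$-th bit of every $y_i$; and inserting a new coordinate in $x$ with any fresh value, provided the new bit of every $y_i$ is $0$. The group generated by these operations acts on $\R^\N\times(2^\N)^\N$ with orbit equivalence equal to $F$, and thus every $F$-class is dense in a very strong sense relevant for Baire-category arguments.

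Writing $D = \bigcap_n U_n$ with each $U_n$ open dense, the plan is a stage-by-stage fusion. At stage $n$ I maintain a rational basic open box $B_n \subseteq \R^\N\times(2^\N)^\N$ so that (a) $B_n \subseteq B_{n-1}\cap U_n$, and (b) $B_n$ is compatible with a recursive scheme that uses the first $n$ coordinates of $(x,y)$ to pin down an initial segment of $\Phi(x,y)$ via the $F$-preserving operations above. At stage $n+1$, density of $U_{n+1}$ provides an extension, picked by a canonical rule (e.g.\ lexicographically least refinement into $U_{n+1}$) so as to keep the construction Borel in $(x,y)$. The output $\Phi(x,y)$ is the unique element of $\bigcap_n B_n$, which lies in $D$ by construction and in $[(x,y)]_F$ because the scheme faithfully reproduces $S_{(x,y)}$ in the limit.

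The main obstacle is reconciling the two competing demands: meeting the finitely supported open condition $U_n$ at each stage, and reproducing the asymptotic set-of-sets-of-reals invariant $S_{(x,y)}$. The former is an open constraint on finitely many coordinates of $\Phi(x,y)$, while the latter is an infinitary soft condition that can tolerate arbitrary perturbation of any single coordinate, provided the perturbation is absorbed by a compensating operation from the orbit. The compatibility rests on keeping, at every stage, infinitely many ``free'' coordinates of $\Phi(x,y)$ whose specific values do not affect the eventual $F$-class (either because their $y^*$-bits are forced to $0$, or because they are canonical repetitions of entries already committed to $x$). Any nonempty rational basic box contains choices of the free coordinates realizing these two conditions simultaneously, and density of $U_{n+1}$ guarantees one such choice inside $U_{n+1}$; the technical heart, carried out in~\cite[Section~3.5]{Sha-thesis}, is to verify that the canonical-selection rule for this choice depends measurably on $(x,y)$, so that the resulting $\Phi$ is genuinely Borel.
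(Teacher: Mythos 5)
The paper does not actually prove this theorem; it imports it from \cite[Section 3.5]{Sha-thesis}, so I can only assess your argument on its own terms — and it has a fatal gap at the very first step. You reduce the whole problem to producing a Borel map $\Phi$ into $D$ with $\Phi(x,y)\mathrel{F}(x,y)$ for \emph{every} $(x,y)$. Such a $\Phi$ would indeed be a reduction, but it cannot exist for an arbitrary comeager $D$, because $F$-classes need not meet $D$ at all: contrary to your claim, $F$-classes are not dense. Concretely, the class of any $(x,y)$ with every $y_i=\bar{0}$ (invariant $\{\emptyset\}$) is exactly $\R^{\N}\times\{(\bar{0},\bar{0},\dots)\}$, a closed nowhere dense set, and it is disjoint from the open dense set $D=\{(x,y): y_0\neq\bar{0}\}$. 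More generally, if $\bigcup S_{(x,y)}$ misses an open interval $I$, then any $(x',y')$ in the class has $x'(k)\notin I$ whenever $y'_0(k)=1$, so the class avoids the dense open set $\bigcup_k\{(x',y'): x'(k)\in I \text{ and } y'_0(k)=1\}$ and is nowhere dense. The same obstruction defeats the fusion: once $[(x,y)]_F$ is nowhere dense, no sequence of boxes $B_n$ can both refine into $U_{n+1}$ and keep meeting the class, and the ``free coordinates'' device does not help, since the basic box witnessing membership in $U_{n+1}$ may impose exactly a condition of the form ``$y^*_i(k)=1$ and $x^*(k)\in I$'' with $I$ disjoint from $\bigcup S_{(x,y)}$.

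What a correct proof must do — and what the cited argument does — is weaker and genuinely different in kind: the reduction $\Phi$ is allowed, indeed forced, to change the invariant, as long as the induced map $S_{(x,y)}\mapsto S_{\Phi(x,y)}$ is injective on invariants. Roughly, one codes $S_{(x,y)}$ injectively into the invariant of a point built to be sufficiently Cohen-generic over a countable model containing a code for $D$ (so that it lands in $\bigcap_n U_n$), rather than trying to realize the \emph{same} invariant inside $D$. Your category-theoretic toolkit (fusion through the $U_n$, canonical selection to keep the construction Borel) is the right machinery for the second half of such an argument, but the target you set for it — an $F$-preserving selector into $D$ — is unattainable, and repairing this requires the missing idea of an invariant-transforming coding, which is the actual content of the theorem.
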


%Let $D_0\subset \R^{\N}\times(2^\N)^\N$ be all $(x,y)$ such that the reals in $x$ are pairwise distinct and any real in $x$ appears in one of the subsets. That is for every $i$ there is some $m$ such that $y(m)(i)=1$. We will work with $F$ restricted to $D_0$ for convenience.

In the same fashion, we may consider the following alternative presentation of \( E_{=_\R^{2 \text{-}\mathrm{cf}}} \). Let $E$ be the equivalence relation on $\R^\N\times(2^\N)^\N$ defined by $(x,y)\mathrel{E}(x',y')$ if for any $i \in \N$ there is a $j\in \N$ with $S_x(y_i)\subset S_{x'}(y'_j)$ and for any $i \in \N$ there is a $j \in N$ with $S_{x'}(y'_i)\subset S_x(y_j)$. It is clear that  $E \sim_B E_{=_{\mathbb{R}}^{2-\mathrm{cf}}}$. We are now ready to prove Lemma~\ref{claim: =++ reduces to E=2cf}.

\begin{proof}[Proof of Lemma~\ref{claim: =++ reduces to E=2cf}]
The identity map is clearly a homomorphism from $F$ to $E$, that is, \( (x,y) \mathrel{F} (x',y') \implies (x,y) \mathrel{E} (x',y') \) for all \( (x,y), (x',y') \in  \R^\N\times(2^\N)^\N \). Suppose now that $(x,y)$ and $(x',y')$ are $E$-related but not $F$-related.
Without loss of generality, there is some $X \in S_{(x,y)}$ which is not equal to any set $Y \in S_{(x',y')}$. But since \( (x,y) \mathrel{E} (x',y') \), there are  $Y \in S_{(x',y')}$ and $X' \in S_{(x,y)}$ such that \( X \subseteq Y \subseteq X' \). Since $X\neq Y$, we must have that $X\subsetneq X'$.

Consider now the set $D\subset\R^\N\times(2^\N)^\N$ of all pairs $(x,y)$ such that  $S_x(y_i)\not\subset S_x(y_j)$ for all $i\neq j$.
The discussion above entails that the identity map is a (Borel) reduction of $F\restriction D$ to $E$.
Furthermore, $D$ is comeager in $\subset\R^\N\times(2^\N)^\N$, so by Theorem~\ref{thm: baire category for =++}, 
\[ 
{=_\R^{++}} \sim_B F \leq_B (F\restriction D) \leq_B E \sim_B E_{=_{\mathbb{R}}^{2\text{-}\mathrm{cf}}}. \qedhere
\]
\end{proof}

\begin{prop}
$E_{=_\R^{2\text{-}\mathrm{cf}}} \not\leq_B {\equiv_{\mathrm{ArGp}}}$, thus 
 $=_\R^{2\text{-}\mathrm{cf}} \not\leq_B {\embeds_\mathsf{ArGp}}$.
\end{prop}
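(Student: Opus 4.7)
The key reduction is Lemma~\ref{claim: =++ reduces to E=2cf}: composing any hypothetical Borel reduction \( E_{=_\R^{2\text{-}\mathrm{cf}}} \leq_B \equiv_\mathsf{ArGp} \) with it would produce \( =_\R^{++} \leq_B \equiv_\mathsf{ArGp} \). So it suffices to prove \( =_\R^{++} \not\leq_B \equiv_\mathsf{ArGp} \). I first pass to a cleaner classification of \( \equiv_{\mathsf{ArGp}} \). Any countable Archimedean group \( H \) is bi-embeddable with its divisible hull \( \tilde H \), and a divisible subgroup of \( \R \) is a \( \Q \)-vector space, so that \( rG \subseteq G \) forces \( rG = G \) by dimension considerations. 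Thus \( \tilde H \in \mathcal{X} \), and the Borel map \( H \mapsto \tilde H \) preserves \( \equiv \)-classes and lands in \( \mathcal{X} \). Composing any hypothetical Borel reduction \( f \colon X_{=_\R^{++}} \to X_\mathsf{ArGp} \) with this map gives a Borel reduction \( \tilde f \) from \( =_\R^{++} \) to \( \cong_\mathcal{X} = \equiv_\mathcal{X} \), which admits the absolute complete classification \( G \mapsto A_G \) by hereditarily countable sets.

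Next, I apply the symmetric-model framework of Theorem~\ref{thm : symmetric models irreducibility}. Over the basic Cohen model \( V(A^1) \), and in the spirit of Section~\ref{subsec : complicatedinvariant}, I would construct via forcing a ``second-order'' generic: add mutually generic Cohen sets of reals \( A^1_n \), and let \( A^2 = \set{A^1_n}{n \in \N} \), a Dedekind-finite set of sets of reals in \( V(A^2) \). A further forcing enumerating \( A^2 \) yields an \( x \in (\R^\N)^\N \) with \( =_\R^{++} \)-invariant \( A^2 \); set \( G \coloneqq \tilde f(x) \). Shani's theorem then gives \( V(A^2) = V(A_G) \) and that \( A_G \) is definable in \( V(A^2) \) from \( A^2 \) and parameters from the ground ZFC model alone.

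The contradiction comes from comparing the internal structures of \( A_G \) and \( A^2 \). On the one hand, \( A_G = \set{G/r}{r \in G \setminus \{0\}} \) is countable in \( V(A_G) \) and admits a canonical definable enumeration from any single parameter \( Y \in A_G \) via \( r \mapsto Y/r \) for \( r \in Y \setminus \{0\} \), exploiting that \( A_G = A_{G/r} \) for all such \( r \). This is precisely the ``single-parameter enumerability'' property characterizing \( \cong^*_{3,0} \)-style invariants (cf.\ Remark~\ref{rmk : invs for 3,0}). On the other hand, \( A^2 \) is Dedekind-finite in \( V(A^2) \), and a symmetry/Continuity-Lemma-type argument analogous to Lemma~\ref{lem;continuity-Cohen-model}, adapted to the second-order Cohen forcing producing \( A^2 \), rules out that any element of \( V(A^2) \) definable from \( A^2 \) and ground-model parameters alone enumerates \( A^2 \) cofinally in the subset order. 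Since the reduction \( \tilde f \) forces \( A_G \) and \( A^2 \) to stand in a precise interdefinable cofinal-correspondence in \( V(A^2) \) (\( Y \in A_G \) being cofinally comparable to elements of \( A^2 \) via the relation of Lemma~\ref{lem : emb}), the parameter-enumeration of \( A_G \) would push forward to a forbidden definable enumeration of a cofinal subset of \( A^2 \), contradicting Dedekind-finiteness.

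The main obstacle is the third step: rigorously transferring the single-parameter enumerability of \( A_G \) to a definable enumeration of (a cofinal subset of) \( A^2 \). This requires establishing an analogue of Lemma~\ref{lem;no-new-reals} for the second-order forcing adding \( A^2 \), together with a careful symmetry analysis ensuring that no ``hidden'' parameter beyond \( A^2 \) itself is smuggled in when reconstructing the enumeration of \( A_G \) from \( A^2 \). Once this is set up, the argument closely parallels the corresponding step in Proposition~\ref{prop : bi-embed non reduction}, but with the roles of the two invariants reversed: there one shows the invariant of an Archimedean group is too complex to be a \( \cong^*_{3,0} \)-invariant; here one shows a generic \( =_\R^{++} \)-invariant is too complex to be equivalent to an invariant of an Archimedean group.
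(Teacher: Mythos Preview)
Your reduction to showing \( {=_\R^{++}} \not\leq_B {\equiv_\mathsf{ArGp}} \) via Lemma~\ref{claim: =++ reduces to E=2cf} is correct and matches the paper. However, the paper then finishes in one line using potential complexity: \( \equiv_\mathsf{ArGp} \) is \( \boldsymbol{\Sigma}^0_4 \) (by the same computation as in Proposition~\ref{prop : reduction to cong_3,1}), and by Hjorth--Kechris--Louveau \( =_\R^{++} \) has potential class \( \boldsymbol{\Pi}^0_4 \), so it is not Borel reducible to any \( \boldsymbol{\Sigma}^0_4 \) equivalence relation. No forcing or symmetric models are needed for this direction.

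Your longer route has two concrete errors in the very first step. First, it is \emph{not} true that every countable Archimedean group is bi-embeddable with its divisible hull: \( \mathbb{Z} \) embeds in \( \mathbb{Q} \), but by Hion's Lemma any order preserving homomorphism \( \mathbb{Q}\to\mathbb{Z} \) would be multiplication by some positive real \( \lambda \) with \( \lambda\mathbb{Q}\subseteq\mathbb{Z} \), which is impossible. So \( H\mapsto\tilde H \) does not preserve \( \equiv \)-classes. Second, divisible subgroups of \( \R \) need not lie in \( \mathcal{X} \): your ``dimension'' argument fails in infinite dimension. For example, take \( G=\Q[\pi] \) (rational polynomials in \( \pi \)). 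Then \( 1,\pi\in G \), and \( G/1=G\subsetneq G/\pi \) since \( \pi\cdot p(\pi)\in G \) for every \( p \), but \( 1/\pi\notin\Q[\pi] \). Thus the passage to \( \mathcal{X} \) collapses, and with it the use of \( G\mapsto A_G \) as a complete invariant for \( \equiv_\mathsf{ArGp} \). The remaining symmetric-model sketch (the ``third step'') is therefore built on a broken reduction; and even on its own terms it is only a heuristic, whereas the potential-complexity argument settles the matter immediately.
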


\begin{proof}
Arguing as in Proposition~\ref{prop : reduction to cong_3,1} one easily sees that \( \embeds_\mathsf{ArGp} \) is a \( \boldsymbol{\Sigma}^0_4 \) quasi-order, hence \( \equiv_\mathsf{ArGp} \) is a \( \boldsymbol{\Sigma}^0_4\) equivalence relation.
By the results of Hjorth-Kechris-Louveau \cite{HjoKecLou}, the relation $=_{\R}^{++}$ is not Borel reducible to any $\boldsymbol{\Sigma}^0_4$ equivalence relation. In particular \( {=_\R^{++}} \not\leq_B {\equiv_\mathsf{ArGp}} \), thus also $E_{=_\R^{2\text{-}\mathrm{cf}}}\not\leq_B {\equiv_\mathsf{ArGp}}$ by  Lemma~\ref{claim: =++ reduces to E=2cf}.
\end{proof}

%The fact that \( \equiv_\mathsf{ArGp} \) is a \( \boldsymbol{\Sigma}^0_4 \) equivalence relation does not imply that \( \cong^*_{3,1} \) is an upper bound for it because \( \equiv_\mathsf{ArGp} \) is not known to be (Borel bi-reducible with) an isomorphism relation. Actually, we do not even know the answer to the following related questions:

\subsection{Another lower bound for \(\equiv_\mathsf{ArGp}\)}\label{subsec: another lower bound}
In an earlier version of this paper we left open the question of whether \(\equiv_\mathsf{ArGp}\) is Borel reducible to an orbit equivalence relation, that is, an equivalence relation induced by a continuous action of a Polish group (see~\cite[Section~3.4]{Gao09}). This problem was since settled by the following argument due to Shani, reducing the maximal \(K_\sigma\) equivalence relation\footnote{A subset of a Polish space is \(K_\sigma\) if it is a countable union of compact sets.} to \(\equiv_\mathsf{ArGp}\).

Consider the space \(\mathcal{P}(\N)^\N\) of all sequences of subsets of \(\N\), with the product topology, where \(\mathcal{P}(\N)\) is identified with the Cantor space \(2^\N\).
Let \(C\subset \mathcal{P}(\N)^\N\) be the set of all \(\seqq{A_n}{n\in\N}\) so that \(A_{n}\subset A_{n+1}\) for each \(n\in\N\).
\begin{defn}
Define \(\leq_{K_\sigma}\) on \(C\) by 
\begin{equation*}
 x\leq_{K_\sigma}y \quad\textrm{ if and only if }\quad\exists l\, \forall m\, (x(m)\subset y(m+l))   
\end{equation*}
Define \(E_{K_\sigma}\) to be \(E_{\leq_{K_\sigma}}\), the induced equivalence relation.
\end{defn}

\begin{thm}[Kechris; Louveau-Rosendal, see~{\cite[Theorem~17]{Ros}}]
The relation $\leq_{K_\sigma}$ is a complete $K_\sigma$ quasi-order, that is: any $K_\sigma$ quasi-order is Borel reducible to $\leq_{K_\sigma}$.
In particular, $E_{{K_\sigma}}$ is a complete $K_\sigma$ equivalence relation.
\end{thm}

\begin{thm}\label{thm : EKsigma red to embed}
${\leq_{K_\sigma}}\leq_B {\sqsubseteq_{\mathrm{ArGp}}}$, and so
${E_{{K_\sigma}}}\leq_B{\equiv_{\mathrm{ArGp}}}$.
\end{thm}
\begin{proof}
Let $\geq_{K_\sigma}$ be the reverse of $\leq_{K_\sigma}$. By the maximality of $\leq_{K_\sigma}$ it follows that ${\geq_{K_\sigma}}\leq_B{\leq_{K_\sigma}}$, and so ${\leq_{K_\sigma}}\leq_B{\geq_{K_\sigma}}$. It therefore suffices to find a Borel reduction from $\geq_{K_\sigma}$ to $\sqsubseteq_{\mathrm{ArGp}}$. We fix some parameters towards the definition.

Fix a sequence $a_0,a_1,...$ of reals with the following property:
\begin{enumerate}[label={\upshape (\( \dagger \))},leftmargin=2pc]
\item \label{dagger}
For any $\alpha\in\set{a_i}{i\in\N}\cup \set{a_i\cdot a_j}{i,j\in\N}$, the divisible additive groups generated by $\{\alpha\}$ and by $\{1\}\cup\set{a_i}{i\in\N}\cup \set{a_i\cdot a_j}{i,j\in\N}\setminus\{\alpha\}$ are disjoint. 
\end{enumerate} 
For example, if $a_0,a_1,...$ are distinct members of the set $Y$ in Fact~\ref{fact : perfect} then they satisfy~\ref{dagger}.
Fix $x\in C$. For $n\in\N$ define $k=k(n,x)\in\N$ to be the minimal $k$ for which $n\in x(k)$ (if exists) and $k(n,x)=\infty$ otherwise.
Let $G^x$ be the additive subgroup of $(\R,+)$ generated by
\begin{equation*}
\{1\}\cup\set{a_n/2^{k(n,x)}}{k(n,x)<\infty}\cup\set{a_n/2^l}{k(n,x)=\infty,\, l\in\N}.    
\end{equation*}
Recall that we may view the domain of $\sqsubseteq_{\mathrm{ArGp}}$ as $\mathcal{A}$, the space of countable sequences enumerating a subgroup of $(\R,+)$.
Fix a Borel map $f\colon C\to \mathcal{A}$ sending $x$ to an enumeration of $G^x$.
We show that $f$ is the desired reduction of $\geq_{K_\sigma}$ to $\sqsubseteq_{\mathrm{ArGp}}$.

Fix $x,y\in C$ and suppose first that $y\leq_{K_\sigma}x$, so there is an $l\in\N$ such that for all $m\in\N$, $y(m)\subset x(m+l)$. Then $\bigcup_m y(m)\subset \bigcup_m x(m)$ and for any $n\in\bigcup_m y(m)$, $k(n,x)\leq k(n,y)+l$. It follows that the map $r\mapsto 2^l\cdot r$ maps $G^x$ into $G^y$, so $f(x)\sqsubseteq_{\mathrm{ArGp}}f(y)$. 

For the other direction, the following claim will be useful.
\begin{claim}
Let $G$ be the additive subgroup of $\R$ generated by $\set{a_n/2^k}{n,k\in\N}\cup\{1\}$. Let $H$ be a subgroup of $G$ so that $1\in H$ and there is some $n,k$ for which $a_n/2^k\in H$. Assume that $\lambda\in \R^+$ is such that the map $r\mapsto \lambda\cdot r$ sends $H$ into $G$. Then $\lambda$ is an integer.
\end{claim}
\begin{proof}[Proof of the claim]
Assume for a contradiction that $\lambda$ is not an integer. Since $1\in H$ it follows that $\lambda\in G$, and so we may write $\lambda=d+\sum_{i=1}^m d_i\cdot a_{n_i}/2^{k_i}$, where $d,d_1,...,d_m$ are integers, $d_i$ is non-zero, and $n_1,...,n_m$ are distinct.

We conclude that $\lambda\cdot a_n/2^k\in G$. It follows that $d_1\cdot a_n\cdot a_{n_1}$ is in the divisible additive group generated by $\{1\}\cup\set{a_i}{i\in\N}\cup\set{a_n\cdot a_{n_i}}{i=2,...,m}$, a contradiction to~\ref{dagger}.
\end{proof}
Assume now that $y\not\leq_{K_\sigma}x$. We need to show that $G^x$ does not embed into $G^y$. Assume towards a contradiction that there is an embedding from $G^x$ into $G^y$. By Hion's lemma, and the above lemma, this embedding must be of the form $r\mapsto \lambda\cdot r$ where $\lambda$ is an integer.
By assumption, for each $l$ there is some $m$ so that $y(m)$ is not contained in $x(m+l)$. So for each $l$ there is some $n$ for which $k(n,x)>k(n,y)+l$ (and in particular $k(n,y)$ is finite). Fix such $l$ and $n$. Then $\lambda\cdot a_n/2^{k(n,x)}\in G^y$. 
By the claim we know that $\lambda$ is an integer, so by~\ref{dagger} it must be that $\lambda\cdot a_n/2^{k(n,x)}=d\cdot a_n/2^{k(n,y)}$ for some integer $d$, and so $\lambda $ is divisible by $2^l$. As this is the case for every $l\in\N$, we reached a contradiction.
\end{proof}
As $E_{K_\sigma}$ is a maximal $K_\sigma$ equivalence relation, it is quite complex. In particular, it follows from a result of Kechris and Louveau~\cite{KL97} that \(E_{K_\sigma}\) is not Borel reducible to any orbit equivalence relation. This is because the equivalence relation $E_1$ is Borel reducible to $E_{K_\sigma}$: see for example the discussion in~\cite[p.4844]{LouRos}. See also the tables in \cite[p.349]{Gao09} and \cite[p.68]{Kano08} (these tables have the equivalence relation \(l^\infty\), which is known to be Borel bi-reducible with \(E_{K_\sigma}\) by the work of Rosendal~\cite{Ros}).  

\begin{cor}\label{cor: biembed not orbit}
\(\equiv_{\mathrm{ArGp}}\) is not Borel reducible to any orbit equivalence relation. In particular \(\equiv_{\mathrm{ArGp}}\) is not classifiable by countable structures, and so it is not Borel reducible to \(\cong_\mathsf{ArGp}\).
\end{cor}
\begin{question}
Is \(\cong_\mathsf{ArGp}\) Borel reducible to \(\equiv_\mathsf{ArGp}\)?
\end{question}

As a bi-product of Theorem~\ref{thm : EKsigma red to embed} and Proposition~\ref{prop : embed reducible to 2nd jump}, we conclude a lower bound for the second Rosendal jump: that \({\leq_{K_\sigma}}\) is Borel reducible to \({=_\R^{2-\mathrm{cf}}}\). The proofs in fact give the following stronger conclusion.
\begin{cor}
The quasi-order \({\leq_{K_\sigma}}\) is Borel reducible to \({=_\N^{2-\mathrm{cf}}}\), and so \({E_{K_\sigma}}\leq_B{E_{=^{2-\mathrm{cf}}_\N}}\).
\end{cor}
\begin{proof}
%Combining Theorem~\ref{thm : EKsigma red to embed} and Proposition~\ref{prop : embed reducible to 2nd jump}, we get the weaker result: a Borel reduction from \({\leq_{K_\sigma}}\) to \({=_\R^{2-\mathrm{cf}}}\). Nevertheless, the proofs in fact give the stronger conclusion, as follows.

Given a countable additive subgroup \(M\) of \(\R\), let \(\mathcal{A}_M\) be the subspace of all \(x\in \mathcal{A}\) which enumerate a subgroup of \(M\). Let \(\sqsubseteq_{\mathrm{ArGp}}\restriction \mathcal{A}_M\) be the restriction of \(\sqsubseteq_{\mathrm{ArGp}}\) to \(\mathcal{A}_M\).
Note that the reduction given in Theorem~\ref{thm : EKsigma red to embed} is in fact a Borel reduction from \(\leq_{K_\sigma}\) to \(\sqsubseteq_{\mathrm{ArGp}}\restriction \mathcal{A}_M\), where \(M\) is the additive subgroup of \(\R\) generated by \(\{1\}\cup\set{a_n/2^l}{l, n\in\N}\).

Finally, the reduction given in Proposition~\ref{prop : embed reducible to 2nd jump}, when restricted to \(\mathcal{A}_M\), is a reduction of \(\sqsubseteq_{\mathrm{ArGp}}\restriction \mathcal{A}_M\) to  \(=_M^{2-\mathrm{cf}}\), the second Rosendal jump applied to the equality relation on \(M\). Since \(M\) is countable, \(=_M\) and \(=_\N\) are Borel bi-reducible, and therefore so are \(=_M^{2-\mathrm{cf}}\) and \(=_\N^{2-\mathrm{cf}}\). We conclude that \(\leq_{K_\sigma}\) is Borel reducible to \(=_\N^{2-\mathrm{cf}}\).
\end{proof}
This shows that the iterated Rosendal jump operation over \( =_\N \) (or over any Borel quasi-order containing an infinite antichain) leaves the realm of orbit equivalence relations as soon as possible. Indeed, for an equivalence relation \(F\), the symmetrization of the first Rosendal jump \(E_{F^{\mathrm{cf}}}\) is equal to the Friedman-Stanley jump \(F^+\), and \(F^+\) is Borel reducible to an orbit equivalence relation, whenever \(F\) is.

%Summing up what we discussed so far, we get
%\[ 
%{=_\R^\mathrm{cf}} <_B {\embeds_\mathsf{ArGp}} <_B {=_\R^{2\text{-}\mathrm{cf}}},
% \] 
%and
%\[ 
%{=_\R^+} = {E_{=_\R^\mathrm{cf}}} <_B {\equiv_\mathsf{ArGp}} <_B {E_{=_\R^{2\text{-}\mathrm{cf}}}}.
%\] 

\subsection{Potential classes of \( \embeds_\mathsf{ArGp} \) and \( \equiv_{\mathsf{ArGp}} \)}

The notion of potential class from Definition~\ref{def : potential classes} straightforwardly adapts to arbitrary binary relations on standard Borel spaces.
However, 
unlike the case for isomorphism relations where Hjorth, Kechris, and Louveau \cite{HjoKecLou} completely classified the possible potential complexities, 
no such theory of potential classes for Borel quasi-orders has been developed yet. Nevertheless, we are able to directly compute 
the (optimal) potential classes of \( \embeds_\mathsf{ArGp} \) and \( \equiv_{\mathsf{ArGp}} \). (Compare the next result with the fact that, as observed after Proposition~\ref{prop : reduction to cong_3,1}, the  potential class of \( \cong_{\mathsf{ArGp}} \) is $\mathrm{D}(\boldsymbol{\Pi}^0_3)$ as well.)

\begin{prop}
The potential class of both \( \embeds_\mathsf{ArGp} \) and \( \equiv_\mathsf{ArGp} \) is $\mathrm{D}(\boldsymbol{\Pi}^0_3)$.
\end{prop}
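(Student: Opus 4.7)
The plan is to prove the upper and lower bounds on the potential complexity separately, following the template set by the analysis of $\cong_\mathsf{ArGp}$.

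For the upper bound, the first observation is that both $\embeds_\mathsf{ArGp}$ and $\equiv_\mathsf{ArGp}$ are $\boldsymbol{\Sigma}^0_4$ in the standard topology on $X_\mathsf{ArGp}$. Indeed, arguing as in the proof of Proposition~\ref{prop : reduction to cong_3,1} via Hion's Lemma and Proposition~\ref{prop : continuous}, embeddability on $\mathcal{A}$ can be written as
\[
(a_n) \embeds_\mathcal{A} (b_n) \iff \exists k \left( \tfrac{b_k}{a_1} > 0 \wedge \forall n \, \exists \ell \left( b_\ell = \tfrac{b_k}{a_1} a_n \right) \right),
\]
which is $\boldsymbol{\Sigma}^0_4$, and $\equiv_\mathcal{A}$ is then the intersection of $\embeds_\mathcal{A}$ with its converse. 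To upgrade this to being \emph{potentially} $\mathrm{D}(\boldsymbol{\Pi}^0_3)$, I would exhibit Borel reductions to $\mathrm{D}(\boldsymbol{\Pi}^0_3)$ relations on an invariant space. Using the $\cong^*_{3,1}$-invariants $G \mapsto (A_G, R_G)$ from Section~\ref{subsec : explicitinvariants}, Lemma~\ref{lem : emb} shows that $G \embeds_\mathsf{ArGp} H$ iff every set in $A_G$ is contained in some set in $A_H$. This defines a natural inclusion-type (respectively, bi-inclusion-type) relation on $\mathcal{P}^{3,1}_*(\N)$, and mimicking the HKL analysis witnessing that $\cong^*_{3,1}$ is $\mathrm{D}(\boldsymbol{\Pi}^0_3)$, one checks that these relations are themselves $\mathrm{D}(\boldsymbol{\Pi}^0_3)$ in the appropriate coding topology. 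This yields the desired upper bound.

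For the lower bound, suppose for contradiction that $\equiv_\mathsf{ArGp}$ were potentially $\widetilde{\mathrm{D}(\boldsymbol{\Pi}^0_3)}$. Combined with being potentially $\boldsymbol{\Sigma}^0_4$, the HKL gap for potential classes of Borel equivalence relations in the range strictly below $\boldsymbol{\Pi}^0_4$ (where the only options are $\boldsymbol{\Pi}^0_3$ and $\mathrm{D}(\boldsymbol{\Pi}^0_3)$) would force $\equiv_\mathsf{ArGp}$ to be potentially $\boldsymbol{\Pi}^0_3$. By the extension of Proposition~\ref{prop : HjoKecLou}(1) to general Borel equivalence relations, following the arguments of~\cite{HjoKecLou}, $\equiv_\mathsf{ArGp}$ would then Borel reduce to $=^+_\R = E_{=^\mathrm{cf}_\R}$, contradicting Proposition~\ref{prop : bi-embed non reduction}. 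For $\embeds_\mathsf{ArGp}$, a similar contradiction arises by passing to its canonical equivalence relation $\equiv_\mathsf{ArGp} = E_{\embeds_\mathsf{ArGp}}$, since the potential complexity of a quasi-order dominates that of the induced equivalence relation.

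The main obstacle is the upper bound: one has to verify that the inclusion-type relation on $\mathcal{P}^{3,1}_*(\N)$-invariants truly achieves potential complexity $\mathrm{D}(\boldsymbol{\Pi}^0_3)$ rather than merely $\boldsymbol{\Sigma}^0_4$. This calls for a careful topological analysis of the invariant space together with a clever use of the parameter $R_G$ (equivalently, the uniqueness of the scalar furnished by Hion's Lemma) to express an a priori $\boldsymbol{\Sigma}^0_4$ condition as a difference of two $\boldsymbol{\Pi}^0_3$ sets, in parallel with the HKL treatment of $\cong^*_{3,1}$.
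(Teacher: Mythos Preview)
Your lower bound argument has a genuine gap. Proposition~\ref{prop : HjoKecLou} and the Hjorth--Kechris--Louveau classification of potential complexities are stated and proved for \emph{isomorphism relations}, and the paper explicitly remarks that no analogous theory has been developed for general Borel equivalence relations or quasi-orders. Since $\equiv_\mathsf{ArGp}$ is not known to be (Borel bi-reducible with) an isomorphism relation---this is in fact left open in the paper---you cannot invoke the HKL gap argument for it, nor the implication ``potentially $\boldsymbol{\Pi}^0_3 \Rightarrow$ reducible to $=^+_\R$''. The paper circumvents this by passing to the Borel $\cong$-invariant subspace $\mathcal{X}\subseteq\mathcal{A}$ on which bi-embeddability and isomorphism coincide (Proposition~\ref{prop : inv embed}); then $\equiv_\mathcal{X}$ \emph{is} an isomorphism relation, so HKL applies directly, and Proposition~\ref{prop : bi-embed non reduction} gives that $\equiv_\mathcal{X}$ is not potentially $\boldsymbol{\Pi}^0_3$, hence neither is $\equiv_\mathsf{ArGp}$.

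Your upper bound is also not completed: you identify the right obstacle but do not resolve it. The paper's argument is concrete and avoids the $\mathcal{P}^{3,1}_*(\N)$ machinery entirely. One sends $z\in\mathcal{A}$ enumerating $G$ to a pair $(x,y)\in\R^\N\times 2^{\N\times\N}$ where $x$ enumerates the \emph{field} generated by $G$ and $y(n,m)=1$ iff $x(n)\in G/z(m)$. The key point is that since $x$ and $x'$ enumerate fields containing $G$ and $H$ respectively, any $G/r$ and $H/s$ are already subsets of the sets enumerated by $x$ and $x'$; so, granted $x\mathrel{=_\R^{\mathrm{cf}}}x'$ (a $\boldsymbol{\Pi}^0_3$ condition), the inclusion $G/r\subseteq H/s$ from Lemma~\ref{lem : emb} becomes a $\boldsymbol{\Pi}^0_2$ statement in the parameters $m,l$ coding $r,s$, yielding an overall $\boldsymbol{\Sigma}^0_3$ clause. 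The conjunction is then $\mathrm{D}(\boldsymbol{\Pi}^0_3)$.
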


\begin{proof}
Consider the following representation of $\embeds_\mathsf{ArGp}$, analogous to the one given for $=_\R^{++}$ in Section~\ref{subsec : irreducibility for embeddability}.
Define a Borel map $f$ sending $z\in \mathcal{A}$ to a pair $(x,y)\in \R^{\N}\times 2^{\N\times\N}$ such that $x$ enumerates the field generated by $G$
%all the subgroups $G/g$ for $g\in G\setminus\{0\}$ 
and $y(n,m)=1$ if and only if \( z(m) \neq 0 \) and $x(n)=z(k)/z(m)$ for some $k$.
Define a relation $R$ on $\mathbb{R}^{\N}\times2^{\N\times\N}$ by $(x,y)\mathrel{R}(x',y')$ if 
\begin{itemize}
    \item $x\mathrel{=_\R^{\mathrm{cf}}}x'$ and
    %$\set{x_1(n,m)}{n,m\in\N}=\set{x_2(n,m)}{n,m\in \N}$ and
    \item $\exists m,l \,  \forall n,k (x(n)=x'(k)\wedge y(n,m)=1\implies y'(k,l)=1)$.
\end{itemize}
The first is a $\boldsymbol{\Pi}^0_3$ statement, and the second is $\bold{\Sigma}^0_3$, so $R$ is $\mathrm{D}(\boldsymbol{\Pi}^0_3)$.
Finally, it follows from Lemma~\ref{lem : emb} that $f$ is a reduction from $\embeds_\mathsf{ArGp}$ to $R$, which gives that \( \embeds_\mathcal{A} \), and hence also \( \embeds_\mathsf{ArGp} \), is potentially \( \mathrm{D}(\boldsymbol{\Pi}^0_3) \). By symmetrization, it follows that \( \equiv_\mathsf{ArGp} \), is potentially \( \mathrm{D}(\boldsymbol{\Pi}^0_3) \) as well.

To check optimality, we again consider the space \(\mathcal{X} \subseteq \mathcal{A} \)  from Section~\ref{subsec : irreducibility for embeddability} consisting of all (codes for) countable subgroups $G$ of $\mathbb{R}$ 
such that \( G/r\nsubseteq G/q \) whenever \( G/r \neq G/q \). It is easy to check that \( \mathcal{X} \) is a Borel subset of \( \mathcal{A} \) which is also \( \cong \)-invariant, i.e.\ closed under isomorphism. Since bi-embeddability and isomorphism coincide on \( \mathcal{X} \) by Proposition~\ref{prop : inv embed}, it follows that \( \equiv_\mathcal{X} \) is (Borel bi-reducible with) an isomorphism relation, and thus we can apply Hjorth-Kechris-Louveau's theory of potential classes to it. In particular, by Proposition~\ref{prop : bi-embed non reduction} and Proposition~\ref{prop : HjoKecLou} it follows that \( \equiv_\mathcal{X} \) is not potentially simpler than \( \mathrm{D}(\boldsymbol{\Pi}^0_3) \). Thus the same applies to the whole \( {\equiv_\mathcal{A}} \sim_B {\equiv_\mathsf{ArGp}} \), which in turn implies that our computation of the potential class of \( \embeds_\mathsf{ArGp} \) is optimal as well.
\end{proof}

\section{Circular orders}
\label{sec : circular orders}

A \emph{circular order} on a set \(X\) is defined by a \emph{cyclic orientation cocycle}, i.e., a function \(c \colon X^{3} \to \{\pm1, 0\}\) satisfying:
\begin{enumerate}
\item
\(c^{-1}(0) = \Delta(X)\), where \(\Delta(X) \coloneqq \{(x_{1},x_{2},x_{3}) \in X^{3} : x_{i} = x_{j}, \text{ for some \(i\neq j\)}\}\),
\item \(c(x_2,x_3,x_4)-c(x_1,x_3,x_4)+c(x_1,x_2,x_4)-c(x_1,x_2,x_3) = 0\) for
all \(x_1,x_2,x_3,x_4 \in X\).
\end{enumerate}

\begin{defn}
A group \(G\) is \emph{circularly orderable} if it admits a circular order \(c\) which is left-invariant in the sense that \(c(g_1, g_2, g_3) = c(hg_1, hg_2, hg_3)\) for all \(g_1, g_2, g_3, h \in G\).
\end{defn}

Given circularly ordered groups \((G,c)\) and \((H,d)\), an \emph{order preserving homomorphism} is a group homomorphism \(\phi \colon G \to H\) such that \[c(g_1, g_2, g_3) = d(\phi(g_1), \phi(g_2), \phi(g_3))\] for all \(g_1, g_2, g_3 \in G\). It follows from the definition of circular orders that every order preserving homomorphism is automatically an embedding.

\begin{defn}
A circularly ordered group \((G, c)\) is said to be \emph{Archimedean} if there are no elements \(g, h \in G\) such that \(c(1_G,g^n,h) = 1\) for all \(n \geq 1\).
\end{defn}

A well-known example of Archimedean circularly ordered group is \( S^1\) with the obvious circular order induced by the orientation, which can be defined as follows:

\[
c(x,y,z) =\begin{cases}
1& \text{if \((x, y, z)\) is anti-clockwise oriented};\\
-1& \text{if \((x, y, z)\) is clockwise oriented};\\
0 & \text{if any two of  \(x\), \(y\), or \(z\) agree}.
\end{cases}
\]

In fact, Archimidean circularly ordered groups are characterized as follows: 

\begin{prop}[E.g., see~{\cite[Corollary~2.11]{ClaManRiv}}]
Every Archimedean circularly ordered group is isomorphic to a subgroup of \(S^{1}\) via an order preserving isomorphism.
\end{prop}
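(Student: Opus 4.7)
The plan is to mimic H\"older's theorem in the circular setting by passing to a suitable linear cover and then projecting back. Specifically, given an Archimedean circularly ordered group $(G,c)$, I would first construct a central extension
\[
1 \to \mathbb{Z} \to \tilde G \to G \to 1
\]
carrying a natural \emph{linear} Archimedean order, then apply H\"older's theorem (Theorem~\ref{theorem : Holder}) to $\tilde G$ to obtain an order preserving embedding into $(\mathbb{R},<)$, and finally quotient by the image of the central $\mathbb{Z}$ to land inside $\mathbb{R}/\mathbb{Z} \cong S^1$ in a circular-order preserving way.

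More concretely, I would take $\tilde G$ to have underlying set $G \times \mathbb{Z}$, with multiplication twisted by a $\{0,1\}$-valued $2$-cocycle $f \colon G \times G \to \{0,1\}$ derived from $c$ in the usual way (it records whether the product $gh$ ``wraps around'' the basepoint $1_G$, cf.\ the cyclic cocycle identity in part~(2) of the definition of $c$); the linear order on $\tilde G$ is then defined by $(g,n) < (h,m)$ if $n < m$, or $n = m$ and $c(1_G,g,h) = 1$. The verification that this is a left-invariant linear order uses exactly the defining identities for the cocycle $c$, and the Archimedean hypothesis on $c$ translates into the statement that $\tilde G$ has no element dominating all powers of any other element, i.e.\ $\tilde G$ is Archimedean as a linearly ordered group.

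By H\"older's theorem, we get an order preserving homomorphism $\tilde\phi \colon \tilde G \hookrightarrow (\mathbb{R},+,<)$. The central element $(1_G,1) \in \tilde G$ is mapped to some positive real $\alpha$, and since it is central and positive, its image $\alpha \mathbb{Z}$ is a subgroup of $\tilde\phi(\tilde G)$ lying in the center. Composing $\tilde\phi$ with the quotient map $\mathbb{R} \to \mathbb{R}/\alpha\mathbb{Z}$ and identifying $\mathbb{R}/\alpha\mathbb{Z}$ with $S^1$ (equipped with its standard circular order inherited from orientation) factors through $G = \tilde G/\mathbb{Z}$, producing a map $\phi \colon G \to S^1$. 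The construction of the order on $\tilde G$ ensures that $\phi$ preserves $c$, and injectivity follows because an order preserving homomorphism between circularly ordered groups is automatically an embedding.

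The main technical obstacle is checking that the cocycle $f$ built from $c$ is genuinely a group $2$-cocycle (so that $\tilde G$ is actually associative), and that the order so defined on $\tilde G$ is both left-invariant and Archimedean under the circular Archimedean hypothesis. The cocycle identity is exactly the content of condition (2) in the definition of a cyclic orientation cocycle, translated into additive language; Archimedicity of $\tilde G$ follows by a case analysis showing that if $(h,m)$ were dominated by all powers of $(g,n)$ in $\tilde G$, then either $g$ would witness a failure of the Archimedean property of $c$, or the central element $(1_G,1)$ would fail to be cofinal in $\tilde G$, neither of which is possible.
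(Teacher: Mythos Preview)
The paper does not supply its own proof of this proposition; it simply cites it as a known result (Corollary~2.11 in \cite{ClaManRiv}). Your outline is correct and is precisely the standard argument: pass to \v{Z}eleva's central extension $\widetilde{G}$ (which the paper defines immediately after stating this proposition), verify that the induced linear order is Archimedean, apply H\"older's theorem, and project to $S^1$. Indeed, the paper itself remarks, citing \cite[Remark~2.6]{ClaManRiv}, that \v{Z}eleva's construction preserves the Archimedean property, which is exactly the key step you identify. So your approach matches the intended one.
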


Let us recall the following construction, which plays an important role also in the work of Clay, Mann, and Rivas~\cite{ClaManRiv} and Bell, Clay, and Ghaswala~\cite{BelClaGha}.

\begin{defn}[{\v{Z}eleva~\cite{Zhe}}]
Given a circularly ordered group \((G, c)\), let \((\widetilde{G},<)\)
 be the central extension of \(G\) by \(\mathbb{Z}\), defined as the set \(G \times \mathbb{Z} \), endowed with the multiplication \((a, n)(b, m) = (ab, n + m + \epsilon_{a,b})\), where
 \[
  \epsilon_{a,b} = 
 \begin{cases} 
0 &\text{if \(a=1_G\) or \(b=1_G\) or \(c(1_G,a,ab)=1\),}\\
1 & \text{if \(ab=1_G\text{ with }a\neq 1_G\) or \(c(1_G,ab,a)=1\)}.
 \end{cases}
 \]
 Given \((a,m), (b,n) \in G\times \Z\) we stipulate
 \[(a,m)<(b,n)\quad\iff\quad m<n\text{ or }(m = n\text{ and }c(1_G, a, b)=1).\]
\end{defn}

 Throughout this section, we are interested in \v{Z}eleva's construction only when \(G\) is a subgroup of \(S^1\). Note that if \(g \in S^1\) with \(g = e^{2\pi i\alpha}\), then the inverse of \((g,0)\) is \((g^{-1}, -1)\) and \((g,0)^n = (g^n, \lfloor n\alpha \rfloor)\), where \(\lfloor \cdot \rfloor\) denotes the floor function.

 As Clay, Mann, and Rivas~\cite[Remark~2.6]{ClaManRiv} pointed out, \v{Z}eleva's construction respects several order theoretic features, such as the Archimedean type property. We highlight the following:
 
 \begin{fact} \label{fact : zeleva}
  If \((G_i, c_i)\), \(i = 1,2\), are circularly ordered groups, and \(\phi \colon G_1 \to G_2\) is an order preserving homomorphism, then the map \(\tilde \phi\colon \tilde G_1 \to \tilde G_2, (g,n)\mapsto(\phi(g),n)\) is an order preserving homomorphism.
  \begin{comment}
 Let \((G,c)\) be a circularly ordered group. Then:
 \begin{enumerate}
     \item 
     \((\tilde G, <)\) is an Archimedean left-ordered group if and only if \((G,c)\) is an Archimedean circularly ordered group.
     \item
 \end{enumerate}
 \end{comment}
 \end{fact}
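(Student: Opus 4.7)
The task splits into two verifications: (a) that \(\tilde\phi\) is a group homomorphism, and (b) that \(\tilde\phi\) preserves the order of the central extensions.

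For (a), unwinding the two sides of the homomorphism equation we have
\[
\tilde\phi\bigl((a,m)(b,n)\bigr) = \bigl(\phi(ab),\, m+n+\epsilon_{a,b}\bigr)
\]
whereas
\[
\tilde\phi(a,m)\,\tilde\phi(b,n) = \bigl(\phi(a)\phi(b),\, m+n+\epsilon_{\phi(a),\phi(b)}\bigr).
\]
Since \(\phi\) is a group homomorphism we have \(\phi(ab) = \phi(a)\phi(b)\), so the whole problem reduces to showing that \(\epsilon_{a,b} = \epsilon_{\phi(a),\phi(b)}\) for all \(a,b \in G_1\). I would prove this by a straightforward case analysis following the definition of \(\epsilon\), using three properties of \(\phi\): (i) \(\phi(1_{G_1}) = 1_{G_2}\); (ii) \(\phi\) is injective, which holds because every order preserving homomorphism between circularly ordered groups is automatically an embedding (as noted at the beginning of Section~\ref{sec : circular orders}); and (iii) \(\phi\) preserves the cyclic orientation cocycle, i.e.\ \(c_2(\phi(x),\phi(y),\phi(z)) = c_1(x,y,z)\). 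Concretely: the clauses \(a = 1_{G_1}\) and \(b = 1_{G_1}\) transfer via (i); the clauses \(ab = 1_{G_1}\) with \(a \neq 1_{G_1}\) transfer via (i) plus (ii); and the two clauses involving \(c_1(1_{G_1},a,ab) = 1\) and \(c_1(1_{G_1},ab,a) = 1\) transfer via (iii), after rewriting \(\phi(ab) = \phi(a)\phi(b)\). Moreover the five clauses are mutually exclusive and exhaustive on \(G_i \times G_i\), so the value of \(\epsilon\) is determined by which one applies; hence the two values of \(\epsilon\) coincide.

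For (b), the order on \(\widetilde{G_i}\) is lexicographic with the \(\Z\)-coordinate dominating, so
\[
(a,m) < (b,n) \iff m < n \ \text{or}\ \bigl(m = n\ \text{and}\ c_1(1_{G_1},a,b) = 1\bigr).
\]
Applying \(\tilde\phi\) leaves the integer coordinate untouched and sends \(a,b\) to \(\phi(a),\phi(b)\); since \(c_2(1_{G_2},\phi(a),\phi(b)) = c_1(1_{G_1},a,b)\) by (iii), the right-hand side is preserved verbatim, and order preservation follows.

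The only subtle point is the case analysis in step (a), where one must use injectivity to transfer the clause ``\(ab = 1_G\) with \(a \neq 1_G\)'' correctly, and must use that the five clauses partition \(G \times G\) so that matching one clause on the \(\phi\)-side forces matching the same clause on the original side. Beyond this bookkeeping, the verification is routine.
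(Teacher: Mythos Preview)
Your proof is correct. Note that the paper does not actually prove this statement: it is recorded as a Fact attributed to Clay--Mann--Rivas~\cite[Remark~2.6]{ClaManRiv}, with no argument given. Your case analysis is the natural way to fill in the details, and your identification of injectivity as the one nontrivial ingredient (needed to transfer the clause ``$ab=1_G$ with $a\neq 1_G$'' and, implicitly, to ensure each clause holds for $(a,b)$ \emph{iff} it holds for $(\phi(a),\phi(b))$) is exactly right. One small quibble: the five clauses are not literally mutually exclusive (the clauses $a=1_G$ and $b=1_G$ overlap when both hold), but what matters---and what you really use---is that the two \emph{groups} of clauses defining $\epsilon=0$ and $\epsilon=1$ partition $G\times G$, which is true and makes the argument go through.
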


%\begin{proof}[Sketch]
%For all \(g_1, g_2 \in G\), we have \(\epsilon_{\phi(g_1), \phi(g_2)} = \epsilon_{g_1,g _2}\).
%\end{proof}

As we could not find a reference for the following result, we have included a short proof of it using \v{Z}eleva's construction.

 \begin{comment}
 \begin{lemma}
 \label{lem : irrational angle}

 If \(\phi\colon \langle g\rangle \to \langle h\rangle\) is an order preserving isomorphism such that \(\phi(g) = h\), then \(\alpha = \beta\).
 \end{lemma}
 \begin{proof}
 Suppose not, then \(\tilde \phi\colon \langle g\rangle \times \mathbb{Z} \to \langle h\rangle \times \mathbb{Z}, (g,n) \mapsto (\phi(g),n)\) is an order preserving homomorphism.
 If \(\alpha<\beta\), then let \(m = \min\{n\in \mathbb{N} : \lfloor n\alpha\rfloor < \lfloor n\beta\rfloor\}\) and let \(k = \lfloor n\alpha\rfloor\).

We obtain a contradiction as follows:
\begin{multline*}
    (h^n,k+1) = (h,0)^n = (\tilde \phi((g,0))^n = \tilde \phi((g,0)^n) = \\
\tilde \phi((g^n,k)) = (\phi(g^n), k) = (h^n, k).
\qedhere
\end{multline*}
\end{proof}

\end{comment}

\begin{prop}
\label{prop : CO trivial embedding}
Let \((G,c)\) be a subgroup of \(S^1\) with the induced circular order. There are no order preserving embedding of \(G\) into \(S^1\) besides the trivial one.
\end{prop}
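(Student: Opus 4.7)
The plan is to use \v{Z}eleva's construction and Fact~\ref{fact : zeleva} to lift any order preserving embedding \(\phi \colon G \to S^1\) to an order preserving embedding \(\tilde \phi \colon \tilde G \to \widetilde{S^1}\), thereby reducing the statement to a linearly (Archimedean) ordered situation where Hion's Lemma~\ref{lem : Hion} applies.

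First I would identify \(\widetilde{S^1}\) with the additive group \((\R,+,<)\). Representing each element of \(S^1\) by its unique lift in \([0,1)\), define \(\Phi \colon \widetilde{S^1} \to \R\) by \((a, n) \mapsto \alpha + n\), where \(\alpha \in [0,1)\) represents \(a\). A direct case analysis on the cocycle \(\epsilon_{a,b}\), splitting according to whether \(\alpha + \beta < 1\), \(\alpha+\beta = 1\), or \(\alpha+\beta > 1\), and using that \(c(1_{S^1}, a, ab) = 1\) in the first case and \(c(1_{S^1}, ab, a) = 1\) in the last, shows that \(\Phi\) is a group isomorphism. Checking the definition of the total order on \(\widetilde{S^1}\) shows that \(\Phi\) is also order preserving. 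Under \(\Phi\), the subgroup \(\tilde G\) becomes a countable subgroup of \(\R\) containing \(1 \in \Z\), since the element \((1_G, 1) \in \tilde G\) maps to \(1\).

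Next, given an arbitrary order preserving embedding \(\phi \colon G \to S^1\), Fact~\ref{fact : zeleva} yields the lift \(\tilde \phi \colon \tilde G \to \widetilde{S^1}\). Via \(\Phi\) this becomes an order preserving homomorphism between two countable subgroups of \(\R\), so by Hion's Lemma~\ref{lem : Hion} there is \(\lambda \in \R^+\) with \(\tilde \phi(x) = \lambda x\) for all \(x \in \tilde G\). To pin down \(\lambda\), observe that \(\tilde \phi(1_G, 1) = (\phi(1_G), 1) = (1_{S^1}, 1)\), and both \((1_G, 1) \in \tilde G\) and \((1_{S^1}, 1) \in \widetilde{S^1}\) are sent by \(\Phi\) to \(1\). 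Hence \(\lambda = 1\), and \(\tilde \phi\) is the identity on \(\tilde G\).

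Finally, I would unravel this at level \(n = 0\): for any \(g \in G\) with representative \(\alpha \in [0,1)\), the element \(\tilde \phi(g, 0) = (\phi(g), 0)\) has \(\Phi\)-value equal to the representative of \(\phi(g)\), which must coincide with \(\alpha = \Phi(g,0)\). Therefore \(\phi(g) = g\), and \(\phi\) is the trivial embedding. I expect no serious obstacle; the only mildly delicate step is the routine cocycle case analysis needed to verify that \(\Phi\) is a group isomorphism across the wrap-around cases.
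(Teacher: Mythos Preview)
Your proof is correct and arguably cleaner than the paper's, but it follows a different route. The paper argues element-by-element: fixing \(g \in G\) and \(h = \iota(g)\), it uses the explicit formula \((g,0)^n = (g^n, \lfloor n\alpha\rfloor)\) in \(\widetilde{S^1}\) together with Fact~\ref{fact : zeleva} (applied only to the cyclic subgroups \(\langle g\rangle\) and \(\langle h\rangle\)) to derive a contradiction from \(\alpha < \beta\) by finding \(n\) with \(\lfloor n\alpha\rfloor < \lfloor n\beta\rfloor\). Your approach is more structural: you identify \(\widetilde{S^1}\) globally with \((\R,+,<)\), lift \(\phi\) to an order preserving endomorphism of a subgroup of \(\R\) fixing \(1\), and invoke Hion's Lemma~\ref{lem : Hion} to conclude \(\lambda = 1\). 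This explains conceptually why the result holds (it \emph{is} Hion's Lemma, transported through \v{Z}eleva's construction), at the cost of verifying the cocycle isomorphism \(\Phi\) once and for all; the paper's computation is shorter but more ad~hoc. One small slip: you call the subgroups ``countable'', but the proposition has no countability hypothesis---this is harmless since Hion's Lemma does not require it.
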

\begin{proof}
Assume that \(\iota\colon G \to S^1\) is an order preserving embedding.
Consider any nonzero \(g\in G\) and set \(h= \iota(g)\). We claim that \( h = g \).

The embedding \(\iota\) induces an order preserving isomorphism \(\phi\colon \langle g\rangle \to \langle h\rangle\).
%And let \(\rho\colon [0,1) \to S^1\) be the usual bijection \( t \mapsto e^{2\pi it}\).
If \(g\) has order \(2\), then \(g = h = e^{i\pi}\). Otherwise, assume towards contradiction that \(g \neq h\).  We write \(g = e^{2\pi i \alpha}\) and \(h = e^{2\pi i \beta}\) for some \(\alpha, \beta \in [0,1)\),
and without loss of generality we assume \(c(1,g,h) = 1\), so that \(\alpha< \beta\).
Let \(n = \min\{m\in \mathbb{N} : \lfloor m\alpha\rfloor < \lfloor m\beta\rfloor\}\) and set \(k = \lfloor n\alpha\rfloor\), so that \( \lfloor n \beta \rfloor = k+1 \).
Using the fact that the map \(\tilde \phi\colon \langle g\rangle \times \mathbb{Z} \to \langle h\rangle \times \mathbb{Z}, (f,\ell) \mapsto (\phi(f),\ell)\) is an order preserving homomorphism by Fact~\ref{fact : zeleva}, we obtain the following identities:
\begin{multline*}
    (h^n,k+1) = (h,0)^n = (\tilde \phi((g,0))^n = \tilde \phi((g,0)^n)  \\
= \tilde \phi((g^n,k)) = (\phi(g^n), k) = ((\phi(g))^n,k) =  (h^n, k).
\end{multline*}
This is clearly a contradiction. We conclude that every \(g\in G\) is fixed by \(\iota\) as desired.
\end{proof}

One can as usual define the space of countable Archimedean circularly ordered groups \(X_\mathsf{CArGp}\) as a space of countable structures with domain \(\N\). We denote by \(\cong_\mathsf{CArGp}\) and \(\equiv_\mathsf{CArGp}\) the isomorphism and bi-embeddability relation on \(X_\mathsf{CArGp}\), respectively.

As for (linearly ordered) Archimedean groups, a more convenient approach for our purpose is considering the subspace of the Polish space \((S^1)^\N\) of countable sequences in \(S^1\). More precisely,
let
\[
\mathcal{C}\coloneqq \set{(x_n)_{n\in \N} \in (S^1)^{(\N)}}{ (x_n)_{n\in \N}\text{ injectively enumerates a subgroup of \(S^1\)}}.
\]
Denote by \(\cong_\mathcal{C}\) and \(\equiv_\mathcal{C}\) the isomorphism and bi-embeddability relation on \(\mathcal{C}\), respectively. Looking at the standard characterization of Archimedean circularly ordered group as subgroups of \(S^1\) (e.g., see\cite[Corollary~2.11]{ClaManRiv}) one can show that \(\cong_\mathsf{CArGp}\) (respectively, \(\equiv_\mathsf{CArGp}\)) and \(\cong_\mathcal{C}\) (respectively, \(\equiv_\mathcal{C}\)) are Borel bi-reducible.

\begin{proof}[Proof of Theorem~\ref{thm : CO}]
We work with the coding space \( \mathcal{C} \).
By Proposition~\ref{prop : CO trivial embedding}, two subgroups of \(S^1\) are bi-embeddable if and only if they are equal. Consequently, isomorphism and bi-embeddability coincide, and  \(\cong_\mathcal{C}\) is Borel reducible to \(=_\R^{+}\): indeed, the map sending \((x_n)_{n \in \N}\) to \((f(x_n))_{n \in \N}\), where \(f\) is any Borel isomorphism between \(S^1\) and \(\mathbb{R}\), is the desired Borel reduction.

For the converse, let \(T\subseteq(0,1)\) be a perfect set of real numbers that are algebraic independent over \(\Q(\pi)\) and fix a Borel bijection \(f\colon \R\to T\). (The existence of such \(T\) follows from \cite{Myc64}.) Then, a Borel reduction from \( =^+_\R \) to \( \cong_\mathcal{C} \) is given by the map \((x_n)_{n\in \N}\in \R^\N\mapsto \langle e^{2\pi i f(x_n)} : n\in \N \rangle\).
%\todo{This argument is very sparse on details. Do you think I should say more?}
\end{proof}

\begin{comment}
\begin{cor}
The space of circular orders \(\CO(\Z)\) contains uncountably many circular orders up to isomorphism.
\end{cor}
%\todo{Perhaps the next fact is well-known and it is not worth to mention. Check on this.}
\begin{proof}
For any \(\alpha \in \R\smallsetminus \Q\), the infinite cyclic group \(C_\alpha = \langle e^{i\alpha}\rangle\) is isomorphic to \(\Z\), so we can define a circular order \(c_\alpha\) on \(\Z\) by copying the circular order on \(C_\alpha\) inherited from \(S^1\). For different \(\alpha, \beta \in \R\smallsetminus \Q\), the ordered groups \((\Z, c_\alpha)\) and \((\Z, c_\beta)\) are not isomorphic.
\end{proof}
\end{comment}

\section{ODAG and other o-minimal theories} \label{sec : ODAG}

A (model-theoretic) type \(p\) is \emph{nonsimple} if for some set \(A\) of realizations of \(p\), there is \(b\notin A\) which realizes \(p\) and which is \(A\)-definable.
Mayer~\cite{May} introduced nonsimple
types in the proof of Vaught's Conjecture for o-minimal theories.
Further, Rast and Sahota~\cite{RasSah} showed that for every o-minimal theory admitting nonsimple types the isomorphism relation \(\cong_T\) on the space of countable models of \(T\) is \(S_\infty\)-complete. That is, every equivalence relation induced by a Borel action of \(S_\infty\) on a standard Borel space (equivalently: every isomorphism relation) is Borel reducible to~\(\cong_T\), and thus the latter is as complicated as possible. This is proved by showing that under such hypothesis on \( T \), the relation \( \cong_{\mathsf{LO}} \) of isomorphism on countable linear orders, which is a well known example of an \( S_\infty \)-complete equivalence relation, is Borel reducible to \( \cong_T \).

An important example of an o-minimal theory admitting nonsimple types is the one of \emph{ordered divisible Abelian groups} (\(\mathsf{ODAG}\)) ---  here the unique type at \(+\infty\) is nonsimple. 
 In this section we discuss the proof of Theorem~\ref{theorem : ODAG}, which mirrors the results of Rast and Sahota~\cite{RasSah} to the context of the bi-embeddability relation for ordered divisible Abelian groups.

%Consider the natural generalization of Borel reducibility that encompass binary relations in general.
%Given binary relations \(P, Q\) on standard Borel spaces \(X, Y\), respectively, a map \(f \colon X \to Y\) is said to be a \emph{reduction} from \(P\) to \(Q\) if for all \(x, y \in X\),
%\[
%x\mathbin{P}y \iff f(x)\mathbin{Q}f(y) .
%\]
%When such an \(f\) exists we say that \(P\) is \emph{Borel reducible} to \(Q\).
%In this section, we are interested in the case when \(P, Q\) are  both \(\analytic\) quasi-orders.%
%\footnote{A subset of a standard Borel space \( X \) is \( \analytic \) (or \emph{analytic}) if it is a projection of a Borel subset of \( X \times Y \), with \( Y \) standard Borel. A quasi-order \( P \) on \( X \) is \( \analytic \) if it is an analytic subset of \( X^2 \).}

A quasi-order \(Q\) on a standard Borel space \(X\) is said to be \emph{analytic} (or \( \analytic \)) if it is an analytic subset of \( X^2 \), i.e.\ a projection of some Borel subset of \( X^2 \times Y \) with \( Y \) standard Borel. All embeddability relations mentioned below are analytic quasi-orders, after the appropriate coding. A quasi-order \( Q \) is 
\emph{complete \(\analytic\)} if whenever \(P\) is an analytic quasi-order, then \(P \leq_B Q\). Similarly, an analytic equivalence relation \(E\) on a Polish space \(X\) is said to be \emph{complete \(\analytic\)} if whenever \(F\) is an analytic equivalence relation on a Polish space \(Y\), then \(F \leq_B E\). In particular, in both cases completeness implies that the relation is not Borel. If \(Q\) is a complete \(\analytic\) quasi-order on a Polish space \(X\), then \(E_{Q}\) 
is complete \(\analytic\). Conversely, by Louveau and Rosendal~\cite[Proposition~1.5]{LouRos} every complete \(\analytic\) equivalence relation can be obtained by symmetrization from a complete \(\analytic\) quasi-order.

A natural attempt to prove Theorem~\ref{theorem : ODAG} would be to adapt Rast-Sahota's proof to show that the embeddability relation \(\embeds_\mathsf{LO}\) on countable linear orders  is
Borel reducible to the relation \( \embeds_{\mathsf{ODAG}} \) of embeddability on countable ordered divisible Abelian groups. Unfortunately, it turns out that \( \embeds_{\mathsf{LO}} \) is not a complete \( \analytic \) quasi-order. Indeed, Laver~\cite{Lav71} proved that \(\embeds_\mathsf{LO}\) is a better quasi-order, and thus combinatorially simpler than the arbitrary \( \analytic \) quasi-order. Moreover, the same obstruction implies that the associated bi-embeddability relation \(\equiv_\mathsf{LO}\) cannot be a complete \(\analytic\) equivalence relation because even a very simple equivalence relation such as \( =_\R \) is not Borel reducible to \(\equiv_\mathsf{LO}\) (see e.g.\ \cite[Lemma 3.17]{CamMarMot}).

Luckily, the embeddability relation for \emph{colored} countable linear orders is an example of complete \(\analytic\) quasi-order. This is defined as follows.
A \emph{colored linear order} on \(\N\) is a pair \(L=(<_{L}, c_{L})\) such that \(<_{L}\) is a strict linear order on \(\N\) and \(c_{L}\colon \N \to {\N}\).
Denote by \(X_\mathsf{CLO}\) the Polish space of all colored linear orders on \(\N\).

Given \({K,L} \in X_\mathsf{CLO}\) we say that \(K\) \emph{is embeddable into} \(L\)
(in symbols \(K\embeds_\mathsf{CLO} L\)) if there exists \(f\colon \N \to \N\) such that:
\begin{enumerate}
\item
\(m <_{K} n\) implies \(f(m) <_{L} f(n)\) for every \(m,n\in \N\);
\item
\( c_{L}(f(n)) = c_{K}(n)\) for every \(n\in \N\).
\end{enumerate}
Notice that by anti-reflexivity and linearity of strict orders, condition (1) implies that \( f \) is injective and that \(m <_{K} n \iff f(m) <_{L} f(n)\) for every \(m,n\in \N\).
Thus, broadly speaking, a colored linear order embeds into another one if there is an order-and-color preserving injection from the former into the latter.
It is clear by the definition that \(\embeds_\mathsf{CLO}\)  is a \(\analytic\) quasi-order on \( X_\mathsf{CLO}\).

The following result was observed by Louveau (see Marcone and Rosendal~\cite[Theorem~3.2]{MarRos}).

\begin{thm}
\label{theorem : MarRos}
The embeddability relation on colored linear orders \(\embeds_\mathsf{CLO}\) is a complete \(\analytic\) quasi-order.
\end{thm}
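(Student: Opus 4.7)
The plan is to establish the completeness of \(\embeds_\mathsf{CLO}\) by exhibiting a Borel reduction from some known complete \(\analytic\) quasi-order to it. A convenient starting point is the embeddability relation on countable trees on \(\N\), which is well-known to be complete \(\analytic\) (and is in fact among the first examples of such a relation, established essentially by Louveau and Rosendal via a direct encoding: given any \(\analytic\) quasi-order \(Q\) on a Polish space, one associates to each point \(x\) a tree of finite approximations witnessing its \(Q\)-class). Thus, modulo this input, the task reduces to constructing a Borel map \(T \mapsto L_T\) from countable trees to colored linear orders with the property that \(T\) embeds into \(T'\) (as trees) if and only if \(L_T \embeds_\mathsf{CLO} L_{T'}\).

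The map \(T \mapsto L_T\) is obtained by a depth-first linearization of \(T\). Each node \(s \in T\) is replaced by a finite colored \emph{gadget} \(G_s\) whose internal color pattern uniquely encodes the sequence \(s \in \N^{<\N}\); the gadgets corresponding to the children of \(s\) are concatenated (in the order given by \(\N\)) between a pair of dedicated colored \emph{separators} attached to \(G_s\), and the whole structure is laid out linearly following the depth-first visit of \(T\). The color palette is partitioned so that some colors appear only inside gadgets (encoding the coordinates of \(s\)), some colors appear only as separators opening and closing subtrees, and one distinguished color serves as a unique marker for the start of every gadget. The forward direction \(T \embeds T' \implies L_T \embeds_\mathsf{CLO} L_{T'}\) is then immediate from the construction: an embedding of trees canonically induces an order-and-color-preserving injection between the associated linearizations.

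The nontrivial direction consists in showing that any colored order embedding \(f \colon L_T \hookrightarrow L_{T'}\) must send gadgets to gadgets and respect the nested separator structure, so that \(f\) descends to an embedding \(T \hookrightarrow T'\). This is enforced by designing the gadgets to be \emph{rigid}: the specific pattern of colors in \(G_s\) cannot be realized as a sub-pattern of any \(G_t\) with \(t \neq s\) inside \(L_{T'}\), and the opening/closing separators have distinct colors that cannot be matched across different levels of the subtree hierarchy. Consequently, if \(f\) maps the gadget \(G_s\) of \(L_T\) into \(L_{T'}\), the rigidity forces the image to sit inside a gadget \(G_{s'}\) with \(s = s'\) up to the tree-coordinate data, and the separator colors propagate the parent-child relation, yielding the desired embedding of \(T\) into \(T'\).

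The main obstacle, as usual in this kind of encoding, lies in the careful design of the gadgets and separators so as to eliminate all spurious embeddings arising from the combinatorial flexibility of colored linear orders. Concretely, one must ensure that no proper sub-pattern of a gadget can be mistaken for a whole gadget and that the separator colors truly isolate sibling blocks; this is typically handled by introducing short colored \emph{signposts} with unique color signatures at the boundary of each relevant block. Once this combinatorial rigidity is secured, the Borel measurability of \(T \mapsto L_T\) is manifest from the recursive construction, and we conclude that embeddability on countable trees, hence every \(\analytic\) quasi-order, Borel-reduces to \(\embeds_\mathsf{CLO}\).
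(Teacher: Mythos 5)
The paper does not actually prove this statement: it is quoted from Marcone--Rosendal \cite[Theorem~3.2]{MarRos}, where it is attributed to Louveau. So there is no in-paper argument to compare yours against; judged on its own terms, your text is a strategy outline rather than a proof, and the steps it defers are exactly where the content of the theorem lies.

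The central gap is that the gadgets are never constructed, and the properties you demand of them are in tension with one another. If the color pattern of \(G_s\) ``uniquely encodes the sequence \(s\)'' and ``cannot be realized as a sub-pattern of any \(G_t\) with \(t\neq s\)'', then a colored-order embedding can only match \(G_s\) with \(G_s\), and your map reduces \emph{inclusion} of subtrees of \(\N^{<\N}\) rather than embeddability: a tree embedding that moves nodes induces no embedding \(L_T\embeds_\mathsf{CLO} L_{T'}\), so the forward direction fails (and literal inclusion is a closed, hence Borel, quasi-order, useless for completeness). If instead the gadgets are flexible enough that \(G_s\) can map into \(G_t\) whenever some tree embedding sends \(s\) to \(t\), the rigidity invoked in the backward direction evaporates and must be replaced by an actual argument. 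A second, independent problem is that an order embedding need not send a convex block to a convex set: the image of a gadget can be scattered among many gadgets and separators of \(L_{T'}\), and since your ``marker'' color occurs at the start of \emph{every} gadget and the coordinate colors recur throughout, nothing stated prevents a gadget pattern from being realized across several target gadgets; ruling this out is the hard part, and ``introducing short colored signposts'' is not an argument. Smaller issues: a depth-first traversal of an infinite, ill-founded tree does not visit all nodes (you would need, say, the lexicographic or Kleene--Brouwer order, and then re-examine whether separators still isolate sibling blocks); and the source relation ``embeddability of countable trees'' must be pinned down (trees as graphs? as partial orders? Louveau--Rosendal's \(\le_{\max}\) on normal trees?) with a correct citation --- your parenthetical account of why it is complete is not an accurate description of the Louveau--Rosendal argument. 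For comparison, the proof in \cite{MarRos} reduces the known complete quasi-order on normal trees and obtains the needed rigidity from order-theoretic properties of a carefully chosen underlying order type and coloring, not from finite rigid gadgets.
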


The proof of Theorem~\ref{theorem : ODAG} follows from Theorem~\ref{theorem : MarRos} and the following proposition.

\begin{prop}\label{prop : CLO and ODAG}
There is a Borel reduction from \(\embeds_\mathsf{CLO}\) to the embeddability relation \(\embeds_\mathsf{ODAG}\) on the space of ordered divisible Abelian groups.
\end{prop}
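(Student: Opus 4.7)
The idea is to encode a colored linear order \(L\in X_\mathsf{CLO}\) as a Hahn-style lexicographic sum in which the ``stalk'' at each point is a divisible Archimedean group chosen to code the color. By Remark~\ref{rk : incomparable}, we may fix once and for all a sequence \((H_i)_{i\in\N}\) of nontrivial countable divisible Archimedean subgroups of \(\R\) that are pairwise incomparable under \(\embeds_\mathsf{ArGp}\). Given \(L=(<_L,c_L)\in X_\mathsf{CLO}\), set
\[
G_L \coloneqq \bigoplus_{n\in\N} H_{c_L(n)},
\]
the group of finitely supported sequences \((a_n)_{n\in\N}\) with \(a_n\in H_{c_L(n)}\), equipped with pointwise addition and with the \emph{lex order induced by} \(<_L\): declare \((a_n)>_{G_L} 0\) iff \(a_{n^*}>0\) in \(H_{c_L(n^*)}\), where \(n^*\) is the \(<_L\)-maximum of the support of \((a_n)\). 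Then \(G_L\) is an ordered divisible Abelian group, and the assignment \(L\mapsto G_L\) is readily Borel after a suitable coding of \(G_L\) as a structure on \(\N\).

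\textbf{Forward direction.} Suppose \(\phi\colon \N\to\N\) witnesses \(K\embeds_\mathsf{CLO} L\). Define \(\Phi\colon G_K\to G_L\) by \(\Phi((a_n)_{n})=(b_m)_m\) with \(b_{\phi(n)}=a_n\) for \(n\) in the support of \((a_n)\) and \(b_m=0\) otherwise. This is well-defined because \(c_L(\phi(n))=c_K(n)\), it is clearly a group homomorphism, and since \(\phi\) preserves the linear order, the \(<_L\)-maximum of the image of a support is the image of its \(<_K\)-maximum, so positivity is preserved. Hence \(\Phi\) is an ordered group embedding, and \(G_K\embeds_\mathsf{ODAG} G_L\).

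\textbf{Reverse direction — the main point.} Assume \(\Psi\colon G_K\hookrightarrow G_L\) is an ordered group embedding. For any ordered Abelian group \(G\), the relation ``\(x\) and \(y\) lie in the same Archimedean class'', i.e.\ \(\exists m,n\in\N^+ (|x|<m|y|\wedge|y|<n|x|)\), is preserved and reflected by embeddings. By design, the set of Archimedean classes of \(G_L\setminus\{0\}\) (ordered by domination) is canonically order isomorphic to \(L\), the class of an element being indexed by the \(<_L\)-maximum of its support; similarly for \(G_K\) and \(K\). Consequently \(\Psi\) induces an order embedding \(\bar\Psi\colon K\to L\) of linear orders. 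To check color preservation, for \(n\in K\) let \(C^{\leq n}_K\subseteq G_K\) be the convex subgroup of elements whose support is \(\leq_K n\) and let \(C^{<n}_K\) be the analogous subgroup with strictly smaller supports; define \(C^{\leq n'}_L\) and \(C^{<n'}_L\) for \(n'=\bar\Psi(n)\in L\) in the same way. Since \(\Psi\) preserves Archimedean domination we have \(\Psi(C^{\leq n}_K)\subseteq C^{\leq n'}_L\) and \(\Psi(C^{<n}_K)\subseteq C^{<n'}_L\), so \(\Psi\) induces an ordered group homomorphism between the quotients
\[
C^{\leq n}_K/C^{<n}_K\;\cong\;H_{c_K(n)}\quad\longrightarrow\quad C^{\leq n'}_L/C^{<n'}_L\;\cong\;H_{c_L(n')}.
\]
This induced map is injective (a nonzero element of \(H_{c_K(n)}\) lifts to an element of Archimedean class exactly \(n\), and so its image has Archimedean class exactly \(n'\), in particular it is not in \(C^{<n'}_L\)), and is order preserving. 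Hence \(H_{c_K(n)}\embeds_\mathsf{ArGp} H_{c_L(\bar\Psi(n))}\), and the pairwise incomparability of the \(H_i\) forces \(c_K(n)=c_L(\bar\Psi(n))\). Thus \(\bar\Psi\) witnesses \(K\embeds_\mathsf{CLO} L\).

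\textbf{Main obstacle.} The delicate point is the reverse direction: a general ordered group embedding \(\Psi\colon G_K\to G_L\) need not send stalks to stalks in any literal sense, so the argument must be carried out purely at the level of Archimedean classes and the associated convex subgroups, together with a careful verification that the induced maps on the rank and on each stalk quotient are nonzero and order preserving; this is exactly what forces the use of \emph{pairwise incomparable} \(H_i\) (rather than, say, merely nonisomorphic ones) from Remark~\ref{rk : incomparable}.
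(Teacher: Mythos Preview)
Your proposal is correct and follows essentially the same approach as the paper: the same Hahn-style lexicographic sum construction with stalks chosen from a family of pairwise \(\embeds\)-incomparable divisible Archimedean groups (invoking Remark~\ref{rk : incomparable}), the same forward map along the color-preserving embedding, and the same reverse argument via Archimedean classes and the convex-subgroup quotients \(C^{\leq n}/C^{<n}\cong H_{c(n)}\). Your write-up is slightly more explicit in justifying that the induced map on stalk quotients is nonzero and order preserving, but otherwise the two proofs are the same.
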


\begin{proof}
Let \(\set{H_{n}}{n\in \N}\) be a family of pairwise non-embeddable countable divisible (Archimedean) subgroups of \(\R\). (See Remark~\ref{rk : incomparable}.)
For every \(L=(<_{L}, c_L)\in X_\mathsf{CLO}\), we construct the ordered divisible Abelian group \(G_{L}\)
 as the group of finite support functions \(f\) with domain \( \N \) and such that \(f(n)\in H_{c(n)}\). 

More precisely, \(G_L\) is the subgroup of \(\prod_{n \in L} H_{c_L(n)}\) (where the group operation is computed componentwise) consisting of those \(f\) whose support
\[
S_f = \set{n\in L}{ f(n) \neq 0}
\]
is finite. For later use, when \(f \in G_L\) we set \(v(f) = \max_{<_L} S_f\).
Further, let \(<_{G_L}\) be the reverse lexicographic order on \(G_{L}\). That is, for every distinct \(f, g \in G_{L}\), set \(f <_{G_L} g\) if and only if \(f(m) <_{H_m} g(m)\) where \(m = \max_{<_L} \set{n \in \N }{f(n)\neq g(n) }\). (Clearly, \( m \in S_{f} \cup S_{g} \), and thus it is well defined.)

For every positive \(f, g \in G_L\) we set \(f\preceq_L g\) if and only if there exists some \(n \in \N\) such that \(f \leq ng \), and \(f \approx_L g\) if \(f\preceq_L g\) and \(g\preceq_L f\).
Notice that \(f \preceq_L g\) if and only if \(v(f) \leq_{L} v(g)\). The \( \approx_L \)-equivalence classes are exactly the maximal Archimedean subgroups of \( G_L \), and if \( H \) is an Archimedean subgroup of \( G_L \) then \( f \approx_L g \) for all positive \( f,g \in H \), so that \(  v(f) = v(g) \).

The map \(L\mapsto G_L\) is easily seen to be Borel, and we are going to show that it is also a reduction from \(\embeds_\mathsf{CLO}\) to \(\embeds_\mathsf{ODAG}\). 

If \(j\colon \N\to \N\) is a color preserving embedding between
\(K=(<_K,c_K)\) and \(L=(<_L, c_L)\), then \(H_{c_L(j(n))}= H_{c_K(n)}\)
for every \(n\in \N\).
Thus, the map sending \(f\in G_K\) to the element \(g\in G_L\) defined by
\[
g(m) = 
\begin{cases}
f(n) & \text{if \(m = j(n)\) for some \(n\in\N\)}\\
0 & \text{otherwise}
\end{cases}
\]
is well-defined (as by construction \(S_g = j(S_f)\) is finite) and is clearly an order preserving group embedding from \(G_K\) to \(G_L\).

\begin{comment}
induces an order preserving homomorphism \(G_{K} \to G_{L}\) taking each \(f\in G_{K}\) to the function \(g\in G_{L}\) such that \(S_g= j(S_{f})\) and \(g\colon j(n) \mapsto f(n)\) for every \(n\in S_{f}\).
\end{comment}

Conversely, suppose that \(\phi\colon G_{K} \to G_{L}\) is an an order preserving homomorphism, and notice that \( \phi \) reduces \( \approx_K \) to \( \approx_L \), that is \( f \approx_K g \iff \phi(f) \approx_L \phi(g) \) for all \( f,g \in G_K \).
Given \(n \in \N\), consider any positive \(f \in G_K\) with \(S_f = \{n\}\) and set 
\[
\phi^*(n) = v(\phi(f)).
\]
Notice that the definition of \(\phi^*(n)\) does not depend on the choice of \(f\). In fact, \( f \approx_K g \) for any positive \(g \in G_K\) such that \(S_f = S_g = \{n\}\), so that \(\phi(f) \approx_L \phi(g)\), and thus \(v(\phi(f)) = v(\phi(g))\), as desired.

It is easy to show that \(m <_{K} n \iff \phi^{*}(m) <_{L} \phi^{*}(n)\). Indeed, if \( f,g \in G_K \) were used to define \( \phi^*(m) \) and \( \phi^*(n) \), then
\begin{align*}
m <_K n & \iff f <_{G_K} g \text{ and } f \not\approx_K g \\
& \iff \phi(f) <_{G_L} \phi(g) \text{ and } \phi(f) \not\approx_L \phi(g) \\
&  \iff v(\phi(f)) <_L v(\phi(g)) \\
& \iff \phi^{*}(m) <_{L} \phi^{*}(n).
\end{align*}
Next, for every \(n \in \N \) define 
\[
G_K(n) = \set{g\in G_K}{v(g)\leq_K n}\quad\text{and}\quad N_K(n) = \set{g\in G_K}{v(g)<_Kn}.
\] 
It is easily checked that \(N_K(n)\) is a convex subgroup of \(G_K(n)\), thus 
the quotient \(G_K(n) / N_K(n)\) is an ordered group, which is isomorphic to \(H_{c_K(n)}\). Since \(\phi\) is an ordered group embedding, \(\phi\) reduces \( \approx_K \) to \( \approx_L \), and thus it induces an embedding from \(H_{c_K(n)}\) into  \(H_{c_L(\phi^*(n))}\). It follows that \(c_{K}(n) =c_{L}(\phi^*(n))\) for every \(n\), hence  \(\phi^*\colon K \to L\) witnesses \( L \embeds_{\mathsf{CLO}} K \), as desired.
\end{proof}

Another well-known example of o-minimal theory is the one of real closed fields.
Recall that the theory of \emph{real closed fields} (\textsf{RCF}) is the theory in
the language \(\{ + , \cdot , 0 , 1 , < \}\), axiomatized by the ordered field axioms, an axiom saying that every positive element has a square root, and for every odd number \(d\) and every polynomial  \(P(x)\) of degree \(d\), an axiom saying that  \(P(x)\) has a root.

The following corollary is a consequence of Theorem~\ref{theorem : ODAG}.

\begin{cor}\label{corollary : RCF}
The embeddability relation \( \embeds_{\mathsf{RCF}} \) on countable reals closed fields is a complete \( \analytic \) quasi-order. Consequently, the associated
 bi-embeddability relation \(\equiv_{\mathsf{RCF}}\)  is a complete \( \analytic \) equivalence relation.
\end{cor}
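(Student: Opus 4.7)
The plan is to Borel-reduce \(\embeds_\mathsf{ODAG}\) to \(\embeds_\mathsf{RCF}\) by means of a Hahn-series construction; the Corollary will then follow from Theorem~\ref{theorem : ODAG}, together with the general fact recalled before Theorem~\ref{theorem : MarRos} that the bi-embeddability relation induced by a complete \(\analytic\) quasi-order is a complete \(\analytic\) equivalence relation. Given a countable ordered divisible Abelian group \(G\), consider the Hahn series field \(\R((t^G))\), which is real closed because \(G\) is divisible and \(\R\) is real closed. Inside it, let \(K_G\) be the real closure of the countable ordered subfield \(\Q(t^g : g \in G)\) generated over \(\Q\) by the formal monomials \(t^g\). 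Then \(K_G\) is a countable real closed field, and with a standard coding the map \(G \mapsto K_G\) is Borel. Because \(G\) is already divisible, the canonical valuation \(v \colon \R((t^G))^\ast \to G\) restricts to a surjective valuation \(K_G^\ast \to G\): algebraic extensions of a valued field enlarge the value group at most to its divisible hull, which here equals \(G\) itself.

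For the forward direction, an order preserving group embedding \(\phi \colon G \to H\) lifts to an ordered ring embedding \(\Q(t^g : g \in G) \to \Q(t^h : h \in H)\) via \(t^g \mapsto t^{\phi(g)}\), and this extends uniquely to an embedding \(K_G \to K_H\) of their real closures. For the converse, suppose \(\psi \colon K_G \to K_H\) is a field embedding, which is automatically order preserving since in a real closed field positivity coincides with being a nonzero square. Define \(\psi_\ast \colon G \to H\) by \(\psi_\ast(g) = v(\psi(t^g))\). Then \(\psi_\ast\) is a group homomorphism because \(\psi\) is multiplicative and \(v\) is a valuation; it is injective because \(\psi\) preserves archimedean classes of positive elements, and in each of \(K_G\) and \(K_H\) these archimedean classes are in bijection with the value group (a consequence of the residue field being real closed); and it is order preserving because \(g > 0\) iff \(0 < t^g < 1/n\) for every \(n \in \N\) iff \(0 < \psi(t^g) < 1/n\) for every \(n\) (using that \(\psi\) fixes \(\Q\)) iff \(\psi_\ast(g) > 0\) in \(H\). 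Thus \(\psi_\ast\) witnesses \(G \embeds_\mathsf{ODAG} H\), completing the reduction.

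The main technical point will be verifying the two valuation-theoretic claims underlying the argument: that the value group of \(K_G\) is exactly \(G\) (not some proper extension), and that on a valued field with real closed residue field, archimedean equivalence of positive elements coincides with having the same valuation. Both follow from standard facts, once one observes that the residue field of \(K_G\) is the field of real algebraic numbers (hence real closed) and that \(G\), being divisible, equals its own divisible hull. Granting these points, \(G \mapsto K_G\) is a Borel reduction from \(\embeds_\mathsf{ODAG}\) to \(\embeds_\mathsf{RCF}\), and both halves of the Corollary follow.
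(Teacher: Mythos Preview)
Your proposal is correct and follows essentially the same route as the paper: both build a Borel reduction from \(\embeds_\mathsf{ODAG}\) to \(\embeds_\mathsf{RCF}\) by assigning to a countable divisible ordered Abelian group \(G\) the real closure, inside a Hahn series field, of the subfield generated by the monomials \(t^g\). The paper works over an arbitrary countable real closed \(\mathbf{k}\subset\R\) rather than \(\Q\), and for the converse direction simply appeals to the fact that an embedding of real closed fields induces one on the natural value groups; your version unpacks this via archimedean classes, which is a legitimate and slightly more explicit way of saying the same thing.
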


\begin{proof}
First recall the following well-known construction.
Suppose that \(\mathbf{k}\subset \R\) is a countable real closed field. For every ordered divisible Abelian group \(G\), let \(\mathbf{k}((t^G))\) be the field of \emph{Hahn series}, that is, 
the field of formal series
 \(f=\sum_{g\in G} a_{g}t^{g}\)
 with \(a_{g}\in \mathbf{k}\), and  well-ordered support \(S_{f}\coloneqq \{g \in G \mid a_{g}\neq 0\}\).  
Addition and multiplication are defined as suggested by the series notation: for example, 
\(t^{g}t^{h}= t^{g+h}\) for all \(g,h\in G\), where addition is computed in \(G\).
We can clearly regard  \(\boldsymbol{k}\) as a subfield of  \(\mathbf{k}((t^G))\) via the identification
\( a = a t^{0}\),
for every \(a \in \boldsymbol{k}\).
Finally, consider the valuation \(v(f) = \min { S_{f} } \) and the ordering obtained by setting \(0 < f\) if and only if \(f(v(f)) > 0\). Then \(\mathbf{k}((t^G))\) is a real closed valued field with value group \(G\) and residue field \(\mathbf{k}\).

We now construct a Borel reduction from \( \embeds_{\mathsf{ODAG}} \) to \( \embeds_{\mathsf{RCF}} \); in view of Theorem~\ref{theorem : ODAG} this gives the desired result.
Fix a countable real closed field \(\mathbf{k}\subset \R\).
For every countable ordered divisible Abelian group \( G \) and every  \(g \in G\), set \( f_g = t^g \), i.e.\ the formal series \(f_{g} \in \mathbf{k}((t^G))\) with coefficients
\[
a_{h} = \begin{cases}
1 & h =g,\\
0 & \text{otherwise}
\end{cases}
\]
for all \( h \in G \). Then define the real closed field \(R_{G}\) as the real closure of the algebraic field extension \(\mathop{\boldsymbol{k}} (f_{g} : g\in G)\).
If \(\phi\colon G\to H\) is an embedding, then we can define an embedding from \(R_{G}\) into \(R_{H}\) sending
\(t^{g}\) to \(t^{\phi(g)}\) for all \(g\in G\) and then extending such map in the obvious way.
Conversely, since any embedding between real closed fields induces an embedding between the corresponding value groups, if \(R_{G}\) embeds into \( R_{H}\) then there is an order preserving homomorphism \(G \to H\).
\end{proof}

Theorem~\ref{theorem : ODAG} and Corollary~\ref{corollary : RCF} give two examples of an o-minimal theory \(T\) admitting nonsimple types  and such that the corresponding relations of embeddability \( \embeds_T \) and bi-embeddability \(\equiv_T\) on countable models of \( T \) both have maximal complexity with respect to Borel reducibility.
Nevertheless, we cannot extend such result to all such theories.

\begin{ex}
Let \(T\) be the theory of \((\mathbb{Q}, <, s)\) where \(s\) is the unary function \(x\mapsto x+1\). 
The countable models of \(T\) are exactly the structures
  \[M_L=(L\times \Q, \prec_L, s_L),\] where \(L\) is a countable linear order, \(\prec_L\) is the lexicographic order on \(L\times \mathbb{Q}\), and \(s_L\colon (l,q)\mapsto (l,q+1)\).  Moreover, \(M_L\) is embeddable into
\(M_{K}\) if and only if \(L\) is embeddable into \(K\), therefore \( {\embeds_{T}} \sim_B {\embeds_{\mathsf{LO}}} \) and \( \embeds_T \) is a well-quasi order. Thus \( {\equiv_{T} } \sim_B {\equiv_\mathsf{LO}}\), and hence \( \equiv_T \) is not a complete \(\analytic\) equivalence relation.
\end{ex}

\begin{question}
For which o-minimal theories is  the bi-embeddability relation \(\equiv_T\) complete \(\analytic\)?
\end{question}

\end{document}